\theoremstyle{plain}
\newtheorem{theorem}{Theorem}
\newtheorem{lemma}[theorem]{Lemma}
\newtheorem{proposition}[theorem]{Proposition}
\newtheorem{corollary}[theorem]{Corollary}
\newtheorem*{claim*}{Claim}
\theoremstyle{definition}
\newtheorem{definition}[theorem]{Definition}
\newtheorem{example}[theorem]{Example}
\newtheorem{remark}[theorem]{Remark}
\numberwithin{theorem}{section}
\numberwithin{equation}{section}
\newcommand{\fakeenv}{} 
\newenvironment{restate}[2]  
{ 
 \renewcommand{\fakeenv}{#2} 
 \theoremstyle{plain} 
 \newtheorem*{\fakeenv}{#1~\ref{#2}} 
 \begin{\fakeenv}
}
{
 \end{\fakeenv}
}
\newcommand{\CC}{\mathbb{C}} 
\newcommand{\FF}{\mathbb{F}}
\newcommand{\NN}{\mathbb{N}} 
\newcommand{\RR}{\mathbb{R}}
\newcommand{\QQ}{\mathbb{Q}}
\newcommand{\ZZ}{\mathbb{Z}}
\newcommand{\calC}{\mathcal{C}}
\newcommand{\calH}{\mathcal{H}}
\newcommand{\calN}{\mathcal{N}}
\newcommand{\frakE}{\mathfrak{E}}
\newcommand{\frakF}{\mathfrak{F}}
\newcommand{\frakK}{\mathfrak{K}}
\newcommand{\sfo}{\mathsf{o}}
\newcommand{\sft}{\mathsf{t}}
\newcommand{\sfE}{\mathsf{E}}
\newcommand{\sfV}{\mathsf{V}}
\newcommand{\bp}{\mathbf{p}}  
\newcommand{\bq}{\mathbf{q}}
\newcommand{\hGamma}{\Gamma'}
\newcommand{\tgamma}{{\widetilde \gamma}}
\newcommand{\tGamma}{{\widetilde \Gamma}}
\newcommand{\tf}{{\tilde f}}
\newcommand{\tast}{{\tilde *}}
\newcommand{\tH}{{\widetilde H}}
\newcommand{\tX}{{\widetilde X}}
\newcommand{\abs}[1]{\left\lvert {#1} \right\rvert}
\newcommand{\norm}[1]{\left\| {#1} \right\|} 
\newcommand{\I}[1]{\langle #1 \rangle}
\newcommand{\bd}{\partial}
\newcommand{\from}{\colon\thinspace}
\newcommand{\inv}{^{-1}}
\newcommand{\detG}[1][1]{%
\ifthenelse{\equal{#1}{1}}%
{\operatorname{det}_{G}}%
{\operatorname{det}_{#1}}%
}
\newcommand{\param}%
	{{\mathchoice{\mkern1mu\mbox{\raise2.2pt\hbox{$\centerdot$}}\mkern1mu}%
	{\mkern1mu\mbox{\raise2.2pt\hbox{$\centerdot$}}\mkern1mu}%
	{\mkern1.5mu\centerdot\mkern1.5mu}{\mkern1.5mu\centerdot\mkern1.5mu}}}
\newcommand{\PF}{\mathcal{EG}}
\DeclareMathOperator{\Aut}{Aut}
\DeclareMathOperator{\Mat}{Mat}
\DeclareMathOperator{\Out}{Out}
\DeclareMathOperator{\clos}{clos}
\DeclareMathOperator{\id}{id}
\DeclareMathOperator{\img}{im}
\DeclareMathOperator{\sign}{sgn}
\DeclareMathOperator{\supp}{supp}
\DeclareMathOperator{\stab}{stab}
\DeclareMathOperator{\Tr}{tr}
\begin{document}


\title[Chain flaring and $L^{2}$--torsion of free-by-cyclic groups]{Chain flaring and $L^{2}$--torsion of free-by-cyclic groups}

\author[M.~Clay]{Matt Clay}
\address{Dept.\ of Mathematics \\
University of Arkansas\\
Fayetteville, AR 72701}
\email{\href{mailto:mattclay@uark.edu}{mattclay@uark.edu}}

\begin{abstract}
We introduce a condition on the monodromy of a free-by-cyclic group, $G_\phi$, called the \emph{chain flare condition}, that implies that the $L^2$--torsion, $\rho^{(2)}(G_\phi)$, is non-zero.  We conjecture that this condition holds whenever the monodromy is exponentially growing.
\end{abstract}

\maketitle


\section{Introduction}

The $L^2$--torsion, denoted $\rho^{(2)}(G)$, is an analytical group invariant that is well-defined for a large class of $L^2$--acyclic groups, i.e., a group whose $L^2$--homology vanishes.  (For the remainder, when we speak of the $L^2$--torsion of an $L^2$--acyclic group $G$, we implicitly assume that $G$ is in this class, which conjecturally includes all $L^2$--acyclic groups; see~\cite[Section~13]{bk:Luck02} and \cite[Section~13]{ar:Luck16}.) This invariant is a real number and it behaves similarly to Euler characteristic in the sense that it is multiplicative under covers and that there exists a sum formula along pushouts.  If $G$ is $L^2$--acyclic and $G$ contains an elementary amenable normal subgroup, then it was shown by Wegner that $\rho^{(2)}(G)  = 0$~\cite{col:Wegner00}, see also~\cite[Theorem~3.113]{bk:Luck02}.  In addition, there are a variety of conjectures and partial results relating the $L^2$--torsion of a group to the growth of torsion in homology; see the survey articles by L\"uck~\cite[Section~7]{ar:Luck16} and \cite[Section~3.6]{un:Luck21}.          

In the setting of the fundamental groups of 3--manifolds, the $L^2$--torsion was computed by L\"uck--Schick~\cite{ar:LS99}.  We recall their results in the context of an orientable 3--manifold that fibers over $S^1$ as this parallels the setting considered in this paper.  Assuming for simplicity that the fiber is connected, such a 3--manifold is homeomorphic to a mapping torus:
\begin{equation*}
M_f = \raisebox{5pt}{$\Sigma \times [0,1]$} \Big/ \raisebox{-5pt}{$(x,0) \sim (f(x),1)$},
\end{equation*}
where $f \from \Sigma \to \Sigma$ is a homeomorphism of an orientable connected surface $\Sigma$.  Given such a homeomorphism $f \from \Sigma \to \Sigma$, let $\calC$ be the canonical cut system for $f$ 
and assume for simplicity that each curve in $\calC$ is fixed up to homotopy by $f$.  For each component $\Sigma_s \subseteq \Sigma - \calC$, $s = 1,\ldots, S$, we have $f(\Sigma_s) = \Sigma_s$ and the restriction of $f$ to $\Sigma_s$ determines a sub-mapping torus $M_{f,s} \subseteq M_f$.  As $\calC$ is the canonical cut system for $f$, the restriction of $f$ to $\Sigma_s$ is, up to homotopy, either periodic or pseudo-Anosov.  When the restriction of $f$ to $\Sigma_s$ is pseudo-Anosov, Thurston proved that the manifold $M_{f,_s}$ admits a complete hyperbolic metric~\cite{ar:Thurston82}.  The work of L\"uck--Schick in this setting shows that $-\rho^{(2)}(\pi_1(M_f))$ equals $\frac{1}{6\pi}$ times the sum---over the indices $1 \leq s \leq S$ where the restriction of $f$ to $\Sigma_s$ is pseudo-Anosov---of the volumes of these hyperbolic sub-mapping tori $M_{f,s}$.  In particular, the $L^2$--torsion is determined by the exponential dynamics of $f$.  Combining this with work of Gromov~\cite{ar:Gromov82}, Soma~\cite{ar:Soma81} and Thurston~\cite{un:Thurston78}, this implies that the $L^2$--torsion $-\rho^{(2)}(\pi_1(M_f))$ is proportional to the simplicial volume $\norm{M_f}$.  Similarly, combining this with work of Pieroni~\cite{un:Pieroni} this also implies that the $L^2$--torsion is proportional to the cube of the minimal volume entropy $\omega(M_f)^3$.

Building onto the established research by Algom-Kfir--Hironaka--Rafi~\cite{ar:A-KHR15}, Dowdall--Kapovich--Leininger~\cite{ar:DKL15,ar:DKL17,ar:DKL17-2}, Funke--Kielak~\cite{ar:FK18} and others of studying free-by-cyclic groups analogously to 3--manifolds that fiber over $S^1$, the aim of this paper is to study the $L^2$--torsion of a free-by-cyclic group, in particular, trying to understand when this invariant is non-zero.  A \emph{free-by-cyclic group} is a group that fits into a short exact sequence:
\begin{equation*}
1 \to \FF \to G \to \ZZ \to 1
\end{equation*}
where $\FF$ is a finitely generated free group and hence it admits a presentation as a semi-direct product:
\begin{equation*}
G = \FF\rtimes_\Phi \I{t} = \I{\FF,t \mid t\inv x t = \Phi(x) \mbox{ for } x \in \FF }
\end{equation*}
where $\Phi \in \Aut(\FF)$.  Changing the automorphism $\Phi$ within its outer automorphism  class amounts to replacing the generator $t$ by $tx$ for some $x \in \FF$ and so we are justified in denoting the above defined group by $G_{\phi}$ where $\phi = [\Phi] \in \Out(\FF)$.

Previously, building off of work by L\"uck~\cite[Section~7.4]{bk:Luck02}, the author showed how to compute $-\rho^{(2)}(G_\phi)$ using a topological representative $f \from \Gamma \to \Gamma$ of $\phi \in \Out(\FF)$ \cite{ar:Clay17-2}.  Similar to the setting of 3--manifolds that fiber over $S^1$ mentioned above, it was shown that $-\rho^{(2)}(G_\phi)$ can be expressed using a topological representative $f \from \Gamma \to \Gamma$ for $\phi \in \Out(\FF)$.  In this context, $\Gamma$ is a graph, $f$ is a homotopy equivalence and there is filtration $\emptyset = \Gamma_{0} \subset \Gamma_{1} \subset \cdots \subset \Gamma_S = \Gamma$ by subgraphs that is respected by $f$ in the sense that $f(\Gamma_s) \subseteq \Gamma_s$ for each $s = 1,\ldots, S$.  For each $1 \leq s \leq S$, there is a non-negative integer matrix $M(f)_s$ that records the number of times the image of an edge in $\Gamma_{s} - \Gamma_{s-1}$ crosses an edge in $\Gamma_{s} - \Gamma_{s-1}$.  Enlarging the filtration if necessary, we can assume that each matrix $M(f)_s$ is either the zero matrix or it is irreducible.  With this set-up $-\rho^{(2)}(G_\phi)$ is expressed as a sum over the the indices $1 \leq s \leq S$  where $M(f)_s$ is irreducible and has Perron--Frobenius eigenvalue strictly greater than 1; this subset of indices is denoted $\PF(f)$.  Each term in the summation is the logarithm of the Fuglede--Kadison determinant of an operator associated to the restriction of $f$ to the subgraph $\Gamma_s$.  Hence, as mentioned previously in the setting of 3--manifolds that fiber over $S^1$, the $L^2$--torsion of a free-by-cyclic group is determined by the exponential dynamics of the monodromy $f$.  While the simplicial volume of a free-by-cyclic group is not well-defined, the minimal volume entropy is.  It was recently shown by Bregman and the author that the $L^2$--torsion $-\rho^{(2)}(G_\phi)$ is not proportional to the square of the minimal volume entropy $\omega(G_\phi)^2$ in general~\cite{un:BC}.

L\"uck has shown that $-\rho^{(2)}(G_\phi)$ is non-negative for any free-by-cyclic group~\cite[Theorem~7.29]{bk:Luck02}.  The main result in the author's previous work \cite{ar:Clay17-2} provides an upper bound on $-\rho^{(2)}(G_\phi)$ in terms of the data previously described, namely the matrices $M(f)_s$.   In particular, it was shown that $-\rho^{(2)}(G_\phi) = 0$ when $\phi$ is polynomially growing, i.e., $\PF(f) = \emptyset$.  In this paper, we suggest a strategy to show that $-\rho^{(2)}(G_\phi) > 0$ whenever $\phi$ is exponentially growing, i.e., $\PF(f) \neq \emptyset$.  To this end, we introduce a condition, called the chain flare condition, and prve that this implies $-\rho^{(2)}(G_\phi) > 0$.    

\begin{theorem}\label{th:mahler}
Suppose that $f \from \Gamma \to \Gamma$ is a homotopy equivalence that respects the reduced filtration $\emptyset = \Gamma_0 \subset \Gamma_1 \subset \cdots \subset \Gamma_S = \Gamma$ and that $f \from \Gamma \to \Gamma$ represents the outer automorphism $\phi \in \Out(\FF)$.  If the restriction of $f$ to $\hGamma_s$ satisfies the chain flare condition relative to $\hGamma_s \cap \Gamma_{s-1}$ for each $s \in \PF(f)$, then
\begin{equation}\label{eq:mahler}
-\rho^{(2)}(G_\phi) = \sum_{s \in \PF(f)} \int_{1 < \abs{z}} \log \abs{z} d\mu_{L_{f,s}}.
\end{equation}
Moreover, each integral in~\eqref{eq:mahler} is positive and hence $-\rho^{(2)}(G_\phi) > 0$.
\end{theorem}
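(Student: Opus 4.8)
The plan is to combine the computation of $-\rho^{(2)}(G_\phi)$ from a topological representative, established in \cite{ar:Clay17-2}, with a new analysis of the Fuglede--Kadison determinants appearing there, made possible by the chain flare condition. Because the filtration is reduced and $f$--invariant, \cite{ar:Clay17-2} expresses $-\rho^{(2)}(G_\phi)$ as a sum, over $s \in \PF(f)$, of $\log \det_{\calN(G_\phi)}$ of the monodromy operator $\id - t^{-1}A_s$ acting on $\ell^2(G_\phi)^{k_s}$, where $A_s \in M_{k_s}(\ZZ[\FF])$ records the $f$--images of the edges of $\hGamma_s$ relative to $\hGamma_s \cap \Gamma_{s-1}$ and augments to the transition matrix $M(f)_s$ under $\ZZ[\FF] \to \ZZ$; the strata outside $\PF(f)$ contribute nothing, as in the polynomially growing case. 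The upper bound of \cite{ar:Clay17-2}, phrased in terms of the matrices $M(f)_s$, amounts to the inequality $\log \det_{\calN(G_\phi)}(\id - t^{-1}A_s) \le \int_{1 < \abs{z}} \log\abs{z}\, d\mu_{L_{f,s}}$, the right-hand side being the integral against the eigenvalue--counting measure of the linearized monodromy $L_{f,s}$ built from $M(f)_s$. So \eqref{eq:mahler} reduces to proving the reverse inequality for each $s \in \PF(f)$; the strict positivity of each integral will be verified separately.

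For the reverse inequality the idea is to show that the chain flare condition never lets the cancellation in the $\ZZ[\FF]$--matrix $A_s$ become severe enough to drop $\det_{\calN(G_\phi)}(\id - t^{-1}A_s)$ below the ``abelianized'' value $\exp\bigl(\int_{1 < \abs{z}} \log\abs{z}\, d\mu_{L_{f,s}}\bigr)$. Writing $\det_{\calN(G_\phi)}(\id - t^{-1}A_s) = \exp\bigl(\tfrac12 \int_0^\infty \log\lambda\, d\mu_{\Delta_s}(\lambda)\bigr)$ with $\Delta_s = (\id - t^{-1}A_s)^*(\id - t^{-1}A_s)$ (the integral finite, these groups being $L^2$--acyclic and of determinant class), a lower bound for the determinant is a quantitative statement that the spectral measure $\mu_{\Delta_s}$ does not concentrate near $0$, i.e.\ that $\id - t^{-1}A_s$ is uniformly injective on the subspaces on which $M(f)_s$ already is. This is exactly the point at which flaring enters: a vector $c$ with $\norm{(\id - t^{-1}A_s)c}$ small is an approximate $1$--eigenvector of the twisted iterates $(t^{-1}A_s)^n = t^{-n}A_s^{(n)}$, so $\norm{c} \approx \norm{A_s^{(n)}c}$ for all $n$, whereas the flare condition, applied to the chain in $\hGamma_s$ underlying $c$ (relative to $\hGamma_s \cap \Gamma_{s-1}$), gives a two--sided expansion bound $\max\bigl(\norm{A_s^{(n)}c}, \norm{A_s^{(-n)}c}\bigr) \ge C^{-1}\lambda^{n}\norm{c}$ with $\lambda > 1$ for all large $n$, which is incompatible with near--stationarity. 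The main obstacle is to make this dichotomy effective and, above all, \emph{sharp}: good enough to recover precisely the abelianized value, not merely some positive lower bound. I expect to carry this out by approximating $\det_{\calN(G_\phi)}$ in the fibered setting --- for instance along finite cyclic covers of the mapping torus of $f$, where the monodromy operators become genuine finite matrices whose eigenvalue distributions converge weakly to $\mu_{L_{f,s}}$ --- and feeding in the two--sided flare estimates to control the limit without loss; together with the upper bound this yields \eqref{eq:mahler}.

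Finally, each integral in \eqref{eq:mahler} is positive. For $s \in \PF(f)$ the matrix $M(f)_s$ is irreducible with Perron--Frobenius eigenvalue $\lambda_s > 1$, so the measure $\mu_{L_{f,s}}$ assigns positive mass to $\{\, \abs{z} \ge \lambda_s \,\}$ and hence $\int_{1 < \abs{z}} \log\abs{z}\, d\mu_{L_{f,s}} \ge \log\lambda_s > 0$; more robustly, the chain flare condition itself guarantees that $\mu_{L_{f,s}}$ charges the complement of the closed unit disk, where the integrand is strictly positive. Consequently $-\rho^{(2)}(G_\phi)$, being a sum of strictly positive terms indexed by $\PF(f)$, is positive whenever $\phi$ is exponentially growing (equivalently $\PF(f) \neq \emptyset$); when $\PF(f) = \emptyset$ both sides of \eqref{eq:mahler} vanish.
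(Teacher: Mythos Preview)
Your proposal has several genuine gaps, and the overall strategy does not match what the paper actually does.

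First, you misidentify the measure $\mu_{L_{f,s}}$.  It is the Brown measure of the Hilbert--$G_\phi$--module morphism $L_{f,s}\from L^2(G_\phi)^{n_s}\to L^2(G_\phi)^{n_s}$, not an eigenvalue-counting measure ``built from $M(f)_s$.''  There is no a~priori relationship between the spectrum or Brown measure of $L_{f,s}$ and the Perron--Frobenius eigenvalue $\lambda(f)_s$ of the integer matrix $M(f)_s$.  In particular, the upper bound of \cite{ar:Clay17-2} is not the inequality you write; it bounds $-\rho^{(2)}(G_\phi)$ by $\sum_s n_s\log\lambda(f)_s$, which is a quite different quantity.  So you cannot reduce \eqref{eq:mahler} to a reverse inequality as you propose.

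Second, your plan for the reverse inequality is only a heuristic.  The observation that an approximate $1$--eigenvector of $t^{-1}A_s$ has bounded iterates, while chain flaring gives exponential growth, is exactly the right intuition, but you concede that ``the main obstacle is to make this dichotomy effective and, above all, sharp,'' and then write ``I expect to carry this out'' without doing so.  The step you hope for --- passing to finite cyclic covers where ``the monodromy operators become genuine finite matrices'' --- is false: $G_{\phi^k}$ is still infinite for every $k$, so $L_{f^k,s}$ is not a finite matrix, and there is no weak convergence of finite eigenvalue measures to $\mu_{L_{f,s}}$ available.

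Third, your positivity argument conflates the two kinds of spectral data.  From $\lambda(f)_s>1$ one cannot conclude that $\mu_{L_{f,s}}$ charges $\{\abs{z}\ge\lambda_s\}$.

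The paper takes a different route that uses the cyclic-cover idea correctly.  From Theorem~\ref{th:torsion powers} and Theorem~\ref{th:det-splitting} one gets, for every $k\ge 1$,
\[
-\rho^{(2)}(G_\phi)=\frac{1}{k}\sum_{s\in\PF(f)}\log\detG[G_\phi]\bigl(I-L_{f,s}^k\bigr)
=\frac{1}{k}\sum_{s\in\PF(f)}\int_{\CC}\log\abs{1-z^k}\,d\mu_{L_{f,s}},
\]
the second equality being the Brown-measure identity (Theorem~\ref{th:brown}\eqref{brown integral}).  One then takes $k\to\infty$.  Off an annulus $\nu^{-1}\le\abs{z}\le\nu$ the integrand $\tfrac{1}{k}\log\abs{1-z^k}$ converges uniformly (to $0$ inside the unit disk and to $\log\abs{z}$ outside), so dominated convergence handles those regions.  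The whole content of the chain flare condition is to control the annulus: via Haagerup--Schultz invariant subspaces (Theorem~\ref{th:subspaces}) one identifies $\frakK_\nu=\frakE(L_{f,s},\nu)\cap\frakF(L_{f,s},\nu^{-1})$ with $W_{\rm qf}^{(2)}$ (Theorem~\ref{th:unit circle}), and one computes directly that $\detG[G_\phi](I-L_{f,s}^k)\big|_{W_{\rm qf}^{(2)}}=1$ because the restriction is conjugate to right multiplication by $1-t^k$ on $L^2(\langle t\rangle)$ (Theorem~\ref{th:determinant on quasi-fixed}).  Hence the annulus contributes $0$ for every $k$, and the limit gives \eqref{eq:mahler} as an equality, not as a pair of matching bounds.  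Positivity then comes from $\log\detG[G_{\phi,s}](L_{f,s})\ge 0$ together with the dimension count $\mu_{L_{f,s}}(\CC)=n_s\ge 2$ while $\mu_{L_{f,s}}(\{\abs{z}=1\})=\dim_{G_{\phi,s}}W_{\rm qf}^{(2)}\le 1$, which forces the Brown measure off the unit circle and makes the integral strictly positive.
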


In Theorem~\ref{th:mahler}, the filtration $\emptyset = \Gamma_0 \subset \Gamma_1 \subset \cdots \subset \Gamma_S = \Gamma$ being reduced means that there is a single component $\Gamma'_s \subseteq \Gamma_s$ that is not contained in $\Gamma_{s-1}$.  The measures $\mu_{L_{f,s}}$ appearing in \eqref{eq:mahler} are the Brown measures~\cite{col:Brown86} associated to the operators $L_{f,s} \from L^2(G_\phi)^{n_s} \to L^2(G_\phi)^{n_s}$ ($n_s$ is the number of edges in $\Gamma_s - \Gamma_{s-1}$).  These operators are described fully in Section~\ref{subsec:compute}, but briefly, they are induced from the vertical flow in the universal cover of the mapping torus for $f$.

The chain flare condition is the linear analog of the annuli flare condition of Bestvina--Feighn~\cite{ar:BF92} that has been successfully employed by Bestvina--Feighn--Handel~\cite{ar:BFH97} and Brinkmann~\cite{ar:Brinkmann00} to prove hyperbolicity of certain free-by-cyclic groups, and more generally by Kapovich~\cite{ar:Kapovich00} and Mutanguha~\cite{ar:Mutanguha20} to prove hyperbolicity of certain ascending HNN-extensions over free groups.  Full details regarding the chain flare condition will be given in Section~\ref{sec:cfh}, but we provide a quick explanation here.  We can lift the topological representative $f \from \Gamma \to \Gamma$  of $\phi \in \Out(\FF)$ to a cellular map $\tf \from \tGamma \to \tGamma$ where $\tGamma$ is the universal cover of the graph $\Gamma$.  This map is not $\FF$--equivariant, but satisfies $\tf(gz) = \Phi_{f}\tf(z)$ where $g \in \FF$, $z \in \tGamma$ and $\Phi_{f} \in \Aut(\FF)$ represents $\phi$.  The map $\tf$ induces a map on the level of cellular 1--chains of $\tGamma$, which we denote by $A_f \from C_1(\tGamma;\QQ) \to C_1(\tGamma;\QQ)$.  In spirit, the chain flare condition asserts the existence of a constant $\lambda > 1$ such that for any 1--chain $x \in C_1(\tGamma;\QQ)$ we have:
\begin{equation*}
\lambda{\norm{A_f(x)}} \leq \max\left\{ \norm{A_f^2(x)}, \norm{x}\right\}
\end{equation*}  
---where $\norm{\param}$ is the usual $L^2$--norm---unless the 1--chain $x$ has an obvious reason why it should not satisfy this inequality, e.g., $x$ is fixed by $A_f$.  The previously mentioned work of Bestvina--Feighn--Handel~\cite{ar:BFH97} and Brinkmann~\cite{ar:Brinkmann00} implies that the chain flare condition always holds when the support of the boundary of $x$ consists of two points, i.e., $x$ is the 1--chain determined by an edge-path in $\tGamma$.

Before we give an outline of the proof of Theorem~\ref{th:mahler} and the rest of the paper, we mention some related results.  For a free-by-cyclic group $\FF \rtimes_{\Phi_f} \I{t}$, the $L^2$--torsion is the logarithm of the Fuglede--Kadison determinant of the operator given by right multiplication by $I - tJ_1(f)$ where $J_1(f)$ is a matrix with entries in the group ring $\ZZ[\FF]$, see Section~\ref{subsec:compute}.  Deninger has computed the Fuglede--Kadison determinant of similar operators for the discrete Heisenberg group~\cite{ar:Deninger11}.  Specifically, the discrete Heisenberg groups can be expressed as a semi-direct product $\ZZ^2 \rtimes_{\Phi} \I{t}$ where $t$ acts on $\ZZ^2$ via the matrix $\left[\begin{smallmatrix}1 & 1 \\ 0 & 1 \end{smallmatrix}\right]$.  Deninger studies operators given by right multiplication by $1 - xt$ where $x \in \CC[\ZZ^2]$ and expresses the logarithm of the Fuglede--Kadison determinant as an integral of $\log\abs{x}$, treating $x$ as a polynomial in two variables~\cite[Theorem~11]{ar:Deninger11}.  Funke--Kielak study a different variant of the $L^2$--torsion of free-by-cyclic groups, called the $L^2$--torsion polytope \cite{ar:FK18}, see also the later work by Kielak~\cite{ar:Kielak19}.  Together, these works show that the BNS invariant of the free-by-cyclic group is determined by the $L^2$--torsion polytope.  The connection between their work and the present work is the universal $L^2$--torsion defined by Friedl--L\"uck~\cite{ar:FL17}.  This is a certain element in the weak Whitehead group $\rho^{(2)}_u(G) \in {\rm Wh}^w (G)$ associated to the $L^2$--acyclic group $G$.  The logarithm of the Fuglede--Kadison determinant gives a homomorphism ${\rm Wh}^w(G) \to \RR$; the $L^2$--torsion $\rho^{(2)}(G)$ is the image of $\rho^{(2)}_u(G)$.  There is another homomorphism defined on ${\rm Wh}^w(G)$ by Friedl--L\"uck~\cite{ar:FL17} whose image is a polytope; the $L^2$--torsion polytope is the image of $\rho^{(2)}_u(G)$.


\subsection{Outline of the proof}\label{subsec:outline}

The proof of Theorem~\ref{th:mahler} builds off of the author's previous work~\cite{ar:Clay17-2}.  To simplify the exposition in this introductory section, we will assume that the filtration consists of a single stratum, i.e., that $S = 1$.  For notational simplicity, we will denote the operator $L_{f,1}$ simply by $L$ and $n_1$ simply by $n$ (the number of edges in $\Gamma$).  Using the property that the $L^2$--torsion is multiplicative under covers and properties of the Brown measure, the proof of Theorem~\ref{th:mahler} starts by showing that for any $k \geq 1$, we can express the $L^2$--torsion as a certain integral over $\CC$:
\begin{equation*}
-\rho^{(2)}(G_\phi) = \frac{1}{k} \int_\CC \log \abs{1-z^k} \, d\mu_{L}.
\end{equation*}    
We would like to take the limit as $k \to \infty$.  Notice that $\log\abs{1-z^k}^{1/k} \to 0$ as $k \to \infty$ when $\abs{z} < 1$ and that $\log\abs{1-z^k}^{1/k} \to \log \abs{z}$ as $k \to \infty$ when $1 < \abs{z}$.  On the unit circle, the limit does not exist.  In order to apply the Lebesgue dominated convergence theorem, we need to bound the integrand and this results in bounding away from the unit circle.  To this end, for $\nu > 1$, we separate the above integral into integrals over three regions: (i) $\abs{z} < \nu\inv$, (ii) $\nu\inv \leq \abs{z} \leq \nu$, and (iii) $\nu < \abs{z}$.  The integral over the first region thus limits to 0 and the integral over the third region limits to the integral of $\log \abs{z}$ over $\nu < \abs{z}$.  It is the integral over the region $\nu\inv \leq \abs{z} \leq \nu$ that requires further investigation.  We desire to show that this integral is equal to 0.   Using the work of Haagerup--Schultz~\cite{ar:HS09} on the invariant subspace problem, this integral is equal to $\frac{1}{k}$ times the logarithm of the Fuglede--Kadison determinant of the operator $I - L^k$ restricted to a certain invariant subspace $\frakK_\nu \subseteq L^2(G_\phi)^n$, see Theorem~\ref{th:subspaces}.  This is where the chain flare condition comes into play.  Under the chain flare condition we can identify this subspace for $\nu$ sufficiently close to and greater than 1.  There are two cases:
\begin{enumerate}

\item The subspace $\frakK_\nu$ is the trivial subspace.  In the language of the chain flare condition, this happens when there is no Nielsen 1--chain so that the quasi-fixed submodule $V_{\rm qf}$ is trivial.  By definition, the 0 morphism has Fuglede--Kadison determinant equal to 1 and so the integral over the region $\nu\inv \leq \abs{z} \leq \nu$ is 0 as desired.

\item The subspace $\frakK_\nu$ is isomorphic to $L^2(G_\phi)$ and the restriction of $I - L^k$ to this subspace is induced from the operator given by right multiplication by $1 - t^k$ on the subgroup $\I{t} \subset G_\phi$.  In the language of the chain flare condition, this happens when there is a Nielsen 1--chain that generates the quasi-fixed submodule $V_{\rm qf}$.  Hence, by properties of the Fuglede--Kadison determinant, the determinant of the restriction of $I - L^k$ to $\frakK_\nu$ equals the determinant of the operator given by right multiplication by $1 - t^k$ on $L^2(\I{t})$.  This operator has determinant equal to 1 and again so the integral over the region $\nu\inv \leq \abs{z} \leq \nu$ is 0 as desired.    

\end{enumerate}
Since this holds for all $\nu > 1$, we conclude that:
\begin{equation*}\label{eq:outline}
-\rho^{(2)}(G_\phi) = \int_{1 < \abs{z}} \log \abs{z} \, d\mu_L
\end{equation*} 
as claimed.

To complete the proof of Theorem~\ref{th:mahler}, we must show that the integral in the above equation is positive.   Using the properties of the Brown measure and the operator $L$, we show in proof of Theorem~\ref{th:mahler} that:
\begin{equation*}
0 \leq \int_{\abs{z} < 1} \log \abs{z} \, d\mu_L + \int_{1 < \abs{z}} \log \abs{z} \, d\mu_L
\end{equation*}
As the first integral is non-positive, the second integral is non-negative and as we can show that the support of the measure $d\mu_L$ is not contained in the unit circle, we conclude that the second integral is in fact positive.


\subsection{Organization of paper}\label{subsec:organization}

This paper is organized as follows.  In Section~\ref{sec:cfh} we introduce the concepts and notation necessary to state the chain flare condition, which is formally stated in Section~\ref{subsec:statement}.  Sections~\ref{subsec:von neumann} and \ref{subsec:torsion} define the notion of the Fuglede--Kadison determinant and the $L^2$--torsion, especially in the context of free-by-cyclic groups.  The author's previous work on computing this invariant using a topological representative, in particular the definition of the operators in Theorem~\ref{th:mahler} is recalled in Section~\ref{subsec:compute}.  The Brown measure associated to Hilbert--$G$--module morphism $A \from U \to U$ is introduced in Section~\ref{subsec:brown} and its relation to the Haagerup--Schultz invariant subspaces is explained in Section~\ref{subsec:HS}.  The work on using the chain flare condition to understand the invariant subspace $\frakK_\nu$ is initiated in Section~\ref{sec:dynamics quasi-fixed} where we explore the dynamics on quasi-fixed submodule $V_{\rm qf}$ mentioned in Section~\ref{subsec:outline}.  The are two cases, depending on whether the Nielsen 1--chain is non-geometric (Section~\ref{subsec:non-geometric}) or geometric (Section~\ref{subsec:geometric}).  In Sections~\ref{sec:isolating} and \ref{sec:restriction}, we identify the subspace $\frakK_\nu$ as explained in Section~\ref{subsec:outline} and compute the Fuglede--Kadison determinant of the restriction of the operator $I - L_{f,s}$ to this subspace.  The proof of Theorem~\ref{th:mahler} takes place in Section~\ref{sec:integral}.  We conclude in Section~\ref{sec:apply} with some final remarks regarding the chain flare condition and on applying the methods within to ascending HNN-extensions $\FF \ast_\Psi$.




\section{The chain flare condition}\label{sec:cfh}

The goal of this section is to give the complete statement of the chain flare condition.  There are several technicalities that are necessary to derive a statement that works for a general topological representative $f \from \Gamma \to \Gamma$ and that takes into account invariant subgraphs.  On a first read, the reader is invited in Sections~\ref{subsec:relative 1-chains} and \ref{subsec:statement} to assume that the graph $\Gamma$ is a rose and that the invariant subgraph $H \subset \Gamma$ is a single vertex.  In this case, all of the $\QQ[\FF]$--modules defined in Section~\ref{subsec:relative 1-chains} are equal and isomorphic to $\QQ[\FF]^n$ ($n$ is the rank of $\FF$) and the homomorphism $A_{f,H}$ is an isomorphism of this free module.  


\subsection{Graphs and morphisms}\label{subsec:graphs morphisms}

A \emph{graph} is a 1--dimensional $CW$--complex.  If $\Gamma$ is a graph, by $\sfV(\Gamma)$ we denote the set of \emph{vertices} (0--cells) and by $\sfE(\Gamma)$ we denote the set of \emph{edges} (1--cells).  As 1--cells, edges are oriented; the initial vertex of an edge $e \in \sfE(\Gamma)$ is denoted $\sfo(e)$ and the terminal vertex is denoted $\sft(e)$.  The same edge with opposite orientation is denoted by $\bar{e}$.  

An \emph{edge-path} is the image of a continuous map $\bp \from [0,1] \to \Gamma$ for which there exists a partition $0 = x_0 < x_1 < \cdots < x_m = 1$ such that $\bp|_{[x_{k-1},x_k]}$ is homeomorphism onto an edge of $\Gamma$.  When there is no ambiguity, we will define an edge-path by listing the vertices it visits and write $\bp\from p_0,\ldots,p_m$ where $p_k = \bp(x_k)$ or by listing the edges it visits.  

For a graph $\Gamma$, a \emph{morphism} $f \from \Gamma \to \Gamma$ is a cellular map that linearly expands each edge of $\Gamma$ across an edge-path in $\Gamma$ (with respect to some metric).  Fixing an enumeration of edges of $\Gamma$, $\sfE(\Gamma) = \{e_1,\ldots,e_n\}$, the \emph{transition matrix $M(f)$} is the $n \times n$ matrix where $m_{i,j}$ equals the number of occurrences of $e_j$ or $\bar{e}_j$ in the edge-path $f(e_i)$.  

The morphism $f \from \Gamma \to \Gamma$ \emph{respects} a filtration of $\Gamma$ by subgraphs $\emptyset = \Gamma_0 \subset \Gamma_1 \subset \cdots \subset \Gamma_S = \Gamma$ if $f(\Gamma_s) \subseteq \Gamma_s$ for all $1 \leq s \leq S$.  In this case, the transition matrix can be assumed to have a lower block triangular form.  Indeed this happens so long as edges lower in the filtration are ordered first, i.e, $e_i \in \sfE(\Gamma_s)$ and $e_j \notin \sfE(\Gamma_s)$ implies that $i < j$.  Let $i_s$ denote the smallest index with $e_{i_s} \notin \sfE(\Gamma_{s-1})$, let $n_s = \#\abs{\sfE(\Gamma_s) - \sfE(\Gamma_{s-1})}$ and let $M(f)_s$ denote the $n_s \times n_s$ submatrix of $M(f)$ with $i_s \leq i,j \leq i_s + n_s - 1$.  Then $M(f)$ is lower block triangular with the submatrices $M(f)_s$ along the diagonal.

Unless otherwise stated, we will always assume that such a filtration $\emptyset = \Gamma_0 \subset \Gamma_1 \subset \cdots \subset \Gamma_S = \Gamma$ is maximal in the sense that $M(f)_s$ is either the zero matrix or irreducible for each $1 \leq s \leq S$.  For each $1 \leq s \leq S$ where $M(f)_s$ is irreducible, we let $\lambda(f)_s$ denote the associated Perron--Frobenius eigenvalue.  We set $\PF(f) = \{ s \mid M(f)_s \text{ is irreducible and } \lambda(f)_s > 1 \}$.  We say the filtration $\emptyset = \Gamma_0 \subset \Gamma_1 \subset \cdots \subset \Gamma_S = \Gamma$ is \emph{reduced} if for each $1 \leq s \leq S$ there is exactly one component $\hGamma_s \subseteq \Gamma_s$ that is not a component of $\Gamma_{s-1}$.  (This definition is not the standard usage of the term reduced in the context of filtrations---cf.~\cite{ar:BFH00,ar:FH11,ar:HM20}, but our definition is easily implied by the standard usage and our definition is what we use in the sequel.)

We say a homotopy equivalence $f \from \Gamma \to \Gamma$ \emph{represents} an outer automorphism $\phi \in \Out(\FF)$ if there is a vertex $\ast \in \sfV(\Gamma)$, an identification $\pi_1(\Gamma,*) \cong \FF$ and an edge-path from $*$ to $f(*)$ such that the outer automorphism induced by $f$ and this edge-path is $\phi$.  Unless otherwise noted, all maps of graphs in the sequel are morphisms.   


\subsection{Relative 1--chains}\label{subsec:relative 1-chains}
Suppose $f \from \Gamma \to \Gamma$ is a homotopy equivalence that fixes a vertex $* \in \sfV(\Gamma)$ and that $H \subset \Gamma$ is an $f$--invariant subgraph.  Fixing an isomorphism $\pi_1(\Gamma,*) \cong \FF$ we have that $f$ and the trivial path based at $*$ induces an automorphism of $\FF$ that we denote by $\Phi_{f}$.  

Let $\tGamma$ be the universal cover of $\Gamma$ and let $\tH$ be the union of the lifts of $H$ to $\tGamma$.  Fix a lift $\tast$ of $*$ to $\tGamma$ and let $\tf$ be the lift of $f$ such that $\tf(\tast) = \tast$.  This map satisfies $\tf(gz) = \Phi_f(g)\tf(z)$ for any point $z \in \tGamma$ and any element $g \in \FF$.  The set of rational (cellular) 1--chains, $C_1(\tGamma;\QQ)$, is a $\QQ[\FF]$--module isomorphic to $\QQ[\FF]^n$, where $n$ is the number of edges in $\Gamma$.  We express $1$--chains as formal linear combinations of the edges in $\tGamma$ and write $x = \sum_{e \in \sfE(\tGamma)} x_e e$ where $x_e \in \QQ$ and $x_e \neq 0$ for only finitely many $e \in \sfE(\Gamma)$.  The \emph{support} of a 1--chain $x \in C_1(\tGamma;\QQ)$ is defined by $\supp(x) = \{ e \in \sfE(\tGamma) \mid x_e \neq 0 \}$.  Note that changing the orientation on $e$ swaps the sign of the corresponding coefficient.  The map $\tf$ induces an abelian group homomorphism $A_f \from C_1(\tGamma;\QQ) \to C_1(\tGamma;\QQ)$ that satisfies $A_f(gx) = \Phi_f(g)A_f(x)$ for any 1--chain $x \in C_1(\tGamma;\QQ)$ and any element $g \in \FF$.  

Similarly, we also consider the of rational (cellular) 0--chains $C_0(\tGamma;\QQ)$ and the usual boundary map $\bd_1 \from C_1(\tGamma;\QQ) \to C_0(\tGamma;\QQ)$ defined on edges $\bd_1 e = \sft(e) - \sfo(e)$.  

Given distinct vertices $u_1,u_2 \in \sfV(\tGamma)$, by $[u_1,u_2]$ we denote the 1--chain in $C_1(\tGamma;\QQ)$ uniquely determined by:
\begin{equation*}
\bd_1[u_1,u_2]_v = \begin{cases}
-1 & \mbox{ if } v = u_1 \\
1 & \mbox{ if } v = u_2 \\ 
0 & \mbox{ else}.
\end{cases}
\end{equation*}
In particular, $[u_1,u_2]_e = \pm 1$ for any edge in the edge-path from $u_1$ to $u_2$ and $[u_1,u_2]_e = 0$ for all other edges.

We consider the following $\QQ[\FF]$--submodule of rational 1--chains in $\tGamma$ relative to $\tH$:
\begin{align*}
C_1(\tGamma,\tH;\QQ) &= \{ x \in C_1(\tGamma;\QQ) \mid x_e = 0\ \forall e \in \sfE(\tH) \}.
\end{align*}
We observe that $C_1(\tGamma;\QQ) = C_1(\tH;\QQ) \oplus C_1(\tGamma,\tH;\QQ)$.  By $\pi_H$ and $\pi_H^\perp$ respectively we denote the projections of $C_1(\tGamma;\QQ)$ onto $C_1(\tH;\QQ)$ and $C_1(\tGamma;\tH;\QQ)$ respectively.  The following homomorphism is central to the chain flare condition:
\begin{equation*}
A_{f,H} = \pi_H^\perp \circ A_f\big|_{C_1(\tGamma,\tH;\QQ)} \from C_1(\tGamma,\tH;\QQ) \to C_1(\tGamma,\tH;\QQ).
\end{equation*}  


\subsection{Nielsen 1--chains}\label{subsec:nielsen 1-chain}

As stated in the Introduction, in essence, the chain flare condition states that the norm of a relative 1--chain in $C_1(\tGamma,\tH;\QQ)$ should grow by a definite factor after applying $A_{f,H}$ or else it is the image of a relative 1--chain whose norm is a larger by a definite factor.  However, there are certain 1--chains that are fixed by $A_{f,H}$ that need to be accounted for.  This is the motivation for the definition of a Nielsen 1--chain.

\begin{definition}\label{def:nielsen 1-chain}
Let $\rho \in C_1(\tGamma,\tH;\QQ)$ be a relative 1--chain such that $\rho = \pi_H^\perp([u,v])$ for some vertices $u,v \in \sfV(\tGamma)$ that are fixed by $\tf$, i.e., $\tf(u) = u$ and $\tf(v)= v$.  We say $\rho$ is a \emph{non-geometric Nielsen 1--chain} if it satisfies the following condition.
\begin{enumerate}[label=(NNC{{\arabic*}}),leftmargin=2cm]
\item\label{nnc:single} There is an edge $e \in \sfE(\tGamma) - \sfE(\tH)$ such that $\rho_e = \pm 1$ and $\rho_{ge} = 0$ for any non-trivial element $g \in \FF$.
\end{enumerate}
We say $\rho$ is a \emph{geometric Nielsen 1--chain} if it satisfies the following conditions.
\begin{enumerate}[label=(GNC{{\arabic*}}),leftmargin=2cm]
\item\label{gnc:single} For distinct elements $g_1,g_2 \in \FF$, the intersection $\supp(g_1\rho) \cap \supp(g_2\rho)$ is either empty or consists of a single edge.

\item\label{gnc:pair} For all edges $e \in \sfE(\tGamma) - \sfE(\tH)$, there are exactly two elements $g_1,g_2 \in \FF$ such that $e$ is the unique edge in the intersection $\supp(g_1\rho) \cap \supp(g_2\rho)$.

\item\label{gnc:noncommuting} There exists non-commuting elements $g_1,g_2 \in \FF$ such that the intersections $\supp(\rho) \cap \supp(g_1\rho)$ and $\supp(\rho)\cap \supp(g_2\rho)$ are non-empty.

\end{enumerate}
We say $\rho$ is a \emph{Nielsen 1--chain} if it is either a non-geometric or a geometric Nielsen 1--chain.
\end{definition}

We observe that $A_{f,H}(\rho) = \rho$.  In Section~\ref{sec:apply} we explain how Nielsen 1--chains naturally arise for EG strata in a CT map.


\subsection{The chain flare condition}\label{subsec:statement}

We require the following notation before we state the chain flare condition.  We consider the usual $L^2$--inner product and $L^2$--norm on 1--chains.  That is, given a 1--chains $x = \sum_{e \in \sfE(\tGamma)} x_e e$ and $x'  = \sum_{e \in \sfE(\tGamma)} x'_e e$ we set:
\begin{equation*}
\I{x,x'} = \sum_{e \in \sfE(\tGamma)} x_e x'_e \mbox{ and } \norm{x}^2 = \I{x,x} = \sum_{e \in \sfE(\tGamma)} \abs{x_e}^2.
\end{equation*}  

If $V \subseteq C_1(\tGamma;\tH;\QQ)$ is a $\QQ[\FF]$--submodule and $0 < \theta < 1$, we set:
\begin{equation*}
N_\theta(V) = \left\{ x' \in C_1(\tGamma,\tH;\QQ) \mid \I{x,x'} > \theta \norm{x}\norm{x'} \mbox{for some } x \in V \right\}.
\end{equation*}
Thus, $N_\theta(V)$ consists of elements that make a small angle with an element of $V$.  Notice that $N_\theta(V) - \{0\}$ is a neighborhood of $V - \{0\}$.

If $N \subseteq C_1(\tGamma,\tH;\QQ)$ is a subset, we define the following subset: 
\begin{equation*}
N^{\infty} = \bigcup_{k \in \ZZ} A_{f,H}^{k}(N).
\end{equation*}
In other words, $N^{\infty}$ consists of all relative 1--chains $x'$ such that either $x' = A_{f,H}^k(x)$ for some $x \in V$ and some $k \geq 0$ or that $A^k_{f,H}(x') \in V$ for some $k \geq 0$.  We remark that $N^{\infty}$ is $A_{f,H}$--invariant.

We can now formally state the chain flare condition.  

\medskip 

\noindent {\bf Chain Flare Condition.} Suppose $f \from \Gamma \to \Gamma$ is a homotopy equivalence and $H \subset \Gamma$ is an $f$--invariant subgraph.  We say \emph{$f$ satisfies the chain flare condition relative to $H$} if there are $\QQ[\FF]$--submodules $V_{\rm h}, V_{\rm qf} \subseteq C_1(\tGamma, \tH; \QQ)$ where the following conditions hold.
\begin{enumerate}[label=(CFH{{\arabic*}}),leftmargin=2cm]
\item\label{cfh:1} $C_1(\tGamma,\tH;\QQ) = V_{\rm h} + V_{\rm qf}$.

\item\label{cfh:2} There exists constants $\lambda > 1$ and $0 < \theta < 1$ such that for all $x \in N_\theta(V_{\rm h})^{\infty}$:
\begin{equation*}
\lambda\norm{A_{f,H}(x)} \leq \max\left\{\norm{A^2_{f,H}(x)},\norm{x}\right\}.
\end{equation*}

\item\label{cfh:3} If $V_{\rm qf} \neq \{0\}$, then there exists a Nielsen 1--chain $\rho \in C_1(\tGamma,\tH;\QQ)$ such that for all $x \in V_{\rm qf}$, there exist rational numbers $q_1,\ldots,q_r \in \QQ$ and elements $g_1,\ldots,g_r \in \FF$ such that $x = q_1g_1\rho + \cdots + q_rg_r\rho$.

\end{enumerate}

\medskip 

We call $V_{\rm h}$ the \emph{hyperbolic submodule} and $V_{\rm qf}$ the \emph{quasi-fixed submodule}.  If $V_{\rm qf} \neq \{0\}$, we say the Nielsen 1--chain $\rho$ specified in \ref{cfh:3} \emph{generates} the submodule.  We remark that to verify \ref{cfh:2}, one may assume that the coefficients in $x$ are integral.  Further, if $V_{\rm qf} = \{0\}$, then it suffices to verify \ref{cfh:2} only for $x \in V_{\rm h}$ as $N_\theta(V_{\rm h})^\infty$ equals $V_{\rm h}$ in this case.  Moreover, if $C_1(\tGamma,\tH;\QQ) = V_{\rm h} \oplus V_{\rm qf}$ and $V_{\rm h}$ is $A_{f,H}$--invariant, then it suffices to verify \ref{cfh:2} only for $x \in V_{\rm h}$.  See Remark~\ref{rem:invariant}.

For use later on in Section~\ref{sec:isolating}, we record the following consequence of \ref{cfh:2}.

\begin{lemma}\label{lem:induction}
Suppose that the homotopy equivalence $f \from \Gamma \to \Gamma$ satisfies the chain flare condition relative to the $f$--invariant graph $H \subset \Gamma$ with constants $\lambda$ and $\theta$.  The following statements hold.

\begin{enumerate}
\item\label{induction down} If $x \in N_\theta(V_{\rm h})^{\infty}$, $j \geq 1$ and $\lambda\norm{A^j_{f,H}(x)} \leq \norm{A_{f,H}^{j-1}(x)}$, then $\lambda^j\norm{A^j_{f,H}(x)} \leq \norm{x}$.

\item\label{induction up} If $x \in N_\theta(V_{\rm h})^{\infty}$, $j \geq 1$ and $\lambda\norm{x} \leq \norm{A_{f,H}(x)}$, then $\lambda^j\norm{x} \leq \norm{A_{f,H}^j(x)}$.

\item\label{induction power} If $x \in N_\theta(V_{\rm h})^{\infty}$ and $N \geq 1$, then:
\begin{equation*}
\lambda^N\norm{A_{f,H}^N(x)} \leq \max\left\{\norm{A^{2N}_{f,H}(x)},\norm{x}\right\}.
\end{equation*}
\end{enumerate}
\end{lemma}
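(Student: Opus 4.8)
The plan is to derive all three statements from the single inequality in~\ref{cfh:2}, using crucially that $N_\theta(V_{\rm h})^\infty$ is $A_{f,H}$--invariant (so that~\ref{cfh:2} may be applied to any forward or backward iterate of $x$).

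For part~\eqref{induction down}, the idea is a downward induction on $j$. The base case $j=1$ is immediate. For the inductive step, suppose the claim holds for $j-1$ and that $\lambda\norm{A^j_{f,H}(x)} \leq \norm{A^{j-1}_{f,H}(x)}$. Apply~\ref{cfh:2} to the chain $y = A^{j-1}_{f,H}(x) \in N_\theta(V_{\rm h})^\infty$: we get $\lambda\norm{A^j_{f,H}(x)} = \lambda\norm{A_{f,H}(y)} \leq \max\{\norm{A^2_{f,H}(y)}, \norm{y}\} = \max\{\norm{A^{j+1}_{f,H}(x)}, \norm{A^{j-1}_{f,H}(x)}\}$. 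If the max is realized by the second term, then combining with the hypothesis we get $\lambda\norm{A^j_{f,H}(x)} \leq \norm{A^{j-1}_{f,H}(x)}$, i.e.\ the hypothesis of the lemma persists at level $j-1$, so the inductive hypothesis applied to the exponent $j-1$ yields $\lambda^{j-1}\norm{A^{j-1}_{f,H}(x)} \leq \norm{x}$, and then $\lambda^j\norm{A^j_{f,H}(x)} \leq \lambda^{j-1}\norm{A^{j-1}_{f,H}(x)} \leq \norm{x}$ as desired. If instead the max is realized by the first term, one sees $\norm{A^{j+1}_{f,H}(x)} \geq \lambda\norm{A^j_{f,H}(x)}$; the content of part~\eqref{induction up} (growth propagates forward) then shows the norms continue to grow, contradicting... wait, there is no contradiction to extract here directly, so the cleaner route is to phrase part~\eqref{induction down} itself as the induction and simply observe that the ``first term'' case cannot occur given the hypothesis: since $\lambda\norm{A^j_{f,H}(x)}\le \norm{A^{j-1}_{f,H}(x)}$, if also $\norm{A^{j+1}_{f,H}(x)}\ge \lambda\norm{A^j_{f,H}(x)}$ were the governing case, one still concludes $\lambda\norm{A^j_{f,H}(x)}\le\norm{A^{j-1}_{f,H}(x)}$ from the hypothesis alone, so in either subcase the inductive hypothesis at $j-1$ applies. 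This closes part~\eqref{induction down}.

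Part~\eqref{induction up} is an upward induction on $j$, and is the natural mirror image: the base case $j=1$ is the hypothesis. Assuming $\lambda^{j}\norm{x} \leq \norm{A^{j}_{f,H}(x)}$, I want $\lambda^{j+1}\norm{x} \leq \norm{A^{j+1}_{f,H}(x)}$. Apply~\ref{cfh:2} to $y = A^{j}_{f,H}(x)$ again: $\lambda\norm{A^j_{f,H}(x)} \leq \max\{\norm{A^{j+1}_{f,H}(x)}, \norm{A^{j-1}_{f,H}(x)}\}$. The goal is to rule out the second term as the maximum, and here one uses part~\eqref{induction down} contrapositively together with the standing growth hypothesis $\lambda\norm{x}\le\norm{A_{f,H}(x)}$: if $\lambda\norm{A^j_{f,H}(x)}\le\norm{A^{j-1}_{f,H}(x)}$ then part~\eqref{induction down} gives $\lambda^j\norm{A^j_{f,H}(x)}\le\norm{x}$, so $\norm{A^j_{f,H}(x)}\le \lambda^{-j}\norm{x}\le\norm{x}/\lambda$, which (combined with the inductive hypothesis $\lambda^j\norm{x}\le\norm{A^j_{f,H}(x)}$) forces $\lambda^{j+1}\norm{x}\le\norm{x}$, impossible unless $x=0$, in which case the statement is trivial. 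Hence $\norm{A^{j+1}_{f,H}(x)}$ is the maximum, giving $\lambda\norm{A^j_{f,H}(x)}\le\norm{A^{j+1}_{f,H}(x)}$, and multiplying by $\lambda^j$ and using the inductive hypothesis finishes the step.

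For part~\eqref{induction power}, the plan is to run the dichotomy from~\ref{cfh:2} starting at the middle iterate $A^N_{f,H}(x)$ and walk outward. Concretely, consider the sequence $a_i = \norm{A^i_{f,H}(x)}$ for $i$ near $N$. Either there is an index $i$ with $N \le i < 2N$ such that $\lambda a_{i+1} \le a_i$ — then part~\eqref{induction down} applied with the chain $A^N_{f,H}(x)$ and exponent $i+1-N$ (noting $\lambda a_{i+1}\le a_i$ is exactly its hypothesis, after shifting) propagates down to give $\lambda^{i+1-N} a_{i+1} \le a_N$, and since the relevant inequalities only help, one chases this to bound $a_N$ by $a_{2N}$ or $a_0$; or else $\lambda a_{i+1} > a_i$ for every such $i$, in which case~\eqref{induction up} gives $\lambda^{N} a_N \le a_{2N}$ directly. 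The bookkeeping is to show that in every branch the factor $\lambda^N$ is achieved against either $\norm{A^{2N}_{f,H}(x)}$ or $\norm{x}$; I expect this telescoping argument to be the main obstacle, since one must carefully track which of the two terms in each max wins and verify the exponents add up to exactly $N$ rather than something smaller. The cleanest formulation is likely: apply~\ref{cfh:2} to $A^N_{f,H}(x)$ to split into the case $\norm{A^{N+1}}\ge \lambda\norm{A^N}$ (then~\eqref{induction up} with base chain $A^N_{f,H}(x)$ and exponent $N$ gives $\lambda^N\norm{A^N}\le\norm{A^{2N}}$) versus the case $\norm{A^{N-1}}\ge\lambda\norm{A^N}$ (then~\eqref{induction down} with base chain $x$ and exponent $N$ gives $\lambda^N\norm{A^N}\le\norm{x}$), which covers everything since their disjunction is exactly~\ref{cfh:2} for $A^N_{f,H}(x)$.
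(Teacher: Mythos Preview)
Your plan for part~\eqref{induction power} is exactly the paper's argument: apply~\ref{cfh:2} at the middle iterate to obtain the dichotomy $\lambda\norm{A^N(x)} \le \max\{\norm{A^{N+1}(x)},\norm{A^{N-1}(x)}\}$, then feed each branch into~\eqref{induction up} (with base $A^N(x)$) or~\eqref{induction down} (with base $x$). Your final paragraph nails this cleanly; the earlier telescoping discussion is unnecessary. Part~\eqref{induction up} is also essentially correct, although you invoke~\eqref{induction down} to rule out the wrong branch, whereas the paper rules it out more directly: the hypothesis $\lambda\norm{x}\le\norm{A(x)}$ combined with $\lambda>1$ already forbids $\lambda\norm{A(x)}\le\norm{x}$, so~\ref{cfh:2} applied to $x$ immediately gives $\lambda\norm{A(x)}\le\norm{A^2(x)}$, and then one inducts on $A(x)$.

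There is a genuine gap in your argument for part~\eqref{induction down}. You apply~\ref{cfh:2} to $y=A^{j-1}(x)$, which yields
\[
\lambda\norm{A^j(x)} \le \max\bigl\{\norm{A^{j+1}(x)},\norm{A^{j-1}(x)}\bigr\},
\]
and then claim that in either case ``the inductive hypothesis at $j-1$ applies.'' But the inductive hypothesis for exponent $j-1$ requires the premise $\lambda\norm{A^{j-1}(x)}\le\norm{A^{j-2}(x)}$, and nothing you have written establishes this; you have only re-derived $\lambda\norm{A^j(x)}\le\norm{A^{j-1}(x)}$, which is the original premise at level $j$. The fix is to apply~\ref{cfh:2} one step lower, to $A^{j-2}(x)$, obtaining
\[
\lambda\norm{A^{j-1}(x)} \le \max\bigl\{\norm{A^{j}(x)},\norm{A^{j-2}(x)}\bigr\}.
\]
Now the assumption $\lambda\norm{A^j(x)}\le\norm{A^{j-1}(x)}$ together with $\lambda>1$ forces the max to be $\norm{A^{j-2}(x)}$, giving exactly $\lambda\norm{A^{j-1}(x)}\le\norm{A^{j-2}(x)}$. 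This is the premise for the inductive hypothesis at $j-1$, whence $\lambda^{j-1}\norm{A^{j-1}(x)}\le\norm{x}$, and combining with the original assumption yields $\lambda^j\norm{A^j(x)}\le\norm{x}$.
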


\begin{proof}
To simplify notation, we denote $A_{f,H}$ by $A$ in the proof.

We first prove \eqref{induction down} by induction.  The statement is tautological for $j = 1$.  Now suppose that $j \geq 2$, that \eqref{induction down} holds for $j-1$, that $x \in N_\theta(V_{\rm h})^\infty$ and that $\lambda\norm{A^j(x)} \leq \norm{A^{j-1}(x)}$.  Since $A^{j-2}(x) \in N_\theta(V_{\rm h})^{\infty}$, by \ref{cfh:2} we must have:
\begin{equation*}
\lambda\norm{A^{j-1}(x)} \leq \max\left\{\norm{A^j(x)},\norm{A^{j-2}(x)}\right\}.
\end{equation*}
As $\lambda\norm{A^j(x)} \leq \norm{A^{j-1}(x)}$ by assumption, we must have $\lambda\norm{A^{j-1}(x)} \leq \norm{A^{j-2}(x)}$ as $\lambda > 1$.  Hence, by induction $\lambda^{j-1}\norm{A^{j-1}(x)} \leq \norm{x}$.  Therefore:
\begin{equation*}
\lambda^j\norm{A^j(x)} = \lambda^{j-1}\left(\lambda\norm{A^j(x)}\right)\leq \lambda^{j-1}\norm{A^{j-1}(x)} \leq \norm{x}.
\end{equation*}

The proof of \eqref{induction up} is similar.  We provide the details for completeness.  Again, the statement is tautological for $j = 1$.  Suppose that $j \geq 2$, that  \eqref{induction up} holds for $j-1$, that $x \in N_\theta(V_{\rm h})^\infty$ and that $\lambda \norm{x} \leq \norm{A(x)}$.  Since $A(x) \in N_\theta(V_{\rm h})^{\infty}$, by \ref{cfh:2} we must have:
\begin{equation*}
\lambda\norm{A(x)} \leq \max\left\{\norm{A^2(x)},\norm{x}\right\}.
\end{equation*}
Thus, as before we find that $\lambda\norm{A(x)} \leq \norm{A^2(x)}$.  Hence, by induction $\lambda^{j-1}\norm{A(x)} \leq \norm{A^{j-1}A(x)} = \norm{A^j(x)}$.  Therefore:
\begin{equation*}
\lambda^j\norm{x} = \lambda^{j-1}\left(\lambda\norm{x}\right) \leq \lambda^{j-1}\norm{A(x)} \leq \norm{A^j(x)}.\qedhere
\end{equation*}

We now consider \eqref{induction power}.  By \ref{cfh:2}, we have that $\lambda\norm{A^N(x)} \leq \max\left\{\norm{A^{N+1}(x)},\norm{A^{N-1}(x)}\right\}$.  If $\lambda\norm{A^N(x)} \leq \norm{A^{N+1}(x)}$, then by \eqref{induction up} we have $\lambda^N\norm{A^N(x)} \leq \norm{A^{2N}(x)}$.  If $\lambda\norm{A^N(x)} \leq \norm{A^{N-1}(x)}$, then by \eqref{induction down} we have $\lambda^N\norm{A^N(x)} \leq \norm{x}$.  This proves \eqref{induction power}.  
\end{proof}


\section{\texorpdfstring{$L^{2}$}{L\textasciicircum2}--torsion of free-by-cyclic groups}\label{sec:torsion}

In this section, we recall the definition of $L^2$--torsion of a free-by-cyclic group as well as some results necessary for the sequel.  General references for the material in this section are the survey paper by Eckmann~\cite{ar:Eckmann00} and the book by L\"uck~\cite{bk:Luck02}.


\subsection{The von Neumann algebra of a countable group}\label{subsec:von neumann}

Let $G$ be a countable group.  By $L^2(G)$ we denote the vector space of square summable functions $\xi \from G \to \CC$.  We will express an element of $L^2(G)$ as a formal linear combination $\xi = \sum_{g \in G} \xi_g g$ where $\xi_g \in \CC$ and $\sum_{g \in G} \abs{\xi_g}^2 < \infty$.  This is a Hilbert space with inner product:
\begin{equation*}
\I{\xi,\xi'} = \sum_{g \in G} \xi_g\overline{\xi'_g}.
\end{equation*}
The associated norm is denoted $\norm{\xi} = \I{\xi,\xi}^{1/2}$.  The dense subspace of finitely supported functions is isomorphic (as a vector space) to the group algebra $\CC[G]$ and as such we consider $\CC[G]$ as a subspace of $L^2(G)$.  The group $G$ acts isometrically on both the left and the right of $L^2(G)$ where for $h \in G$ and $\xi \in L^2(G)$ we define:
\begin{equation*}
h\cdot\xi = \sum_{g \in G} \xi_g hg = \sum_{g \in G} \xi_{h\inv g} g \mbox{ and } \xi \cdot h = \sum_{g \in G} \xi_g gh = \sum_{g \in G} \xi_{gh\inv} g.
\end{equation*}  
By linearity, these extend to actions on $L^2(G)$ of the group algebra $\CC[G]$ by bounded operators. In the sequel, when we say that some function or object related to $L^2(G)$ is $G$--equivariant or $G$--invariant, we are referring to the left action.  

The \emph{von Neumann algebra} of $G$, denoted $\calN(G)$, is the algebra of $G$--equivariant bounded operators on $L^2(G)$.  That is, an element $A \in \calN(G)$ is a bounded operator $A \from L^2(G) \to L^2(G)$ such that $A(g \cdot \xi) = g \cdot A(\xi)$ for all $g \in G$ and $\xi \in L^2(G)$.  In particular, for each $x \in \CC[G]$, the operator $A_x \from L^2(G) \to L^2(G)$ defined by $A_x(\xi) = \xi \cdot x$ is $G$--equivariant and hence we can consider $\CC[G]$ as a subalgebra of $\calN(G)$.     

There is a notion of \emph{trace} for elements in $\calN(G)$ that is defined by:
\begin{equation*}
\Tr_G(A) = \I{A(\id_G),\id_G}
\end{equation*}
where $\id_G$ is the identity element of $G$.  More generally, a $G$--equivariant bounded operator $A \from L^2(G)^n \to L^2(G)^n$ can be expressed as a matrix $A = [A_{i,j}]$ where each $A_{i,j} \in \calN(G)$ and we define:
\begin{equation*}
\Tr_G(A) = \sum_{i=1}^{n} \Tr_G(A_{i,i}).
\end{equation*}

A \emph{(finitely generated) Hilbert--$G$--module} is a Hilbert space $U$ that admits an isometric action by $G$ and for which there exists a $G$--equivariant isometric embedding $U \to L^2(G)^n$ for some $n$.  The notion of trace allows for the definition of dimension of a Hilbert--$G$--module by:
\begin{equation*}
\dim_G(U) = \Tr_G(P_U)
\end{equation*}
where $P_U \from L^2(G)^n \to L^2(G)^n$ is the projection onto the image of $U$.  

A \emph{morphism} of Hilbert--$G$--modules $U$ and $V$ is a $G$--equivariant bounded operator $A \from U \to V$.  For a morphism $A \from U \to V$ of Hilbert--G--modules, we denote by $F_A \from [0,\infty) \to [0,\infty)$ the \emph{spectral density function} of $A$, that is, $F_A(\lambda) = \Tr_G \left(E^{A^* A}_{\lambda^2}\right)$ where $\left\{ E^{A^* A}_{\lambda}\right\}$ is the spectral family of $A^* A$.  The \emph{Fuglede--Kadison determinant of $A$} is defined by:
\begin{equation*}
\detG (A) = \exp \int_{0^+}^\infty \log (\lambda) \, dF_A
\end{equation*} 
if the integral exists, and $\det_G(A)$ is defined to be $0$ otherwise.

We record the following property of the Fuglede--Kadison determinant for later use.

\begin{lemma}\label{lem:det isomorphism}
Let $A \from U \to U$ and $O \from U \to V$ be morphisms of finite dimensional Hilbert--$G$--modules where $A$ is injective and $O$ is an isomorphism.  Then:
\begin{equation*}
\detG (A) = \detG (OAO\inv).
\end{equation*}
\end{lemma}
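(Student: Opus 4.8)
\textbf{Proof plan for Lemma~\ref{lem:det isomorphism}.}
The plan is to reduce the statement to the standard multiplicativity of the Fuglede--Kadison determinant together with the fact that an isomorphism of finite dimensional Hilbert--$G$--modules has a well-defined, nonzero determinant. Concretely, the first step is to recall (from L\"uck's book, which is cited as a general reference for this section) that for composable morphisms $B \from U \to V$ and $C \from V \to W$ of finite dimensional Hilbert--$G$--modules, one has $\detG(CB) = \detG(C)\detG(B)$ provided the relevant quantities are defined and the weak exactness/leading coefficient hypotheses hold; in the finite dimensional setting the only subtlety is whether the determinants are positive reals or $0$. Since $O$ is an isomorphism it is in particular injective with dense image, so $\detG(O) \in (0,\infty)$, and likewise $\detG(O\inv) \in (0,\infty)$ with $\detG(O)\detG(O\inv) = \detG(\id_U) = 1$; in particular $\detG(O\inv) = \detG(O)\inv$.

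With that in hand the computation is a two-line application of multiplicativity: writing $OAO\inv$ as the composite $O \circ A \circ O\inv$ (which makes sense because $A \from U \to U$ and $O\inv \from V \to U$, $O \from U \to V$), we get
\begin{equation*}
\detG(OAO\inv) = \detG(O)\,\detG(A)\,\detG(O\inv) = \detG(O)\,\detG(A)\,\detG(O)\inv = \detG(A).
\end{equation*}
One should note here that $A$ injective guarantees $\detG(A)$ is defined as a (possibly zero, but in any case honest) value rather than being set to $0$ by default for a non-injective operator; and since conjugating an injective operator by an isomorphism yields an injective operator, $OAO\inv$ is injective as well, so the left-hand side is legitimately defined. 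If one prefers to avoid invoking multiplicativity in the generality of arbitrary morphisms, an alternative is to use the trace-based characterization: $\log\detG(A) = \frac{1}{2}\Tr_G(\log(A^*A))$ when $A$ is injective and the logarithm is integrable, and then observe that $(OAO\inv)^*(OAO\inv) = (O\inv)^* A^* (O^*O) A O\inv$ is conjugate, via the invertible operator $O^*O$... — but this route is messier because $O$ need not be unitary, so I would present the multiplicativity argument as the clean one.

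The main obstacle, such as it is, is purely bookkeeping: one must make sure that the hypotheses under which multiplicativity of $\detG$ holds are actually satisfied here, i.e.\ that finite dimensionality plus injectivity of $A$ and invertibility of $O$ suffice. This is exactly the setting in which multiplicativity is unconditional (no weak-exactness caveats arise because everything in sight is of the same finite $G$--dimension and the relevant operators are injective), so I expect this to go through without incident; the only care needed is to cite the precise form of the multiplicativity statement and the statement $\detG(O) \neq 0$ for an isomorphism, both of which are in the references already named in Section~\ref{sec:torsion}.
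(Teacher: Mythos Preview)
Your proposal is correct and follows essentially the same approach as the paper: both arguments invoke multiplicativity of the Fuglede--Kadison determinant (the paper cites \cite[Theorem~3.14~(1)]{bk:Luck02}) to obtain $\detG(O)\detG(O\inv)=\detG(I)=1$ and then $\detG(OAO\inv)=\detG(O)\detG(A)\detG(O\inv)=\detG(A)$. Your additional commentary on injectivity and the alternative trace-based route is accurate but unnecessary for the write-up.
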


\begin{proof}
Let $I$ denote the identity operator $I \from U \to U$.  By \cite[Theorem~3.14~(1)]{bk:Luck02} we have that $1 = \detG (I) = \detG (OO\inv) = \detG (O) \cdot \detG (O\inv)$.  Hence, by \cite[Theorem~3.14~(1)]{bk:Luck02} again, we find:
\begin{equation*}
\detG (OAO\inv) = \detG(O) \cdot \detG(A) \cdot \detG(O\inv) = \detG (A).\qedhere
\end{equation*}
\end{proof}


\subsection{\texorpdfstring{$L^{2}$}{L\textasciicircum2}--torsion of free-by-cyclic groups}\label{subsec:torsion}

Let $G$ be a countable group and let $X$ be a $CW$--complex that admits a continuous action by $G$ that freely permutes the cells of $X$ and such that there are only finitely many $G$--orbits of cells.  The cellular chain complex $C_*(X) = \{\bd_j \from C_j(X) \to C_{j-1}(X)\}$ consists of free $\ZZ[G]$--modules of finite rank.  Thus $C_*^{(2)}(X) = L^2(G) \otimes_{\ZZ[G]} C_*(X)$ is a chain complex of Hilbert--$G$--modules.  If $C_*^{(2)}(X)$ is weakly acyclic, i.e., $\ker \bd_j = \clos(\img \bd_{j+1})$ for all $j$, and $\detG (\bd_j) \neq 0$ for all $j$, then the \emph{$L^2$--torsion} of $X$ is defined by:
\begin{equation*}
\rho^{(2)}(X) = -\sum_{j \geq 0} (-1)^j \log \detG (\bd_j). 
\end{equation*} 

The situation we are most often interested in is when $G$ has a finite classifying space, $BG$, and $X = EG$.  There is a large class of groups for which the chain complex of Hilbert--$G$--modules $\{\bd_j \from C^{(2)}_j(EG) \to C^{(2)}_{j-1}(EG)\}$ satisfies the above assumptions and moreover the $L^2$--torsion of $EG$ depends only on $G$ and not the particular choice of $BG$~\cite[Lemma~13.6]{bk:Luck02}.  This class of groups includes free-by-cyclic groups in particular and thus we are justified in defining
\begin{equation*}
\rho^{(2)}(G_\phi) = \rho^{(2)}(EG_\phi).
\end{equation*}

This invariant behaves in some ways like Euler characteristic.  The following property illustrates this connection and is used later on.

\begin{theorem}[{\cite[Theorem~7.27~(4)]{bk:Luck02}}]\label{th:torsion powers}
Suppose $\phi$ is an outer automorphism of $\FF$.  Then for all $k \geq 1$:
\[ \rho^{(2)}(G_{\phi^k}) = k\rho^{(2)}(G_\phi).  \]
\end{theorem}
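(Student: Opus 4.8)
The plan is to realise $G_{\phi^k}$ as an index--$k$ subgroup of $G_\phi$ and then to invoke the multiplicativity of $L^2$--torsion under finite covers, which was mentioned in the Introduction. First I would identify the subgroup: inside $G_\phi = \FF \rtimes_{\Phi}\I{t}$ consider $H = \I{\FF,t^k}$. The relation $t^{-k}xt^k = \Phi^k(x)$ holds in $G_\phi$ for every $x \in \FF$, so the assignment $x \mapsto x$, $s\mapsto t^k$ defines a homomorphism $G_{\phi^k} = \FF\rtimes_{\Phi^k}\I{s}\to H$; it is clearly surjective, and it is injective because every element of $G_{\phi^k}$ has the normal form $xs^m$ with $x\in\FF$, $m\in\ZZ$, while every element of $H$ has the form $xt^{km}$, and distinct such expressions remain distinct in $G_\phi$. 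Reducing modulo the normal subgroup $\FF\trianglelefteq G_\phi$ sends $G_\phi$ onto $\ZZ$ and $H$ onto $k\ZZ$, whence $[G_\phi:G_{\phi^k}] = k$.

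Next I would compare classifying spaces. Since $G_\phi$ admits a finite classifying space (it is the fundamental group of a graph mapping torus), choose a free contractible $G_\phi$--$CW$--complex $X = EG_\phi$ with finitely many $G_\phi$--orbits of cells. Restricting the action along $G_{\phi^k}\inject G_\phi$, the space $X$ is also a free contractible $G_{\phi^k}$--$CW$--complex with finitely many $G_{\phi^k}$--orbits of cells, hence a model for $EG_{\phi^k}$; equivalently $G_{\phi^k}\backslash X\to G_\phi\backslash X$ is a $k$--fold covering of finite $CW$--complexes. Because free-by-cyclic groups lie in the class for which $\rho^{(2)}(EG)$ is defined and independent of the chosen model (\cite[Lemma~13.6]{bk:Luck02}, recalled in Section~\ref{subsec:torsion}), we have $\rho^{(2)}(G_{\phi^k}) = \rho^{(2)}(X)$ computed over $\calN(G_{\phi^k})$.

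Finally I would apply the restriction property of $L^2$--torsion: if $X$ is a free $G_\phi$--$CW$--complex of finite type which is $L^2$--acyclic in the sense of Section~\ref{subsec:torsion} and $H\leq G_\phi$ has finite index $k$, then $X$ is likewise $L^2$--acyclic over $\calN(H)$ and $\rho^{(2)}(X;\calN(H)) = k\cdot\rho^{(2)}(X;\calN(G_\phi))$; see \cite[Theorem~3.96~(5)]{bk:Luck02}. Taking $H = G_{\phi^k}$ and combining with the previous paragraph gives $\rho^{(2)}(G_{\phi^k}) = k\,\rho^{(2)}(G_\phi)$, as claimed.

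I do not expect a genuine obstacle here: the argument is entirely formal once one has the subgroup picture. The only point demanding any care is the bookkeeping of hypotheses --- checking that $G_\phi$, and hence its finite-index subgroup $G_{\phi^k}$, really belongs to the class of groups for which $\rho^{(2)}$ is well defined, model independent, and obeys the restriction formula --- but this is exactly the standing assumption recorded in Sections~\ref{subsec:von neumann}--\ref{subsec:torsion}, so nothing beyond quoting it is needed.
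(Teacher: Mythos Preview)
Your argument is correct and is the standard one: identify $G_{\phi^k}$ with the index--$k$ subgroup $\langle \FF, t^k\rangle \leq G_\phi$ and apply the restriction formula for $L^2$--torsion under finite-index subgroups. Note, however, that the paper does not supply its own proof of this statement---it is quoted directly from L\"uck~\cite[Theorem~7.27~(4)]{bk:Luck02} without argument---so there is nothing in the paper to compare your proof against. Your write-up is a faithful reconstruction of what that citation encapsulates.
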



\subsection{Computing the \texorpdfstring{$L^{2}$}{L\textasciicircum2}--torsion from a topological representative}\label{subsec:compute}

In the remainder of this section, we briefly explain how to compute the $L^2$--torsion $-\rho^{(2)}(G_\phi)$ from a homotopy equivalence $f \from \Gamma \to \Gamma$ that represents $\phi \in \Out(\FF)$.  See \cite[Section~4]{ar:Clay17-2} for complete details.  As in Section~\ref{subsec:relative 1-chains}, we assume that $f$ fixes a vertex $* \in \sfV(\Gamma)$ and fixing an isomorphism $\pi_1(\Gamma,*) \cong \FF$, we let $\Phi_f$ denote the automorphism induced by $f$ and the trivial path based at $*$.  We will use the semi-direct product presentation $\FF \rtimes_{\Phi_f} \I{t}$ for the corresponding free-by-cyclic group $G_\phi$. Let $\tf \from \tGamma \to \tGamma$ be the corresponding lift of $f$ to the universal cover and let $A_f \from C_1(\tGamma;\QQ) \to C_1(\tGamma;\QQ)$ be the corresponding homomorphism.   

Let $X_f$ be the mapping torus of $f$, that is:
\begin{equation*}
X_f = \raisebox{5pt}{$\Gamma \times [0,1]$} \Big/ \raisebox{-5pt}{$(x,0) \sim (f(x),1)$},
\end{equation*}
and let $\tX_f$ the universal cover of $X_f$.  An edge in $\tX_f$ is called \emph{horizontal} if it is the lift of an edge in $\Gamma \times \{0\} \subset X_f$ and \emph{vertical} otherwise.  The subspace of (cellular) 1--chains $C_1(\tX_f)$ spanned by horizontal edges is a free $\ZZ[G_\phi]$--module of rank $n = \#\abs{\sfE(\Gamma)}$.  Likewise, the set of (cellular) 2--chains $C_2(\tX_f)$ is also a free $\ZZ[G_\phi]$--module of rank $n = \#\abs{\sfE(\Gamma)}$.  Hence, after choosing appropriate bases, the cellular boundary map $\bd_2 \from C_2(\tX_f) \to C_1(\tX_f)$ followed by projection to the subspace spanned by the horizontal edges determines a $\ZZ[G_\phi]$--module homomorphism:
\begin{equation*}
\bd_{\rm hor} \from \ZZ[G_\phi]^n \to \ZZ[G_\phi]^n
\end{equation*}
that is given by right multiplication by a matrix of the form $I - tJ_1(f)$ where $I$ is the identity matrix and $J_1(f) \in \Mat_{n}(\ZZ[\FF])$, which is the so-called \emph{Jacobian}.  Indeed, each 2--cell in $\tX_f$ has a unique bottom edge $e$ and the top edges are none other than $t\tf(e)$ so that the horizontal components of the boundary of this 2--cell is $e - t\tf(e)$.  See Figure~\ref{fig:2cell}.  The rows of $J_1(f)$ just record the edges in $A_{f}(e)$, using the isomorphism between $C_1(\tGamma)$ and $\ZZ[\FF]^n$.  Let $L_f \from L^2(G_\phi)^n \to L^2(G_\phi)^n$ be the operator given by right multiplication by $tJ_1(f)$ so that $\bd_{\rm hor} = I - L_f$ where $I$ is now consider as the identity operator.  Then, if the image of every vertex in $\Gamma$ is fixed by $f$, it was shown that (cf.~\cite[Theorem~7.29]{bk:Luck02}):
\begin{equation*}
-\rho^{(2)}(G_\phi) = \log \detG[G_\phi] \left( I - L_f \right).
\end{equation*} 

\begin{figure}
\centering
\begin{tikzpicture}
\draw[very thick] (0,0) rectangle (4,3);
\fill (0,0) circle [radius=0.075];
\fill (4,0) circle [radius=0.075];
\fill (0,3) circle [radius=0.075];
\fill (4,3) circle [radius=0.075];
\fill (1,3) circle [radius=0.075];
\fill (2,3) circle [radius=0.075];
\fill (3,3) circle [radius=0.075];
\draw[thick] (1.9,0.1) -- (2,0) -- (1.9,-0.1);
\draw[thick] (0.4,3.1) -- (0.5,3) -- (0.4,2.9);
\draw[thick] (1.4,3.1) -- (1.5,3) -- (1.4,2.9);
\draw[thick] (2.4,3.1) -- (2.5,3) -- (2.4,2.9);
\draw[thick] (3.4,3.1) -- (3.5,3) -- (3.4,2.9);
\draw[thick] (-0.1,1.4) -- (0,1.5) -- (0.1,1.4);
\draw[thick] (3.9,1.4) -- (4,1.5) -- (4.1,1.4);
\node[below] at (2,0) {\footnotesize $e$};
\node[above] at (2,3) {\footnotesize $t\tf(e)$};
\end{tikzpicture}
\caption{A 2--cell in $\tX_f$.}\label{fig:2cell}
\end{figure}

When $f \from \Gamma \to \Gamma$ respects a filtration $\emptyset = \Gamma_{0} \subset \Gamma_{1} \subset \cdots \subset \Gamma_{S} = \Gamma$ we can break up the above formula into pieces associated to the filtration elements.  To this end, for each $1 \leq s \leq S$, we consider the subcomplex $X_{f,s} \subseteq X_f$ which is the mapping torus of the restriction of $f$ to $\Gamma_s$.  We note that this subcomplex is not necessarily connected.  However, when the filtration is reduced, then there is exactly one component of $X_{f,s}$ that is not a component of $X_{f,s-1}$.  Let $\tX_{f,s}$ be the union of the lifts of $X_{f,s}$ to $\tX_f$.  The subspace of $C_1(\tX_f)$ spanned by the horizontal edges that lie in $\tX_{f,s} - \tX_{f,s-1}$ is a free $\ZZ[G_\phi]$--module of rank $n_s = \#\abs{\sfE(\Gamma_s) - \sfE(\Gamma_{s-1})}$.  Hence as above, after choosing appropriate bases, the cellular boundary map $\bd_2 \from C_2(\tX_{f,s} - \tX_{f,s-1}) \to C_1(\tX_{f,s})$ followed by projection to the subspace spanned by horizontal edges in $\tX_{f,s} - \tX_{f,s-1}$ determines a $\ZZ[G_\phi]$--module homomorphism:
\begin{equation*}
\bd_{{\rm hor}, s} \from \ZZ[G_\phi]^{n_s} \to \ZZ[G_\phi]^{n_s}
\end{equation*}
that is given by right multiplication by a matrix of the form $I - tJ_1(f)_s$ where $J_1(f)_s \in \Mat_{n_s}(\ZZ[\FF])$.  In terms of matrices, $J_1(f)$ is lower block triangular with blocks $J_1(f)_s$ on the diagonal, as was the connection between $M(f)$ and $M(f)_s$.  Let $L_{f,s} \from L^2(G_\phi)^{n_s} \to L^2(G_\phi)^{n_s}$ by the operator given by right multiplication by $tJ_1(f)_s$ so that $\bd_{{\rm hor}, s} = I - L_{f,s}$.  Using the lower block triangular form of $J_1(f)$, which comes the lower block triangular form of $M(f)$, the following theorem was shown.   

\begin{theorem}[{\cite[Theorem~4.10~\&~Remark~4.12]{ar:Clay17-2}}]\label{th:det-splitting}
Suppose that $f \from \Gamma \to \Gamma$ is a homotopy equivalence that respects the filtration $\emptyset = \Gamma_0 \subset \Gamma_1 \subset \cdots \subset \Gamma_S = \Gamma$, that $f \from \Gamma \to \Gamma$ represents the outer automorphism $\phi \in \Out(\FF)$ and that the image of each vertex in $\Gamma$ is fixed by $f$.  Then:
\begin{equation*}\label{eq:det-splitting}
-\rho^{(2)}(G_{\phi}) = \sum_{s \in \PF(f)} \log \detG[G_\phi] \bigl(I - L_{f, \, s}\bigr).
\end{equation*}
\end{theorem}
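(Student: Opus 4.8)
The starting point is the single-stratum identity recalled in Section~\ref{subsec:compute}, namely
$-\rho^{(2)}(G_\phi) = \log\detG[G_\phi](I - L_f)$, established in \cite{ar:Clay17-2} (cf.~\cite[Theorem~7.29]{bk:Luck02}). So the goal is to promote this into a sum over the strata in $\PF(f)$, discarding those strata $s$ for which $M(f)_s$ is either the zero matrix or has Perron--Frobenius eigenvalue $1$. The plan has three ingredients: (1) the block-triangular structure of $J_1(f)$ induced by the filtration, which lets us factor the Fuglede--Kadison determinant of $I - L_f$ as a product over strata; (2) a vanishing statement showing that strata outside $\PF(f)$ contribute $\log\detG[G_\phi](I-L_{f,s}) = 0$; and (3) bookkeeping to reduce the general ``image of each vertex is fixed'' hypothesis to the single-vertex-fixed case, which is presumably already handled in \cite{ar:Clay17-2}.

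\textbf{Step 1: block-triangular factorization.}
Since edges lower in the filtration are ordered first, $J_1(f)$ is lower block triangular with diagonal blocks $J_1(f)_s$, hence $I - tJ_1(f)$ is lower block triangular with diagonal blocks $I - tJ_1(f)_s = I - L_{f,s}$ (as operators). The Fuglede--Kadison determinant is, in general, \emph{not} multiplicative for arbitrary exact sequences, but it \emph{is} multiplicative for triangular morphisms of Hilbert--$G$--modules; this is the content of results such as \cite[Theorem~3.14~(1) and~(5)]{bk:Luck02} applied inductively to the short exact sequences
$0 \to L^2(G_\phi)^{n_{<s}} \to L^2(G_\phi)^{n_{\le s}} \to L^2(G_\phi)^{n_s} \to 0$
compatible with the boundary operators. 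One must check that each $I - L_{f,s}$ (and hence each partial-stratum operator) has nonzero Fuglede--Kadison determinant so that the multiplicativity is valid and the logarithm is finite; this follows from the fact that $I - L_f$ itself has finite nonzero determinant together with the factorization, arguing from the top stratum down. The upshot of this step is
\begin{equation*}
-\rho^{(2)}(G_\phi) = \log\detG[G_\phi](I - L_f) = \sum_{s=1}^S \log\detG[G_\phi](I - L_{f,s}).
\end{equation*}

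\textbf{Step 2: vanishing of inessential strata.}
It remains to show that $\log\detG[G_\phi](I-L_{f,s}) = 0$ whenever $s \notin \PF(f)$, i.e., whenever $M(f)_s$ is the zero matrix or $M(f)_s$ is irreducible with $\lambda(f)_s = 1$. When $M(f)_s$ is the zero matrix, $J_1(f)_s$ is the zero matrix, so $L_{f,s}=0$ and $I - L_{f,s} = I$, whose Fuglede--Kadison determinant is $1$, giving $\log 1 = 0$. When $M(f)_s$ is irreducible with $\lambda(f)_s = 1$, the stratum corresponds to a maximal invariant subgraph on which $f$ acts, up to homotopy, by a permutation of the edges composed with a unit-growth phenomenon; concretely this means $J_1(f)_s$ is (conjugate over $\ZZ[\FF]$ to) a permutation-type matrix with entries single group elements, so $L_{f,s}$ is right multiplication by an element of $G_\phi$ times a permutation matrix --- in particular $L_{f,s}$ is a partial isometry-like operator, or more directly an isomorphism induced by multiplication by a unit of $\ZZ[G_\phi]$. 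One then argues, as in the polynomial-growth case treated in \cite{ar:Clay17-2}, that $\detG[G_\phi](I - L_{f,s}) = 1$: for instance $L_{f,s}$ is conjugate to right multiplication by a monomial $t \cdot g$ (times a permutation of coordinates), and by Lemma~\ref{lem:det isomorphism} one may replace $I - L_{f,s}$ by a conjugate, reducing to the computation that right multiplication by $1 - tg$ on $L^2(G_\phi)$ has Fuglede--Kadison determinant $1$, since $1 - tg$ is a nonzerodivisor whose Mahler measure / determinant over $\calN(G_\phi)$ is trivial (the monomial $tg$ has $\calN(G_\phi)$-trace of all its powers equal to $0$, so $\log\detG(I - tg) = -\sum_{k\ge 1}\frac{1}{k}\Tr_{G_\phi}((tg)^k) = 0$, making this step rigorous via the series expansion of the Fuglede--Kadison determinant valid when the operator norm of $L_{f,s}$ is controlled, or via a limiting/approximation argument if not).

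\textbf{Step 3: reducing the vertex hypothesis.}
Finally, the hypothesis in Theorem~\ref{th:det-splitting} that the image of each vertex of $\Gamma$ be fixed by $f$ is exactly the hypothesis under which the single-stratum identity of Section~\ref{subsec:compute} was stated, so no additional work is needed here beyond quoting that identity; the statement of Theorem~\ref{th:det-splitting} already carries this hypothesis forward verbatim. (If one wished to state it without that hypothesis, one would pass to a power $\phi^k$ for which a representative with all vertices fixed exists, apply Theorem~\ref{th:torsion powers} to divide by $k$, and check that $\PF$ and the stratum-wise determinants scale correctly --- but this is not needed for the statement as worded.)

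\textbf{Main obstacle.}
The delicate point is Step~1: the Fuglede--Kadison determinant is genuinely \emph{not} multiplicative across arbitrary short exact sequences of Hilbert--$G$--module chain complexes, and one needs the precise statement (e.g.\ \cite[Theorem~3.14~(5)]{bk:Luck02}) that guarantees multiplicativity for a morphism respecting a filtration, \emph{provided} the relevant maps have finite positive determinant. Verifying that each diagonal block $I - L_{f,s}$ --- and each partial composite --- is injective with finite nonzero Fuglede--Kadison determinant, so that the inductive splitting is legitimate and no $\log 0 = -\infty$ term is hidden, is where the real care goes; the block-triangular bookkeeping of $M(f)$ versus $J_1(f)$ (already recorded in Section~\ref{subsec:compute}) makes the algebraic side routine, but the analytic side of determinant multiplicativity is the crux.
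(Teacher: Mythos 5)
This statement is reproduced in the paper by citation to \cite[Theorem~4.10 and Remark~4.12]{ar:Clay17-2}; the paper gives no proof of its own, so there is no in-text argument to compare against. Your outline is, however, a reasonable reconstruction of the argument one expects: block-triangular factorization of the Fuglede--Kadison determinant, vanishing of the strata outside $\PF(f)$, and the observation that the vertex hypothesis is exactly the one under which the single-stratum identity was stated. Two places merit tightening. For Step~1 the precise reference is \cite[Theorem~3.14~(2)]{bk:Luck02} (the short-exact-sequence behavior of the Fuglede--Kadison determinant), not~(1) or~(5), and you are right that the ``determinant class'' hypothesis is the real thing to verify along the filtration. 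For Step~2 in the case $\lambda(f)_s = 1$: the series $\log\detG[G_\phi](I - L_{f,s}) = -\sum_{k\geq 1}\tfrac{1}{k}\Tr_{G_\phi}(L_{f,s}^{k})$ does not converge in the naive sense because $\opnorm{L_{f,s}} = 1$, so the ``limiting/approximation argument'' you gesture at is doing real work. The more robust route is to observe that an irreducible non-negative integer matrix with spectral radius $1$ is a permutation matrix, so after conjugating by a coordinate permutation $I - tJ_1(f)_s$ decomposes into cyclic blocks, and each block of cycle length $k$ has Fuglede--Kadison determinant equal to that of right multiplication by $1 - t^{k}h$ on $L^2(G_\phi)$ for some $h \in \FF$, which is $1$ since $t^{k}h$ has infinite order; this is the same calculation via \cite[Example~3.22]{bk:Luck02} that appears in the proof of Theorem~\ref{th:determinant on quasi-fixed}. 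With that substitution the argument is correct.
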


In the context of the chain flare condition we will use the following notation.  Let $f \from \Gamma \to \Gamma$ be a homotopy equivalence that represents $\phi \in \Out(\FF)$ and $H \subset \Gamma$ an $f$--invariant subgraph.  We consider the filtration (which is not necessarily maximal nor reduced) $\emptyset = \Gamma_0 \subset \Gamma_1 \subset \Gamma_2 = \Gamma$ where $\Gamma_1 = H$.  Then we set $n_H$ to be equal to $n_2$, the number of edges in $\Gamma - H$, and set $L_{f,H} \from L^2(G_\phi)^{n_H} \to L^2(G_\phi)^{n_H}$ to the be operator $L_{f,2}$.


\section{Brown measure and Haagerup--Schultz invariant subspaces}\label{sec:brown}

In this section we introduce the Brown measure $\mu_A$ for a $G$--equivariant bounded operator $A \from L^2(G)^n \to L^2(G)^n$, state its relation to the Fuglede--Kadison determinant and list the key properties that we require for the sequel.  Additionally, we introduce the Haagerup--Schultz invariant subspaces $\frakE(A,\nu)$ and $\frakF(A,\nu)$ associated to bounded operator on $A \from L^2(G)^n \to L(G)^n$ and state their relation to the Brown measure in the previously mentioned case when $A$ is $G$--equivariant.  The most important result of this section is Theorem~\ref{th:subspaces} which is essential for the proof of Theorem~\ref{th:mahler}.  The results in this section hold for more general von Neumann algebras but are stated in setting in which they will be applied within.  


\subsection{Brown measure}\label{subsec:brown}

Let $G$ be a countable group and $U$ a Hilbert--$G$--module.  Associated to a morphism $A \from U \to U$ is a Borel measure on $\CC$, called the \emph{Brown measure} and denoted $\mu_A$~\cite{col:Brown86}.  This measure maybe considered as giving the multiplicity of the values of the spectrum of $A$.  Indeed, if $G$ is a finite group, then $U$ is isomorphic as a vector space to $\CC^{n\abs{G}}$ for some $n$ and considering $A$ as an element of $\Mat_{n\abs{G}}(\CC)$ we have:
\begin{equation*}
\mu_A = \frac{1}{\abs{G}} \sum_{j = 1}^{n\abs{G}} \delta_{\lambda_j}
\end{equation*} 
where $\lambda_1,\ldots,\lambda_{n\abs{G}}$ are the eigenvalues of $A$ listed with multiplicity and $\delta_\lambda$ is the Dirac measure concentrated on the complex number $\lambda$.  

We summarize some of the properties of the Brown measure and its relation to the Fuglede--Kadison in the following theorem.

\begin{theorem}[{\cite[Theorem~3.13]{col:Brown86}}]\label{th:brown}
Let $A \from U \to U$ be a morphism of Hilbert--$G$--modules.  The following properties hold.
\begin{enumerate}
\item\label{brown support} The support of $\mu_A$ is contained in the spectrum of $A$.

\item\label{brown measure} $\mu_A(\CC) = \dim_G (U)$.

\item\label{brown integral} If $h \from \CC \to \CC$ is holomorphic, then
\begin{equation*}
\log \detG(h(A)) = \int_{\CC} \log \abs{h(z))} \, d\mu_{A}.
\end{equation*}

\end{enumerate}
\end{theorem}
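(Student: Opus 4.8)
This is \cite[Theorem~3.13]{col:Brown86}; we indicate how Brown's construction of the measure $\mu_A$ and the verification of the three properties run in the present setting, following \cite{col:Brown86} and \cite[Section~3.4]{bk:Luck02}. The first step is to place ourselves in a finite von Neumann algebra with a faithful normal trace. Fix a $G$--equivariant isometric embedding $U \hookrightarrow L^2(G)^n$ and let $P \from L^2(G)^n \to L^2(G)^n$ be the orthogonal projection onto the image of $U$. Since $P$ is the projection onto a $G$--invariant subspace, $P \in \Mat_n(\calN(G))$, and the algebra $\calM$ of $G$--equivariant bounded operators $U \to U$ is the reduced von Neumann algebra $P\bigl(\Mat_n(\calN(G))\bigr)P$; it is finite and carries the faithful normal trace $\tau = \Tr_G|_\calM$, for which $\tau(I_U) = \dim_G(U)$. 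For $B \in \calM$ with $\detG(B) \neq 0$ one has $\log\detG(B) = \tau(\log\abs{B})$. Brown's construction now applies verbatim to the pair $(\calM,\tau)$ and the element $A \in \calM$.

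The plan is then the following. One considers the function $L_A \from \CC \to [-\infty,\infty)$ given by $L_A(\lambda) = \log\detG(A - \lambda I)$. The key analytic input, which we would take from \cite{col:Brown86} (see also \cite[Section~3.4]{bk:Luck02}), is that $L_A$ is locally integrable and subharmonic on $\CC$, and that $L_A(\lambda) = \dim_G(U)\log\abs{\lambda} + o(1)$ as $\abs{\lambda} \to \infty$; this asymptotic is obtained by writing $A - \lambda I = (-\lambda I)(I - \lambda\inv A)$, using multiplicativity of the Fuglede--Kadison determinant, $\detG(\lambda I) = \abs{\lambda}^{\dim_G(U)}$, and continuity of $\detG$ at $I$. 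One then sets $\mu_A := \frac{1}{2\pi}\Delta L_A$ in the sense of distributions; subharmonicity of $L_A$ forces $\mu_A$ to be a positive Borel measure of finite mass.

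With $\mu_A$ in hand I would verify the three properties in turn. For~\eqref{brown support}: if $\lambda_0 \notin \sigma(A)$, then $A - \lambda I$ is invertible and depends holomorphically on $\lambda$ near $\lambda_0$; choosing a branch of $\log$ holomorphic on a simply connected neighborhood of $\sigma(A - \lambda_0 I) \subset \CC \setminus \{0\}$, holomorphic functional calculus makes $\lambda \mapsto \tau\bigl(\log(A - \lambda I)\bigr)$ a holomorphic function near $\lambda_0$ whose real part is $L_A$, so $L_A$ is harmonic near $\lambda_0$ and $\mu_A$ vanishes there; hence $\supp\mu_A \subseteq \sigma(A)$, which is compact. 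For~\eqref{brown measure}: since $\mu_A$ is now known to be compactly supported, for all large $R$ Green's theorem gives $2\pi\,\mu_A(\CC) = \int_{\abs{\lambda}\le R}\Delta L_A = \oint_{\abs{\lambda} = R}\partial_r L_A\,ds$, and the asymptotic expansion of $L_A$ (whence $\partial_r L_A = \dim_G(U)/R + O(R^{-2})$ on $\abs{\lambda} = R$) shows the right-hand side tends to $2\pi\dim_G(U)$.

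Property~\eqref{brown integral} requires the most work. For a polynomial $h(z) = c\prod_i(z - a_i)$ one has $h(A) = (cI)\prod_i(A - a_i I)$, so multiplicativity of the Fuglede--Kadison determinant \cite[Theorem~3.14~(1)]{bk:Luck02} together with $\detG(cI) = \abs{c}^{\dim_G(U)}$ reduces the claim, via~\eqref{brown measure}, to the linear case $h(z) = z - a$. For that case, let $p(\lambda) = \int_\CC \log\abs{z - \lambda}\,d\mu_A(z)$ be the logarithmic potential of $\mu_A$; since $\mu_A$ is compactly supported of mass $\dim_G(U)$, $p$ is locally integrable with $\frac{1}{2\pi}\Delta_\lambda p = \mu_A$ and $p(\lambda) = \dim_G(U)\log\abs{\lambda} + o(1)$ at infinity. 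Then $L_A - p$ is harmonic on all of $\CC$ (Weyl's lemma) and tends to $0$ at infinity, hence is identically $0$ by the maximum principle; evaluating at $\lambda = a$ gives $\log\detG(A - aI) = p(a) = \int_\CC \log\abs{z - a}\,d\mu_A(z)$, which is the linear case. Finally, for a general entire $h$ one approximates $h$ uniformly on a compact neighborhood of $\sigma(A)$ by polynomials $p_k$ (Runge's theorem), so that $p_k(A) \to h(A)$ in operator norm; equivalently one establishes the pushforward identity $\mu_{h(A)} = h_*\mu_A$ and combines it with the case $h(z) = z$ already proved. I expect this last limit to be the main obstacle: the Fuglede--Kadison determinant is only upper semicontinuous under norm convergence and $z \mapsto \log\abs{h(z)}$ can take the value $-\infty$ on $\supp\mu_A$, so the zeros of $h$ lying in $\sigma(A)$ need separate treatment (there both sides are $-\infty$). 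This is precisely the technical content of \cite[Theorem~3.13]{col:Brown86}, which we invoke at this point, the reduction of the first paragraph having placed us in Brown's framework.
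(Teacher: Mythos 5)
The paper gives no proof of this result; it simply cites Brown's Theorem~3.13 (and indeed the construction $\mu_A = \tfrac{1}{2\pi}\nabla^2 \log\detG(A - zI)$ that you describe is exactly the one the paper itself invokes in the proof of Lemma~\ref{lem:induction properties}~\eqref{ind measure}). Your sketch is a faithful account of Brown's construction as presented in~\cite{col:Brown86} and~\cite[Section~3.4]{bk:Luck02}: the reduction to a finite von Neumann algebra with trace via the embedding $U \hookrightarrow L^2(G)^n$, the definition of $\mu_A$ as the Riesz measure of the subharmonic function $L_A$, the potential-theoretic identification in the linear case, and the factorization argument for polynomials are all correct, and you rightly flag that the passage from polynomials to general holomorphic $h$ (the semicontinuity of $\detG$ and the treatment of zeros of $h$ in $\sigma(A)$) is the genuine technical content one must cite Brown for. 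As a review: this is sound and matches the standard route, but since the paper offers no proof there is nothing to contrast it against.
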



\subsection{Haagerup--Schultz invariant subspaces}\label{subsec:HS}

In their study of the invariant subspace problem for operators in a type $\Pi_1$--factor, Haagerup--Schultz identified the following subspaces associated to a bounded operator on a Hilbert space.

\begin{definition}[{\cite[Definition~3.1 \& Lemma~3.2]{ar:HS09}}]\label{def:subspaces}
Let $A\from \calH \to \calH$ be a bounded operator on a Hilbert space.  For $\nu > 0$ we define the following $A$--invariant closed subspaces of $\calH$:
\begin{align*}
\frakE(A,\nu) &= \left\{ \xi \in \calH \mid \exists (\xi_{j}) \subset \calH \mbox{ with } \lim_{j \to \infty} \norm{\xi_j - \xi} = 0 \mbox{ and } \limsup_{j \to \infty} \norm{A^{j}\xi_{j}}^{1/j} \leq \nu \right\} \\[5pt]
\frakF(A,\nu) &= \left\{ \xi \in \calH \mid \exists (\xi_{j}) \subset \calH \mbox{ with } \lim_{j \to \infty} \norm{A^{j}\xi_{j} - \xi} = 0 \mbox{ and } \limsup \norm{\xi_{j}}^{1/j} \leq \nu\inv \right\}
\end{align*}
\end{definition}

\begin{remark}[{\cite[Remark~3.3]{ar:HS09}}]\label{rem:invertible}
If $A \from \calH \to \calH$ is invertible, then:
\begin{equation*}
\frakF(A,\nu) = \frakE(A\inv,\nu\inv) = \left\{ \xi \in \calH \mid \exists (\xi_{j}) \subset \calH \mbox{ with } \lim_{j \to \infty} \norm{\xi_j - \xi} = 0 \mbox{ and } \limsup_{j \to \infty} \norm{A^{-j}\xi_{j}}^{1/j} \leq \nu\inv \right\}
\end{equation*}
\end{remark}

There is a deep connection between these subspaces and the Brown measure $\mu_A$ when $A \from U \to U$ is a morphism of Hilbert--$G$--modules.  In particular, Haagerup--Schultz prove that the dimension of $\frakE(A,\nu)$ is the Brown measure of $\{z \in \CC \mid \abs{z} \leq \nu \}$ and the dimension of $\frakF(A,\nu)$ is the Brown measure of $\{z \in \CC \mid \abs{z} \geq \nu \}$~\cite[Lemma~7.18]{ar:HS09}.

For our purposes, we use these subspaces to isolate in the unit circle in the integral representation of $\log \detG (I - A^k)$.

\begin{theorem}\label{th:subspaces}
Let $A \from U \to U$ be a morphism of Hilbert--$G$--modules and let $\mu_A$ denote the Brown measure of $A$.  For $\nu > 1$, we set $\frakK_\nu = \frakE(A,\nu) \cap \frakF(A,\nu\inv)$. Then for $k \geq 1$ we have:
\begin{equation*}
\log \detG \bigl(I - A^k\bigr)\big|_{\frakK_\nu} = \int_{\nu\inv \leq \abs{z} \leq \nu} \log \abs{1 - z^k} \, d\mu_A.
\end{equation*}
\end{theorem}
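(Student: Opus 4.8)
The plan is to reduce the statement to the Brown‐measure integration formula of Theorem~\ref{th:brown}\eqref{brown integral} applied to the restriction of $A$ to the closed $A$--invariant subspace $\frakK_\nu = \frakE(A,\nu) \cap \frakF(A,\nu\inv)$, and then to identify the Brown measure of this restriction with the part of $\mu_A$ supported on the annulus $\nu\inv \leq \abs{z} \leq \nu$. First I would observe that $\frakK_\nu$ is a Hilbert--$G$--submodule: both $\frakE(A,\nu)$ and $\frakF(A,\nu\inv)$ are closed and $A$--invariant by Definition~\ref{def:subspaces}, and since $A$ is $G$--equivariant and the defining conditions (using sequences $(\xi_j)$ with $\limsup \norm{A^j\xi_j}^{1/j} \leq \nu$, etc.) are preserved under the isometric $G$--action, each subspace is $G$--invariant; being a closed $G$--invariant subspace of $L^2(G)^n$, their intersection is again a Hilbert--$G$--module. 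Then $A|_{\frakK_\nu} \from \frakK_\nu \to \frakK_\nu$ is a morphism of Hilbert--$G$--modules, so $I - (A|_{\frakK_\nu})^k = (I - A^k)|_{\frakK_\nu}$ is as well, and the function $h(z) = 1 - z^k$ is holomorphic on $\CC$, so Theorem~\ref{th:brown}\eqref{brown integral} gives
\begin{equation*}
\log \detG \bigl(I - A^k\bigr)\big|_{\frakK_\nu} = \int_{\CC} \log\abs{1 - z^k} \, d\mu_{A|_{\frakK_\nu}}.
\end{equation*}

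The remaining task is to show that $\mu_{A|_{\frakK_\nu}}$ is exactly the restriction of $\mu_A$ to the closed annulus $\{z : \nu\inv \leq \abs{z} \leq \nu\}$; granting this, the integral above collapses to the integral over that annulus and the theorem follows. For this I would invoke the Haagerup--Schultz computation cited just before the theorem: $\dim_G \frakE(A,t) = \mu_A(\{\abs{z} \leq t\})$ and $\dim_G \frakF(A,t) = \mu_A(\{\abs{z} \geq t\})$ for all $t > 0$. More is true in the Haagerup--Schultz theory: the Brown measure of the compression of $A$ to $\frakE(A,t)$ equals the restriction of $\mu_A$ to the disk $\{\abs{z} \leq t\}$, and similarly for $\frakF(A,t)$ and the complementary region, because these subspaces are the ``hyperinvariant'' spectral subspaces realizing the Brown‐measure decomposition of $A$ into a $2\times 2$ upper‐triangular form whose diagonal blocks carry the appropriate parts of $\mu_A$ (see \cite[Theorem~1.1 and Section~7]{ar:HS09}). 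Applying this twice — once for $\frakE(A,\nu)$ to restrict to $\{\abs{z} \leq \nu\}$, then for the compression of that operator to its $\frakF$--subspace at level $\nu\inv$ to further restrict to $\{\abs{z} \geq \nu\inv\}$, and checking the two compressions are compatible so that the result is the compression to $\frakK_\nu = \frakE(A,\nu) \cap \frakF(A,\nu\inv)$ — yields $\mu_{A|_{\frakK_\nu}} = \mu_A|_{\{\nu\inv \leq \abs{z} \leq \nu\}}$.

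The main obstacle, then, is the compatibility of the two successive compressions: one must verify that $\frakE(A,\nu) \cap \frakF(A,\nu\inv)$ is itself an $A$--invariant subspace realizing the ``middle annulus'' block of the Brown decomposition, rather than merely controlling its dimension. This should follow from the fact that the Haagerup--Schultz subspaces are nested and hyperinvariant ($\frakE(A,s) \subseteq \frakE(A,t)$ for $s \leq t$, and $\frakF$ is the $\frakE$ of $A\inv$ by Remark~\ref{rem:invertible} when $A$ is invertible — and one may reduce to that case, or work directly with the decomposition theorem), so that $\frakE(A,\nu) = \frakK_\nu \oplus (\text{part with }\abs{z} < \nu\inv)$ as Hilbert--$G$--modules with $A$ block‐triangular; the Fuglede--Kadison determinant and Brown measure of the block‐triangular operator are the product / sum over diagonal blocks, isolating the $\frakK_\nu$ block. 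The only genuinely delicate point is that $A$ here is not assumed invertible, so one either restricts attention to the relevant region away from $0$ (which is where $\frakK_\nu$ lives, since $\nu > 1 > \nu\inv > 0$ and $\frakF(A,\nu\inv)$ excludes a neighborhood of $0$) or cites the non‐invertible version of the Haagerup--Schultz results directly. Once the decomposition is in place, the passage from the integral over $\CC$ to the integral over the annulus is immediate, completing the proof.
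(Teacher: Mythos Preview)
Your proposal is correct and follows essentially the same approach as the paper: apply Theorem~\ref{th:brown}\eqref{brown integral} to $A|_{\frakK_\nu}$, then use Haagerup--Schultz to identify $\mu_{A|_{\frakK_\nu}}$ with $\mu_A$ restricted to the annulus. The paper streamlines your two-step compression argument by writing $\mu_A = \mu_{A|_{\frakK_\nu}} + \mu_{PAP}$ (where $P$ is the projection onto $\frakK_\nu^\perp$) and invoking \cite[Main~Theorem~1.1]{ar:HS09} to get disjoint supports directly, which sidesteps your compatibility concern.
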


\begin{proof}
Using the function $h(z) = 1 - z^k$, by Theorem~\ref{th:brown}~\eqref{brown integral} we have:
\begin{equation*}
\log \detG \bigl(I - A^k\bigr)\big|_{\frakK_\nu} = \int_{\CC} \log \abs{1 - z^k} \, d\mu_{A|_{\frakK_\nu}}.
\end{equation*}
Let $P \from U \to U$ denote the projection to the orthogonal complement of $\frakK_\nu$.  Then $\mu_A = \mu_{A|_{\frakK_\nu}} + \mu_{PAP}$ (cf.~\cite[Remark~7.17]{ar:HS09}).  Let $\CC_\nu = \{z \in \CC \mid \nu\inv \leq \abs{z} \leq \nu\}$.  According to \cite[Main~Theorem~1.1]{ar:HS09}, we have $\supp(\mu_{A|_{\frakK_\nu}}) \subseteq \CC_\nu$ and $\supp(\mu_{PAP}) \subseteq \CC - \CC_\nu$.  Therefore we have:
\begin{align*}
\log \detG \bigl(I - A^k\bigr)\big|_{\frakK_\nu} & = \int_{\CC} \log \abs{1 - z^k} \, d\mu_{A|_{\frakK_\nu}} \\
&= \int_{\nu\inv \leq \abs{z} \leq \nu} \log \abs{1 - z^k} \, d\mu_{A|_{\frakK_\nu}} \\
&= \int_{\nu\inv \leq \abs{z} \leq \nu} \log \abs{1 - z^k} \, d\mu_{A}.\qedhere
\end{align*}
\end{proof}

%
%
%
%
%
%


\section{Dynamics on the quasi-fixed submodule}\label{sec:dynamics quasi-fixed}

Using the setting and notation from Section~\ref{subsec:relative 1-chains}, we define the following Hilbert--$\FF$--modules:
\medskip
\begin{center}
$C^{(2)}_1(\tGamma,\tH) = L^2(\FF) \otimes_{\QQ[\FF]} C_1(\tGamma,\tH;\QQ), \quad V_{\rm h}^{(2)} = L^2(\FF) \otimes_{\QQ[\FF]} V_{\rm h}$ \\[7.5pt] 
and  $V_{\rm qf}^{(2)} = L^2(\FF) \otimes_{\QQ[\FF]} V_{\rm qf}$.
\end{center}
\medskip
We have that $C_1^{(2)}(\tGamma,\tH) = V^{(2)}_{\rm h} + V^{(2)}_{\rm qf}$. The homomorphism $A_{f,H} \from C_1(\tGamma,\tH;\QQ) \to C_1(\tGamma,\tH,\QQ)$ extends to a bounded operator $A_{f,H} \from C_1^{(2)}(\tGamma,\tH) \to C_1^{(2)}(\tGamma,\tH)$.  We will use a fixed isomorphism $C^{(2)}_1(\tGamma,\tH) \cong L^2(\FF)^{n_H}$ using a basis as in Section~\ref{subsec:compute} and consider $V^{(2)}_{\rm h}$ and $V^{(2)}_{\rm qf}$ as submodules of $L^2(\FF)^{n_H}$.  

In this section we explore the dynamics of $A_{f,H}$ on $V_{\rm qf}^{(2)}$.  We remark that $A_{f,H}$ is not $\FF$--equivariant.  The main result is the following.

\begin{theorem}\label{th:quasi fixed dynamics}
Suppose that the homotopy equivalence $f \from \Gamma \to \Gamma$ satisfies the chain flare condition relative to the $f$--invariant graph $H \subset \Gamma$.  Then there is a constant $C > 0$ such that for any $\xi \in V_{\rm qf}^{(2)}$ and $k \geq 0$ we have $C\inv\norm{\xi} \leq \norm{A_{f,H}^{k}(\xi)} \leq C\norm{\xi}$.
\end{theorem}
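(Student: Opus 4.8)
The plan is to package $V_{\rm qf}^{(2)}$, together with the restriction of $A_{f,H}$ to it, as a single bounded operator out of $L^2(\FF)$, and then use the fact that on $L^2(\FF)$ the relevant dynamics becomes conjugate to something unitary. If $V_{\rm qf}=\{0\}$ there is nothing to prove, so assume otherwise and fix a Nielsen 1--chain $\rho$ generating $V_{\rm qf}$ as in \ref{cfh:3}; recall from Section~\ref{subsec:nielsen 1-chain} that $A_{f,H}(\rho)=\rho$. First I would define $T\from L^2(\FF)\to C^{(2)}_1(\tGamma,\tH)$ by $T(x)=x\rho$, initially for $x\in\QQ[\FF]$. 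Since $\rho$ has finite support, the function $g\mapsto\I{\rho,g\rho}$ is finitely supported, and a Cauchy--Schwarz estimate bounds $\norm{x\rho}$ by a fixed multiple of $\norm{x}$; so $T$ extends to a bounded $\FF$--equivariant operator with image dense in $V_{\rm qf}^{(2)}$. Because $\pi_H^\perp$ is $\FF$--equivariant one has the twisted equivariance $A_{f,H}(g\rho)=\Phi_f(g)A_{f,H}(\rho)=\Phi_f(g)\rho$, whence $A_{f,H}\circ T=T\circ\Phi_f$ on $\QQ[\FF]$ and hence, by continuity, everywhere; here $\Phi_f$ also denotes the \emph{unitary} operator on $L^2(\FF)$ induced by the automorphism $\Phi_f$. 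Iterating, $A_{f,H}^k\circ T=T\circ\Phi_f^k$ for all $k\geq 0$.

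The whole theorem would then follow from one claim: \emph{$T$ has closed image}, i.e.\ $\norm{Tx}\geq m\norm{x}$ for some $m>0$ and all $x$ in $W:=(\ker T)^\perp$. Indeed, $\ker T$ is closed, $\FF$--invariant and $\Phi_f$--invariant (if $Tx=0$ then $T\Phi_f x=A_{f,H}Tx=0$), so $\Phi_f$ restricts to a unitary of $W$. Any $\xi\in V_{\rm qf}^{(2)}\subseteq\overline{\QQ[\FF]\rho}=\img T$ equals $Tx$ for a unique $x\in W$, and then $m\norm{x}\leq\norm{\xi}\leq\norm{T}\norm{x}$, while $A_{f,H}^k(\xi)=T\Phi_f^k(x)$ with $\norm{\Phi_f^k(x)}=\norm{x}$ gives $m\norm{x}\leq\norm{A_{f,H}^k(\xi)}\leq\norm{T}\norm{x}$; combining the two chains of inequalities yields the estimate with $C=\norm{T}/m$.

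It remains to verify the claim, which is where the structure of a Nielsen 1--chain from Definition~\ref{def:nielsen 1-chain} enters and where the argument divides into the non-geometric and geometric cases. In the non-geometric case, let $e$ be the edge furnished by \ref{nnc:single}: since $\rho_{he}=0$ unless $h=1$, one reads off $(x\rho)_{ge}=\pm x_g$ for every $g\in\FF$, so $\norm{Tx}^2\geq\sum_{g\in\FF}\abs{(x\rho)_{ge}}^2=\norm{x}^2$; in particular $\ker T=0$ and $m=1$ works. In the geometric case, combining the single-overlap condition \ref{gnc:single} with the pair-counting condition \ref{gnc:pair} shows that every edge $e$ of $\tGamma-\tH$ lies in the support of exactly two translates $g_e\rho,g'_e\rho$ with $g_e\neq g'_e$, so $(x\rho)_e=\epsilon_e x_{g_e}+\epsilon'_e x_{g'_e}$ with $\epsilon_e,\epsilon'_e\in\{\pm1\}$. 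Thus $T$ is a (possibly signed) coboundary operator of the graph $\calG$ with vertex set $\FF$ carrying one edge joining $g_e$ to $g'_e$ for each unoriented edge $e$ of $\tGamma-\tH$; the $\FF$--action on $\tGamma$ makes $\calG$ carry a free, cocompact $\FF$--action with quotient a rose on $n_H$ petals. The components of $\calG$ are the $\FF$--translates of the component $\calG_0$ of the vertex $1$, on which the subgroup $\FF_0:=V(\calG_0)=\stab(\calG_0)$ acts freely and cocompactly, and by \ref{gnc:noncommuting} there are non-commuting elements $g_1,g_2\in\FF$ each sharing an edge with $\rho$, hence $g_1,g_2\in\FF_0$, so $\FF_0$ is non-abelian free, hence non-amenable. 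Decomposing $L^2(\FF)$ along the left cosets of $\FF_0$ and the target along the components of $\calG$, the $\FF$--equivariance of $T$ exhibits it as an orthogonal direct sum of isometric copies of the signed coboundary operator $\ell^2(V\calG_0)\to\ell^2(E\calG_0)$, whose associated combinatorial Laplacian is bounded below on the orthogonal complement of its kernel because $\calG_0$ is non-amenable (a Cheeger--Kesten type spectral gap); this produces a uniform $m>0$ and closes the image.

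I expect the last step to be the main obstacle: reducing the theorem to a genuine, uniform lower bound in the geometric case is tantamount to a spectral-gap statement for the combinatorial Laplacian of a graph carrying a free cocompact action of a non-amenable group, and it is precisely the defining conditions \ref{gnc:single}--\ref{gnc:noncommuting} of a geometric Nielsen 1--chain — above all the non-commutativity demanded in \ref{gnc:noncommuting} — that force the relevant subgroup $\FF_0$ to be non-amenable. Everything before that is routine: the continuity extensions defining $T$, the invariance properties of $\ker T$, and the coordinate computations underlying \ref{nnc:single} and \ref{gnc:pair}.
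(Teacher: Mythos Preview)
Your approach is correct and, in the geometric case, more economical than the paper's. Both routes reduce the theorem to a lower bound for a coboundary-type operator on the graph $T_\rho$ (your $\calG$), using that the stabiliser $\FF_0$ of a component is non-abelian free by \ref{gnc:noncommuting}. The paper, however, does not work with your $T$ directly: it first spends several lemmas (an orientability argument for cycles in $T_\rho$ and the resulting $\sign$ function) constructing a realisation map $R\from V_{\rm qf}\to C^0_c(T_\rho;\QQ)$ for which the exact identity $\norm{x}=\norm{\delta_0 R(x)}$ holds, thereby reducing to the spectral gap for the \emph{unsigned} $\delta_0$, which it then extracts from the Novikov--Shubin invariant. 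Your conjugacy $A_{f,H}\circ T=T\circ\Phi_f$ bypasses all of this: you only need $T$ itself to be bounded below, and since $T^*T=dI+A_\sigma$ with $d=\#\abs{\supp(\rho)}$ and $A_\sigma$ a signed adjacency operator on $\calG$, the elementary pointwise domination $\abs{(A_\sigma x)_g}\le (A\abs{x})_g$ gives $\norm{A_\sigma}\le\norm{A}<d$ on each non-amenable component, whence $T^*T\ge (d-\norm{A})I>0$. This is the content of your ``Cheeger--Kesten'' invocation, and it works regardless of whether the signing is balanced; in fact it shows $T$ is injective, so your argument also delivers the isomorphism $V_{\rm qf}^{(2)}\cong L^2(\FF)$ that the paper proves separately later by the same sign machinery.

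One small point to tighten: from $\Phi_f(\ker T)\subseteq\ker T$ alone you cannot conclude that $\Phi_f$ restricts to a unitary of $W=(\ker T)^\perp$; that would require the reverse inclusion as well. The issue is moot here because your lower bound already gives $\ker T=0$, so $W=L^2(\FF)$. You should simply state injectivity of $T$ outright rather than hedge with ``orthogonal complement of its kernel''.
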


The only item of the chain flare condition that is needed for Theorem~\ref{th:quasi fixed dynamics} is \ref{cfh:3}.  The estimate in Theorem~\ref{th:quasi fixed dynamics} is used in the next section in the proof of Theorem~\ref{th:intersection} to show the equality between $V^{(2)}_{\rm qf}$ and the intersection $\frakE(A_{f,H},\nu) \cap \frakF(A_{f,H},\nu\inv)$ for $\nu$ sufficient close to and greater than $1$ when $f$ satisfies the chain flare condition.  

There are two cases to consider based on whether or not the Nielsen 1--chain generating $V_{\rm qf}$ is non-geometric or geometric (as the theorem obviously holds when $V_{\rm qf} = \{0\}$).  These two cases are proved in Section~\ref{subsec:non-geometric} (Proposition~\ref{prop:quasi fixed dynamics nongeom}) and Section~\ref{subsec:geometric} (Proposition~\ref{prop:quasi fixed dynamics geom}) respectively.  The key idea in both sections is to bound $\norm{A^k_{f,H}(x)}$ for $x \in V_{\rm qf}$ and $k \geq 0$ in terms of the rational coefficients used to express $x$ as a linear combination of translates of $\rho$ independent of $k$.    


\subsection{The Non-Geometric Case}\label{subsec:non-geometric}

In this section, we assume $f \from \Gamma \to \Gamma$ is a homotopy equivalence, $H \subset \Gamma$ is a $f$--invariant subgraph and $\rho = \pi_H^\perp([u,v]) \in C_1(\tGamma,\tH;\QQ)$ is a Nielsen 1--chain that is non-geometric.  As previously stated, the idea is to bound the norm of $A_{f,H}^k(x)$ in terms of the rational coefficients expressing $x$ as a linear combination of the translates of $\rho$ independent of $k$.  In this case, condition~\ref{nnc:single} provides the existence of an edge that is in the support for only a single translate of $\rho$, which makes the calculation straightforward.

\begin{lemma}\label{lem:nongeom estimate}
There is a constant $B \geq 1$ such that if $x = q_1g_1\rho + \cdots + q_r g_r \rho \in V_{\rm qf}$ and $k \geq 0$, then:
\begin{equation*}
\sum_{j=1}^r q_j^2 \leq \norm{A_{f,H}^k(x)}^2 \leq B\sum_{j=1}^r q_j^2.
\end{equation*}
\end{lemma}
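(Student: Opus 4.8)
The plan is to exploit condition \ref{nnc:single}: there is a distinguished edge $e_0 \in \sfE(\tGamma) - \sfE(\tH)$ with $\rho_{e_0} = \pm 1$ and $\rho_{ge_0} = 0$ for all nontrivial $g \in \FF$. First I would observe that for $x = q_1 g_1 \rho + \cdots + q_r g_r\rho$ (with the $g_j$ distinct, after combining terms), the translates $g_j e_0$ are pairwise distinct since $g_j e_0 = g_{j'}e_0$ would force $g_{j'}^{-1}g_j$ to fix $e_0$, hence to be trivial. Moreover, for each $j$, the coefficient of $g_j e_0$ in $x$ is exactly $\pm q_j$, because $g_i\rho$ has nonzero coefficient on $g_j e_0$ only when $g_i^{-1}g_j$ fixes $e_0$, i.e. $i = j$. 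This immediately gives the lower bound $\sum_j q_j^2 \leq \norm{x}^2$. To propagate this through powers of $A_{f,H}$, I would use that $A_{f,H}(\rho) = \rho$ (the equality recorded after Definition~\ref{def:nielsen 1-chain}) together with the twisted equivariance $A_{f,H}(g\rho) = \Phi_f(g)\,\pi_H^\perp(A_f(\rho)) = \Phi_f(g)\rho$; hence $A_{f,H}^k(x) = q_1\Phi_f^k(g_1)\rho + \cdots + q_r\Phi_f^k(g_r)\rho$, which is again a linear combination of translates of $\rho$ with the \emph{same} coefficients $q_j$, and with the translating elements $\Phi_f^k(g_1),\ldots,\Phi_f^k(g_r)$ still pairwise distinct since $\Phi_f$ is an automorphism. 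Applying the previous paragraph's observation to this chain yields $\sum_j q_j^2 \leq \norm{A_{f,H}^k(x)}^2$ for every $k \geq 0$, uniformly.

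For the upper bound, I would bound $\norm{A_{f,H}^k(x)}^2 \leq \bigl(\sum_j |q_j|\,\norm{\Phi_f^k(g_j)\rho}\bigr)^2 = \norm{\rho}^2\bigl(\sum_j |q_j|\bigr)^2$, using that left translation is an isometry on $C_1(\tGamma;\QQ)$. By Cauchy--Schwarz, $\bigl(\sum_j |q_j|\bigr)^2 \leq r\sum_j q_j^2$, so this gives $\norm{A_{f,H}^k(x)}^2 \leq r\norm{\rho}^2 \sum_j q_j^2$. The obstacle here is that $r$ is not bounded: an arbitrary $x \in V_{\rm qf}$ can be a combination of arbitrarily many translates. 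To fix this, I would instead estimate the norm by counting overlaps edge-by-edge. For a fixed edge $e$, the coefficient of $e$ in $A_{f,H}^k(x)$ is $\sum_{j : e \in \supp(\Phi_f^k(g_j)\rho)} (\pm q_j)$; the number of indices $j$ contributing is the number of translates $h_j\rho$ (with $h_j = \Phi_f^k(g_j)$ distinct) whose support contains $e$. This is where I need a combinatorial bound: the multiplicity with which a single edge $e$ lies in the supports of distinct $\FF$--translates of $\rho$ is bounded by a constant $B_0$ depending only on $\rho$ (for a non-geometric Nielsen $1$--chain, this finiteness is what replaces \ref{gnc:pair}). Granting that, each squared coefficient is at most $B_0 \sum_{j : e \in \supp} q_j^2$, and summing over $e$ and using that each $g_j\rho$ (hence each $h_j\rho$) has at most $\ell := |\supp(\rho)|$ edges in its support, I get $\norm{A_{f,H}^k(x)}^2 \leq B_0\,\ell \sum_j q_j^2$. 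Setting $B = \max\{1, B_0\ell\}$ completes the proof.

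The main obstacle, then, is establishing the bounded-overlap constant $B_0$ for a non-geometric Nielsen $1$--chain: one must check that no edge of $\tGamma - \tH$ lies in the support of infinitely many distinct translates $g\rho$. I expect this to follow from the fact that $\rho = \pi_H^\perp([u,v])$ is supported on a single compact edge-path (the path from $u$ to $v$, with the part in $\tH$ deleted), so $e \in \supp(g\rho)$ constrains $g$ to lie in a finite set determined by how the compact path $[u,v]$ and its translate $g[u,v]$ can share the edge $e$ — the relevant elements $g$ are bounded in word length, hence finite in number. This is a finiteness statement about a locally finite tree and a compact subchain, and I would carry it out by a direct argument on edge-paths in $\tGamma$; none of it is deep, but it is the one place where the specific geometry of $\rho$ (rather than just $A_{f,H}(\rho) = \rho$) is used.
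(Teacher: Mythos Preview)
Your proof is correct and follows essentially the same approach as the paper: reduce to $k=0$ via $A_{f,H}^k(g\rho)=\Phi_f^k(g)\rho$, get the lower bound from the special edge of \ref{nnc:single}, and get the upper bound by counting how many translates $g\rho$ can cover a given edge. The one place you hesitate --- the bounded-overlap constant $B_0$ --- is in fact immediate and needs no argument about word length or compactness of $[u,v]$: since $\FF$ acts freely on $\sfE(\tGamma)$, the condition $e \in \supp(g\rho)$ is equivalent to $g^{-1}e \in \supp(\rho)$, and each of the $d := \#\abs{\supp(\rho)}$ edges of $\supp(\rho)$ determines at most one such $g$; hence $B_0 = d$. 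With this in hand your Cauchy--Schwarz estimate gives $B = d^2$, slightly sharper than the paper's $B = d^3$ (the paper instead bounds $x_e^2 \leq d^2\max\{q_j^2 : (g_j\rho)_e \neq 0\}$ and then observes each index $j$ can realize the maximum on at most $d$ edges).
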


\begin{proof}
Let $e_\rho \in \sfE(\tGamma) - \sfE(\tH)$ be an edge such that $\rho_{e_\rho} = \pm 1$ and $\rho_{ge_\rho} = 0$ for any non-trivial $g \in \FF$.  Such an edge exists by \ref{nnc:single}.   To simplify notation, we denote $A_{f,H}$ by $A$ in the proof.

As $A(\rho) = \rho$, for $k \geq 0$ we have $A^k(x) = q_1\Phi^k_f(g_1)\rho + \cdots + q_r\Phi^k_f(g_r)\rho$,
and hence it suffices to prove the lemma for $k = 0$ as the required bounds depend only on the rational coefficients and not the group elements determining the translates.

On one hand, observe that $x_{g_je_\rho} = \pm q_j$ since $(g_j\rho)_{g_je_\rho} = \pm  1$ and $(g\rho)_{g_je_\rho} = 0$ for all $g \in \FF$ not equal to $g_j$.  Thus:
\begin{equation*}
\sum_{j=1}^r q_j^2 = \sum_{j=1}^r x_{g_j e_\rho}^2 \leq \norm{x}^2.
\end{equation*}

On the other hand, letting $d = \#\abs{\supp(\rho)}$ we observe that for any edge $e \in \sfE(\tGamma) - \sfE(\tH)$, $(g\rho)_e = \rho_{g\inv e} \neq 0$ for at most $d$ elements $g \in \FF$.  Thus we find:
\begin{equation*}
x_e^2 = \left(\sum_{j=1}^r q_j(g_j\rho)_e\right)^2 \leq d^2\max\{q_j^2 \mid (g_j\rho)_e \neq 0 \}.
\end{equation*}  Additionally, each index $1 \leq j \leq r$ can realize the maximum value for at most $d$ edges as well.  Organizing the edges in $\supp(x)$ based on which index $j$ provides the maximal value on the given edge, we find:
\begin{equation*}
\norm{x}^2 = \sum_{e \in \supp(x)} x_e^2 \leq d^3\sum_{j=1}^r q_j^2.
\end{equation*}
Setting $B = d^3$ completes the proof. 
\end{proof}

With this estimate, we can prove Theorem~\ref{th:quasi fixed dynamics} when $V_{\rm qf}$ is generated by a non-geometric Nielsen 1--chain.

\begin{proposition}\label{prop:quasi fixed dynamics nongeom}
If $V_{\rm qf}$ is generated by a non-geometric Nielsen 1--chain, then there is a constant $C > 0$ such that for any $\xi \in V_{\rm qf}^{(2)}$ and $k \geq 0$ we have $C\inv\norm{\xi} \leq \norm{A_{f,H}^{k}(\xi)} \leq C\norm{\xi}$.
\end{proposition}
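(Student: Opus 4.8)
The plan is to transfer the purely combinatorial estimate of Lemma~\ref{lem:nongeom estimate} from the dense submodule $V_{\rm qf} \subset C_1(\tGamma,\tH;\QQ)$ to the Hilbert--$\FF$--module completion $V_{\rm qf}^{(2)}$, using the isometric identification $C_1^{(2)}(\tGamma,\tH) \cong L^2(\FF)^{n_H}$ and a density/continuity argument. First I would recall that by \ref{cfh:3} every $x \in V_{\rm qf}$ has the form $x = q_1 g_1 \rho + \cdots + q_r g_r \rho$, and, after grouping together any repeated group elements (noting that distinct $g_j$ give distinct translates $g_j\rho$ with the property guaranteed by \ref{nnc:single}), we may assume $g_1, \ldots, g_r$ are pairwise distinct. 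For such $x$, Lemma~\ref{lem:nongeom estimate} gives $\sum_j q_j^2 \le \norm{x}^2 \le B \sum_j q_j^2$; combined with $\norm{A^k(x)}^2$ lying between the same two quantities (again by Lemma~\ref{lem:nongeom estimate}, applied to the same $x$ and to $k$), this yields the two-sided bound
\begin{equation*}
B\inv \norm{x}^2 \le \norm{A_{f,H}^k(x)}^2 \le B \norm{x}^2 \quad \text{for all } x \in V_{\rm qf},\ k \ge 0.
\end{equation*}
So on the dense subspace $V_{\rm qf}$, the operator $A_{f,H}^k$ is uniformly (in $k$) bi-Lipschitz with constants depending only on $B$.

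Next I would pass to the completion. The point is that $A_{f,H}$ extends to a bounded operator on $C_1^{(2)}(\tGamma,\tH)$ (stated just before Theorem~\ref{th:quasi fixed dynamics}), so $A_{f,H}^k$ is bounded for each $k$, but a priori its operator norm could grow with $k$; the content here is precisely that the estimate above gives a $k$-independent bound $\norm{A_{f,H}^k|_{V_{\rm qf}}} \le \sqrt{B}$, and hence, since $V_{\rm qf}$ is dense in $V_{\rm qf}^{(2)}$ and $A_{f,H}$ preserves $V_{\rm qf}^{(2)}$, the upper bound $\norm{A_{f,H}^k(\xi)} \le \sqrt{B}\,\norm{\xi}$ extends by continuity to all $\xi \in V_{\rm qf}^{(2)}$. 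For the lower bound, the inequality $\sqrt{B}\inv\norm{x} \le \norm{A_{f,H}^k(x)}$ on the dense set $V_{\rm qf}$ passes to the closure in the standard way: given $\xi \in V_{\rm qf}^{(2)}$, take $x_m \in V_{\rm qf}$ with $x_m \to \xi$; then $A_{f,H}^k(x_m) \to A_{f,H}^k(\xi)$ by boundedness of $A_{f,H}^k$, and taking limits in $\sqrt{B}\inv\norm{x_m} \le \norm{A_{f,H}^k(x_m)}$ gives $\sqrt{B}\inv\norm{\xi} \le \norm{A_{f,H}^k(\xi)}$. Setting $C = \sqrt{B}$ finishes the proof.

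The routine part is the density argument; the one place that genuinely requires care — and which I expect to be the main (minor) obstacle — is making sure the reduction to pairwise-distinct $g_j$ is legitimate and that the hypothesis of Lemma~\ref{lem:nongeom estimate} is literally being applied to the same relative $1$--chain in both the $\norm{x}$ bound and the $\norm{A_{f,H}^k(x)}$ bound. In the proof of Lemma~\ref{lem:nongeom estimate} one already observes that $A^k(x) = q_1\Phi_f^k(g_1)\rho + \cdots + q_r\Phi_f^k(g_r)\rho$ and that $\Phi_f$ is an automorphism, so the group elements $\Phi_f^k(g_j)$ remain pairwise distinct; thus the lemma's bounds, which depend only on the $q_j$ and on $d = \#\abs{\supp(\rho)}$, apply verbatim to $A_{f,H}^k(x)$ as well. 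Once this bookkeeping is pinned down, both inequalities at the level of $V_{\rm qf}$ hold with the single constant $B$, and the extension to $V_{\rm qf}^{(2)}$ is immediate.
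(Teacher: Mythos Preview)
Your approach is essentially the paper's: apply Lemma~\ref{lem:nongeom estimate} twice to get the bi-Lipschitz bound on $V_{\rm qf}$ with constant $C=\sqrt{B}$, then extend to the completion. The paper does exactly this.

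There is one small but real slip in your density step. You assert that $V_{\rm qf}$ is dense in $V_{\rm qf}^{(2)}$ and then pass to limits. But $V_{\rm qf}$ consists of \emph{rational} (in particular real) $1$--chains, while $V_{\rm qf}^{(2)} = L^2(\FF)\otimes_{\QQ[\FF]} V_{\rm qf}$ is a \emph{complex} Hilbert space; the rational chains are not dense in it. The paper closes this gap by writing $\xi \in V_{\rm qf}^{(2)}$ as a limit of $x_1+ix_2$ with $x_1,x_2\in V_{\rm qf}$ and using that $A_{f,H}$ is a real operator, so that $\norm{A^k(x_1+ix_2)}^2=\norm{A^k(x_1)}^2+\norm{A^k(x_2)}^2$; the two-sided bound then transfers termwise. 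Your argument becomes correct once you insert this real/imaginary decomposition (or, equivalently, observe that the operator-norm bound on the real closure of $V_{\rm qf}$ automatically complexifies because $A_{f,H}$ commutes with conjugation). Everything else---the reduction to pairwise-distinct $g_j$, the observation that $\Phi_f^k$ preserves distinctness, and the uniformity in $k$---is fine and matches the paper.
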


\begin{proof}
Let $B \geq 1$ be the constant from Lemma~\ref{lem:nongeom estimate} and set $C = \sqrt{B}$.  To simplify notation, we denote $A_{f,H}$ by $A$ in the proof.

We first prove the proposition for $x \in V_{\rm qf}$.  Write $x = q_1g_1\rho + \cdots + q_r g_r \rho$ for some rational numbers $q_1,\ldots,q_r \in \QQ$ and elements $g_1,\ldots,g_r \in \FF$.  By Lemma~\ref{lem:nongeom estimate} we find for any $k \geq 0$:
\begin{align*}
\norm{A^k(x)}^2 &\leq B\sum_{j=1}^r q_j^2 \leq B\norm{x}^2, \mbox{ and} \\
\norm{x}^2 &\leq B\sum_{j=1}^r q_j^2 \leq B\norm{A^k(x)}^2.
\end{align*}
This proves the proposition for $x \in V_{\rm qf}$.

Now given $\xi \in V^{(2)}_{\rm qf}$, $k \geq 0$ and $\epsilon > 0$, there exists $x_1, x_2 \in V_{\rm qf}$ such that both of 
\begin{equation*}
\abs{\norm{\xi}^2 - \norm{x_1 + ix_2}^2}, \, \mbox{and } \abs{\norm{A^k(\xi)}^2 - \norm{A^k(x_1 + ix_2)}^2}
\end{equation*}
are less than $\epsilon$.  As $A$ is a real operator, we have $\norm{A^k(x_1 + ix_2)}^2 = \norm{A^k(x_1)}^2 + \norm{A^k(x_2)}^2$.  Therefore:
\begin{align*}
\norm{A^k(\xi)}^2 & \leq \norm{A^k(x_1 + ix_2)}^2 + \epsilon \\
&= \norm{A^k(x_1)}^2 + \norm{A^k(x_2)}^2 + \epsilon \\
& \leq B\norm{x_1}^2 + B\norm{x_2}^2 + \epsilon \\
& \leq B\norm{\xi}^2 + \epsilon(B + 1).
\end{align*}
Similarly:
\begin{align*}
\norm{\xi}^2 & \leq \norm{x_1 + ix_2}^2 + \epsilon \\
&\leq \norm{x_1}^2 + \norm{x_2}^2 + \epsilon \\
& \leq B\norm{A^k(x_1)}^2 + B\norm{A^k(x_2)}^2 + \epsilon \\
& \leq B\norm{A^k(\xi)}^2 + \epsilon(B + 1).
\end{align*}
As this holds for all $\epsilon > 0$, we have $C\inv\norm{\xi} \leq \norm{A^k(\xi)} \leq C\norm{\xi}$ as desired.
\end{proof}


\subsection{The Geometric Case}\label{subsec:geometric}

In this section, we assume $f \from \Gamma \to \Gamma$ is a homotopy equivalence, $H \subset \Gamma$ is a $f$--invariant subgraph and $\rho = \pi_H^\perp([u,v]) \in C_1(\tGamma,\tH;\QQ)$ is a Nielsen 1--chain that is geometric and that generates $V_{\rm qf}$.  Again, as previously stated, the key idea is to bound the norm of $A_{f,H}^k(x)$ in terms of the rational coefficients expressing $x$ as a linear combination of the translates of $\rho$ independent of $k$.  In this case, we will work with an auxiliary graph $T_\rho$ that captures the combinatorics of the translates of $\rho$.  The graph $T_\rho$ has a free action by $\FF$ and we consider the $\QQ[\FF]$--modules of compactly supported 0-- and 1--cochains $C_c^0(T_\rho;\QQ)$ and $C_c^1(T_\rho;\QQ)$ respectively.  Of importance is the coboundary operator $\delta_0 \from C_c^0(T_\rho;\QQ) \to C_c^1(T_\rho;\QQ)$ which is a $\QQ[\FF]$--module homomorphism given by:
\begin{equation*}
\delta_0\psi(\varepsilon) = \psi(\sft(\varepsilon)) - \psi(\sfo(\varepsilon)).
\end{equation*} 
where $\varepsilon \in \sfE(T_\rho)$.  We will consider the usual $L^2$--norms on both $C_c^0(T_\rho;\QQ)$ and $C_c^1(T_\rho;\QQ)$.  As our cochains are compactly support, these norms is well-defined.  

We will define a ``realization'' map $R \from V_{\rm qf} \to C_c^0(T_\rho;\QQ)$ and show that the sum of the squares of the rational coefficients of $x$ equals $\norm{R(x)}^2$ and also that $\norm{x}$ equals $\norm{\delta_0R(x)}$.  This takes place in Lemma~\ref{lem:realize qausi-fix}.  There is bi-Lipschitz relation between $\norm{\psi}$ and $\norm{\delta_0\psi}$ that we recall in Lemma~\ref{lem:coboundary estimate}.  This gives us the desired relation between  the sum of the squares of the rational coefficients of $x$ and $\norm{x}^2$ from which Proposition~\ref{prop:quasi fixed dynamics geom} follows in a similar way to Proposition~\ref{prop:quasi fixed dynamics nongeom}.

The graph $T_\rho$ is defined by the following data.
\begin{align*}
\sfV(T_\rho) &= \FF \\
\sfE(T_\rho) &= \{ [g_1,g_2] \mid \supp(g_1\rho) \cap \supp(g_2\rho) \neq \emptyset   \}
\end{align*}
The graph $T_\rho$ is not connected in general, but if $T_0$ and $T_1$ are components of $T_\rho$, then there is an element $g \in \FF$ such that $T_1 = gT_0$.  We note that there is a bijection between the edges of $T_\rho$ and the edges $e \in \sfE(\tGamma) - \sfE(\tH)$.  Indeed, \ref{gnc:single} implies the assignment that sends an edge $[g_1,g_2] \in \sfE(T_\rho)$ to $\supp(g_1\rho) \cap \supp(g_2\rho)$ defines a function from $\sfE(T_\rho) \to \sfE(\tGamma) - \sfE(\tH)$ and \ref{gnc:pair} implies this function is a bijection.

\begin{example}\label{ex:t rho}
Let $\Gamma$ be the theta graph labeled as in Figure~\ref{fig:theta}.  In this example, the homotopy equivalence $f \from \Gamma \to \Gamma$ and subgraph $H \subset \Gamma$ are irrelevant and will not be specified.     

\begin{figure}[ht]
\centering
\begin{tikzpicture}
\draw[very thick] (0,0) circle [x radius=1.8, y radius=1];
\draw[very thick] (-1.8,0) -- (1.8,0);
\fill (-1.8,0) circle [radius=0.075];
\fill (1.8,0) circle [radius=0.075];
\draw[thick] (0.05,1.1) -- (-0.05,1) -- (0.05,0.9);
\draw[thick] (0.05,-0.1) -- (-0.05,0) -- (0.05,0.1);
\draw[thick] (0.05,-1.1) -- (-0.05,-1) -- (0.05,-0.9);
\node at (0,1.3) {\footnotesize $a$};
\node at (0,0.3) {\footnotesize $b$};
\node at (0,-0.7) {\footnotesize $c$};
\node at (2.05,0) {\footnotesize $\ast$};
\end{tikzpicture}
\caption{The graph $\Gamma$ in Example~\ref{ex:t rho}.}\label{fig:theta}
\end{figure}

There is an isomorphism $\pi_{1}(\Gamma,*) \cong \FF = \I{x_{1},x_{2}}$ where $x_{1}$ corresponds to the edge-path $a\bar{b}$ and $x_{2}$ corresponds to th edge-path $c\bar{a}$.  Fix a lift $\tast$ of $\ast$ to $\tGamma$ that lies on the axes of $x_1$ and $x_2$.  We consider the edge-path from $\tast$ to $x_1x_2x_1\inv x_2\inv \tast$.  We fix lifts of $a$, $b$ and $c$ respectively in $\tGamma$ as pictured in Figure~\ref{fig:t rho} and abusing notation continue to denote them by $a$, $b$ and $c$ respectively.  Then the 1--chain:
\begin{equation*}
\rho = [\tast,x_1x_2x_1\inv x_2\inv \tast] = a - b + x_1c - x_1x_2 a + x_1x_2x_1\inv b - x_1x_2x_1\inv x_2\inv c  
\end{equation*}  
satisfies conditions \ref{gnc:single}, \ref{gnc:pair} and \ref{gnc:noncommuting}.  Indeed, \ref{gnc:single} is apparent since the edge-path $a\bar{b}c\bar{a}b\bar{c}$ in $\Gamma$ does not repeat a two-letter subword, \ref{gnc:pair} is apparent as $\supp(\rho)$ contains exactly two edges from each orbit of edges in $\tGamma$ and \ref{gnc:noncommuting} holds for $g_1 = x_1x_2$ and $g_2 = x_1x_2x_1\inv$.  The six translates of $\rho$ whose support has non-empty intersection with the support of $\rho$ are also illustrated in Figure~\ref{fig:rho}.  The corresponding portion of $T_\rho$ is illustrated in Figure~\ref{fig:t rho}.  
   
\begin{figure}[ht]
\centering
\begin{tikzpicture}
\def\s{1.1}
\draw[very thick,blue] (0,0) -- (12*\s,0);
\fill[blue] (0,0) circle [radius=0.075];
\fill[blue] (12*\s,0) circle [radius=0.075];
\foreach \a in {1,...,5} {
	\fill (0 + \a*2*\s,0) circle [radius=0.075];
}
\foreach \a in {0,2,4} {
	\draw[thick,blue] (\s-0.1 + 2*\a*\s,-0.1) -- (\s + 2*\a*\s,0) -- (\s-0.1 + 2*\a*\s,0.1);
	\draw[thick,blue] (3*\s+0.1 + 2*\a*\s,-0.1) -- (3*\s + 2*\a*\s,0) -- (3*\s+0.1 + 2*\a*\s,0.1);
}
\draw[very thick] (0,0) -- (0,-2);
\draw[thick] (-0.1,-1.1) -- (0,-1) -- (0.1,-1.1);
\draw[very thick] (2*\s,2) -- (2*\s,0) (6*\s,-2) -- (6*\s,0) (10*\s,2) -- (10*\s,0);
\draw[thick] (2*\s-0.1,1.1) -- (2*\s,1) -- (2*\s+0.1,1.1) (6*\s-0.1,-1.1) -- (6*\s,-1) -- (6*\s+0.1,-1.1) (10*\s-0.1,1.1) -- (10*\s,1) -- (10*\s+0.1,1.1);
\fill (0,-2) circle [radius=0.075];
\fill (2*\s,2) circle [radius=0.075];
\fill (6*\s,-2) circle [radius=0.075];
\fill (10*\s,2) circle [radius=0.075];
\draw[very thick, red] (1.3*\s,3) -- (2*\s-0.2,2) -- (2*\s-0.2,0.25) -- (0.2,0.25) -- (0.2,4);
\draw[thick,red] (0.1,1.4) -- (0.2,1.5) -- (0.3,1.4);
\fill[red] (1.3*\s,3) circle [radius=0.075];
\fill[red] (0.2,4) circle [radius=0.075];
\draw[very thick, red] (4*\s-0.2,4) -- (4*\s-0.2,0.25) -- (2*\s+0.2,0.25) -- (2*\s+0.2,2);
\draw[thick,red] (4*\s-0.3,1.6) -- (4*\s-0.2,1.5) -- (4*\s-0.1,1.6);
\fill[red] (4*\s-0.2,4) circle [radius=0.075];
\fill[red] (2*\s+0.2,2) circle [radius=0.075];
\draw[very thick, red] (4*\s+0.2,-4) -- (6*\s-0.2,-2) -- (6*\s-0.2,-0.25) -- (4*\s,-0.25); 
\draw[thick,red] (5*\s-0.1414,-3) -- (5*\s,-3) -- (5*\s,-3.1414);
\fill[red] (4*\s+0.2,-4) circle [radius=0.075];
\fill[red] (4*\s,-0.25) circle [radius=0.075];
\draw[very thick, red] (8*\s,-0.25) -- (6*\s+0.2,-0.25) -- (6*\s+0.2,-2) -- (8*\s-0.2,-4); 
\draw[thick,red] (7*\s-0.1414,-3) -- (7*\s,-3) -- (7*\s,-2.8586);
\fill[red] (8*\s,-.25) circle [radius=0.075];
\fill[red] (8*\s-0.2,-4) circle [radius=0.075];
\draw[very thick, red] (10*\s-0.2,2) -- (10*\s-0.2,0.25) -- (8*\s+0.2,0.25) -- (8*\s+0.2,4);
\draw[thick,red] (8*\s+0.1,1.4) -- (8*\s+0.2,1.5) -- (8*\s+0.3,1.4);
\fill[red] (10*\s-0.2,2) circle [radius=0.075];
\fill[red] (8*\s+0.2,4) circle [radius=0.075];
\draw[very thick, red] (12*\s-0.2,4) -- (12*\s-0.2,0.25) -- (10*\s+0.2,0.25) -- (10*\s+0.2,2) -- (10.7*\s,3); 
\draw[thick,red] (12*\s-0.1,1.6) -- (12*\s-0.2,1.5) -- (12*\s-0.3,1.6);
\fill[red] (12*\s-0.2,4) circle [radius=0.075];
\fill[red] (10.7*\s,3) circle [radius=0.075];
\node at (-0.5,0) {\footnotesize $\rho\from \tast$};
\node at (12*\s+1.4,0) {\footnotesize $x_1x_2x_1\inv x_2\inv\tast$};
\node at (\s,-0.4) {\footnotesize $a$};
\node at (3*\s,-0.4) {\footnotesize $b$};
\node at (0.3,-1.1) {\footnotesize $c$};
\node at (5*\s,0.4) {\footnotesize $x_1c$};
\node at (7*\s,0.4) {\footnotesize $x_1x_2a$};
\node at (9*\s,-0.4) {\footnotesize $x_1x_2x_1\inv b$};
\node at (11*\s,-0.4) {\footnotesize $x_1x_2x_1\inv x_2\inv c$};
\node at (2*\s+0.7,1) {\footnotesize $x_2\inv c$};
\node at (6*\s-0.75,-1) {\footnotesize $x_1x_2b$};
\node at (10*\s-1,1) {\footnotesize $x_1x_2x_1\inv a$};
\node at (1.3*\s,3.4) {\footnotesize $x_2\inv x_1\inv \rho$};
\node at (4*\s-0.2,4.4) {\footnotesize $x_1x_2\inv x_1\inv \rho$};
\node at (4*\s-1.3,-4) {\footnotesize $x_1x_2x_1x_2\inv x_1\inv \rho$};
\node at (8*\s-0.2,-0.7) {\footnotesize $x_1x_2\rho$};
\node at (10*\s-0.75,2.4) {\footnotesize $x_1x_2 x_1\inv \rho$};
\node at (12*\s-0.2,4.4) {\footnotesize $x_1x_2x_1\inv x_2\inv x_1\inv\rho$};
\end{tikzpicture}
\caption{The Nielsen 1--chain $\rho$ in Example~\ref{ex:t rho} and the six translates whose support intersects the support of $\rho$.}\label{fig:rho}
\end{figure}
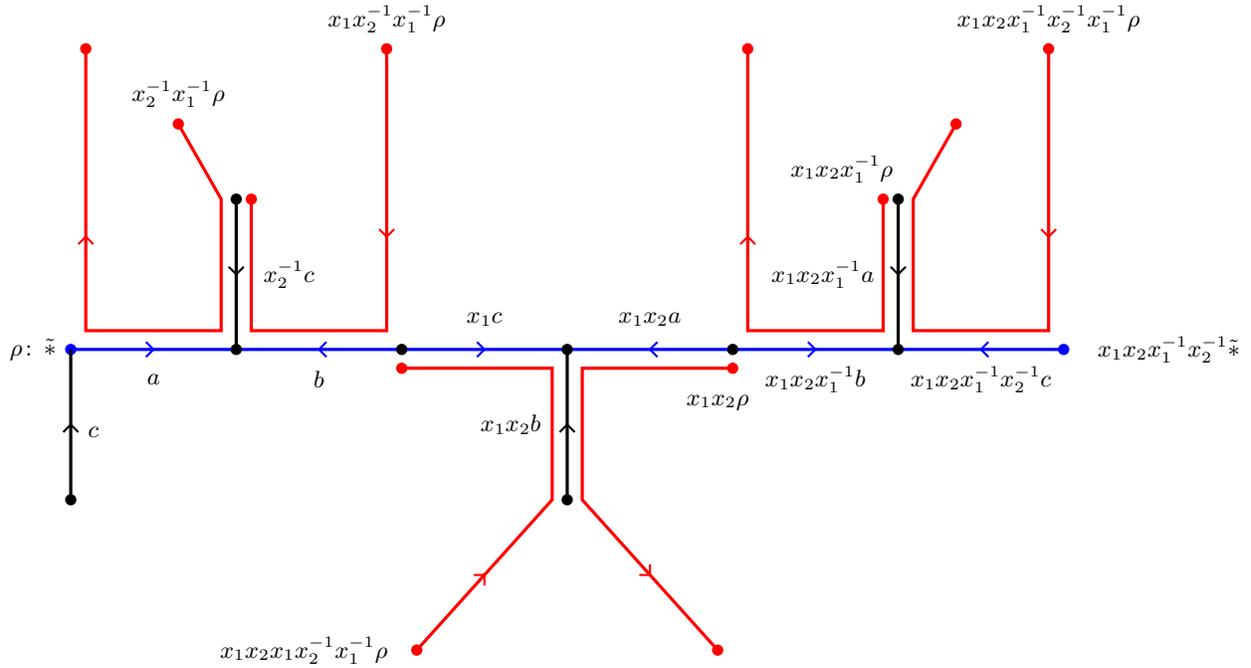

\begin{figure}[ht]
\centering
\begin{tikzpicture}
\node[regular polygon, regular polygon sides=6,minimum size=5cm](P) at (0,0) {};
\fill (0,0) circle [radius=0.075];
\foreach \a in {1,...,6} {
	\fill (P.corner \a) circle [radius=0.075];
}
\foreach \a in {1,3,5} {
	\pgfmathsetmacro\b{int(\a + 1)};
	\draw[very thick] (P.corner \a) -- (P.corner \b) -- (0,0) -- cycle;
}
\node at (0,-0.5) {\footnotesize $1$};
\node[above] at (P.corner 2) {\footnotesize $x_2\inv x_1\inv$};
\node[above] at (P.corner 1) {\footnotesize $x_1x_2\inv x_1\inv$};
\node[right] at (P.corner 6) {\footnotesize $x_1x_2x_1x_2\inv x_1\inv$};
\node[below] at (P.corner 5) {\footnotesize $\phantom{\inv}x_1x_2\phantom{\inv}$};
\node[below] at (P.corner 4) {\footnotesize $x_1x_2x_1\inv$};
\node[left] at (P.corner 3) {\footnotesize $x_1x_2x_1\inv x_2\inv x_1\inv$};
\end{tikzpicture}
\caption{A portion of the graph $T_\rho$ in Example~\ref{ex:t rho}.}\label{fig:t rho}
\end{figure}
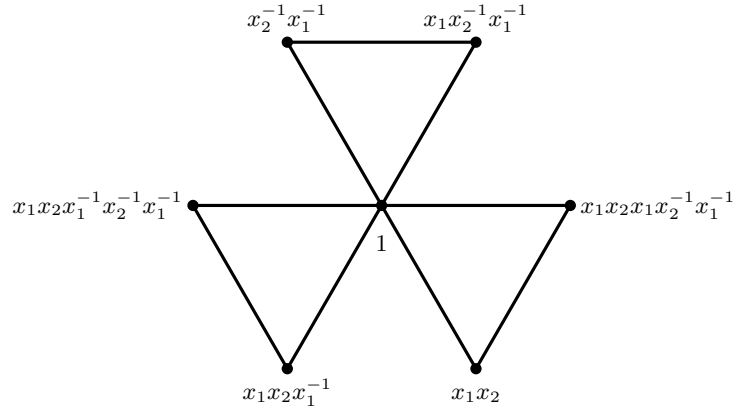
\end{example}

There is an obvious map $R' \from V_{\rm qf} \to C_c^0(T_\rho;\QQ)$ that sends a 1--chain $x = q_1g_1\rho + \cdots + q_rg_r\rho \in V_{\rm qf}$ to the function $R'(x) = q_1\chi_{g_1} + \cdots + q_r\chi_{g_r}$ where $\chi_g$ is the characteristic function of the set $\{g\} \subset \sfV(T_\rho)$. It is clearly true that $\sum_{j=1}^r q_j^2 = \norm{R'(x)}^2$.  Additionally, for the Nielsen 1--chain as in Example~\ref{ex:t rho}, we can demonstrate that $\norm{x} = \norm{\delta_0R'(x)}$.  Indeed, if $e$ is the unique edge in $\supp(g\rho) \cap \supp(g'\rho)$, then, up to sign, the coefficient $x_e$ is equal to $\sum_{j=1}^r q_j(\chi_{g'}(g_j) - \chi_{g}(g_j))$.  (Notice that there are at most two nonzero terms in the sum.)  This follows as for the Nielsen 1--chain in Example~\ref{ex:t rho}, for any edge, the coefficients of its translates in $\supp(\rho)$ are $1$ and $-1$.  Next, if $\varepsilon \in \sfE(T_\rho)$ is the edge corresponding to $\supp(g\rho) \cap \supp(g'\rho)$, then up to swapping the orientation of $\varepsilon$ we have $\sfo(\varepsilon) = g$ and $\sft(\varepsilon) = g'$, hence $\delta_0R'(x)(\varepsilon)$ is equal to $\sum_{j=1}^r q_j(\chi_{g_j}(g') - \chi_{g_j}(g))$.  (Again, there are at most two nonzero terms.) This shows that $\norm{\delta_0R'(x)} = \norm{x}$ as claimed since $\chi_{g'}(g) = \chi_g(g')$ for any $g,g' \in \FF$.

In general though, it is not the case that the coefficients of the translates of any edge in $\supp(\rho)$ are $1$ and $-1$ and so we need to take such edges into account when defining the realization map $R \from V_{\rm qf} \to C_c^0(T_\rho;\QQ)$.  We call an edge $e \in \sfE(\tGamma) - \sfE(\tH)$ \emph{non-orientable} if the coefficients of its translates in $\supp(\rho)$ have the same sign, in which case they are either are both $1$ or either both $-1$.  Likewise, we call an edge $\varepsilon \in \sfE(T_\rho)$ non-orientable is the corresponding edge in $\sfE(\tGamma) - \sfE(\tH)$ is non-orientable.  In this language, there are no non-orientable edges in Example~\ref{ex:t rho}.  

If the unique edge in $\supp(g_j\rho) \cap \supp(g_k\rho)$ is non-orientable, to get an equality $\norm{x} = \norm{\delta_0R(x)}$, we need to make sure that we have $(R(x)(g_k) - R(x)(g_j))^2 = (q_k + q_j)^2$ and so one of the terms in the expression for $R(x)$ needs to be multiplied by $-1$.  One way to accomplish this is the following.  Fix a vertex $g_0 \in \sfV(T_\rho)$, let $m$ denote the number of non-orientable edges in an edge-path from $g_0$ to the vertex $g_j \in \sfV(T_\rho)$ and use $(-1)^mq_j\chi_{g_j}$ in the summation formula for $R(x)$.  If $T_\rho$ is a tree, then this construction is obviously well-defined.  However $T_\rho$ is not a tree in general.  Nonetheless, we can still show that the parity of the number of non-orientable edges in an edge path is well-defined.  

To this end, suppose $\bp \from p_0,\ldots,p_m$ is an edge-path in $T_\rho$ where $p_0 = p_m$.  Recall from Section~\ref{subsec:graphs morphisms} that notation this means the edge-path $\bp$ visits the vertices $p_0,\ldots,p_m$ in this order.  This unambiguously defines an edge-path since $T_\rho$ is a simplicial graph.  To simplify the exposition, the index $j$ in the remainder of this description is considered modulo $m$.  Let $g_0,\ldots,g_{m-1} \in \FF$ be the elements such that $p_j = g_j$ for $0 \leq j < m$.  For each $0 \leq j < m$, the intersection $\supp(g_{j-1}\rho) \cap \supp(g_j\rho)$ consists of a single edge $e_j \in \sfE(\tGamma) - \sfE(\tH)$.  Thus we have $e_j,e_{j+1} \in \supp(g_j\rho)$ for $0 \leq j < m$.  An \emph{orientation} on $\bp$ is a choice of orientations on the edges $e_j$ so that $(g_j\rho)_{e_j} = -(g_j\rho)_{e_{j+1}}$ for $0 \leq j < m$.  Note that the orientations on the edges are not assumed to be $\FF$--equivariant.  

\begin{lemma}\label{lem:orientable}
Suppose $\bp \from p_0,\ldots,p_m$ is an edge-path in $T_\rho$ where $p_0 = p_m$.  Then either $p_j = p_k$ for some distinct indices $0 \leq j,k < m$ or there exists an orientation on $\bp$. 
\end{lemma}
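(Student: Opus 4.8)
The plan is to turn the existence of an orientation into a parity count and then prove that count is even by inducting on the length of the cycle, with triangles as the base case. Throughout write $A=A_{f,H}$; given an edge-path $\bp\from p_0,\dots,p_m$ with $p_0=p_m$, set $g_j\in\FF$ with $p_j=g_j$ and let $e_j\in\sfE(\tGamma)-\sfE(\tH)$ be the unique edge of $\supp(g_{j-1}\rho)\cap\supp(g_j\rho)$, indices taken mod $m$. If some $p_j=p_k$ with $j\ne k$ there is nothing to prove, so I would assume $p_0,\dots,p_{m-1}$ are distinct; one may also take $m\ge 3$ (backtracks are removable without affecting the parity below, and a genuine closed edge-path of length $2$ has $e_0=e_1$). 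Fix once and for all an orientation on each edge $e_j$ and put $a_j=(g_j\rho)_{e_j}$, $b_j=(g_{j-1}\rho)_{e_j}$, so $a_j,b_j\in\{\pm1\}$ and $b_{j+1}=(g_j\rho)_{e_{j+1}}$. Writing a prospective orientation of $\bp$ as $\sigma_je_j$ with $\sigma_j\in\{\pm1\}$, the defining equalities $(g_j\rho)_{\sigma_je_j}=-(g_j\rho)_{\sigma_{j+1}e_{j+1}}$ become the recursion $\sigma_{j+1}=-\sigma_ja_jb_{j+1}$, which closes up consistently around the cycle iff $(-1)^m\prod_j a_jb_{j+1}=1$. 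Since (reindexing cyclically) $\prod_j a_jb_{j+1}=\prod_j a_jb_j$ and $a_jb_j=+1$ precisely when $e_j$ is non-orientable, consistency holds iff the number of non-orientable edges among $e_0,\dots,e_{m-1}$ is even. Two remarks: by \ref{gnc:pair} the pair of translates whose supports meet in a given edge is unique, so the port edges $e_0,\dots,e_{m-1}$ of a simple cycle are pairwise distinct (hence the orientation is well posed); and whether an edge of $T_\rho$ is non-orientable depends only on the edge of $\Gamma-H$ underneath it. Thus the lemma reduces to: every simple cycle in $T_\rho$ passes through an even number of non-orientable edges.

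Next I would observe that $T_\rho$ is chordal and induct on $m$. Collapsing each component of $\tH$ to a point turns $\tGamma$ into a tree $\overline{\tGamma}$ and each arc $g\rho$ into an embedded subarc $\overline{g\rho}\subseteq\overline{\tGamma}$ (its intersection with a component of $\tH$ is a subarc, so the collapse preserves embeddedness), and $\supp(g\rho)\cap\supp(g'\rho)\ne\emptyset$ iff $\overline{g\rho}$ and $\overline{g'\rho}$ share an edge of $\overline{\tGamma}$, which by \ref{gnc:single} is then a single edge. Using the Nielsen-chain axioms one checks that $T_\rho$ has no chordless cycle of length $\ge 4$. Granting this: if $m\ge 4$, a chord of $\bp$ joins $p_j$ to $p_k$ and splits $\bp$ into two strictly shorter simple cycles $\bp'$, $\bp''$ whose edge sets meet exactly in that chord; since non-orientability is intrinsic to an edge, the non-orientable count of $\bp$ equals that of $\bp'$ plus that of $\bp''$ minus twice the chord's contribution, so modulo $2$ it is a sum of two even numbers and the inductive hypothesis applies. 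Hence it remains to treat $m=3$.

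For the triangle, the three subarcs $\overline{g_0\rho},\overline{g_1\rho},\overline{g_2\rho}$ meet pairwise — each pair in the single collapsed port edge $\overline{e_j}$ — so by the Helly property for subtrees of a tree they share a vertex $\bar v$ of $\overline{\tGamma}$, which must be a common endpoint of the three distinct edges $\overline{e_0},\overline{e_1},\overline{e_2}$. Each $\overline{g_j\rho}$ is an embedded arc through $\bar v$, so it enters $\bar v$ along one of its two port edges and leaves along the other, i.e.\ it traverses them in opposite directions relative to a fixed ``toward $\bar v$'' orientation. Letting $\alpha_j,\beta_j\in\{\pm1\}$ record the directions in which the two arcs through $e_j$ traverse it, one has $\beta_0=-\alpha_1$, $\beta_1=-\alpha_2$, $\beta_2=-\alpha_0$, while $e_j$ is non-orientable iff $\alpha_j=\beta_j$; therefore $\prod_{j=0}^{2}(-\alpha_j\beta_j)=-(\alpha_0\alpha_1\alpha_2)(\beta_0\beta_1\beta_2)=-(\alpha_0\alpha_1\alpha_2)(-\alpha_0\alpha_1\alpha_2)=1$, so an even number of $e_0,e_1,e_2$ are non-orientable. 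This completes the induction, and with the first paragraph the lemma.

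The main obstacle is the chordality of $T_\rho$ used above. Realizing $T_\rho$ only as an intersection graph of subtrees of $\overline{\tGamma}$ yields a chordal \emph{over}graph, one in which an edge is recorded whenever two subarcs share a point; but in $T_\rho$ an edge requires a shared \emph{edge}, and a spanning subgraph of a chordal graph need not be chordal (four arcs in a star can meet pairwise along the spokes while non-consecutive ones meet only at the centre, producing a chordless square). Excluding such configurations is exactly where \ref{gnc:single}, \ref{gnc:pair}, and the fact that $\rho=\pi_H^\perp([u,v])$ comes from an honest reduced arc $[u,v]$ all have to be used; this is the delicate part of the argument. Everything downstream of chordality — the recursion of the first paragraph and the Helly sign computation for triangles — is routine.
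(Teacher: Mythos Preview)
Your reduction of orientability to a parity condition is correct: a simple closed edge-path $\bp$ admits an orientation iff it traverses an even number of non-orientable edges, i.e., iff $(-1)^{\sigma(\bp)}=1$. The triangle case is also sound: the pairwise intersections $\overline{g_i\rho}\cap\overline{g_j\rho}$ are single closed edges of the tree $\overline{\tGamma}$, so the Helly point must be a common endpoint of all three port edges, and the sign computation goes through.

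The genuine gap is chordality of $T_\rho$, which you flag but do not prove. Edge-intersection graphs of paths in a tree are not chordal in general, and although \ref{gnc:single}, \ref{gnc:pair}, and the fact that the arcs are $\FF$-translates of a single arc under a free action do rule out the most naive counterexamples (for instance, four length-two arcs around a star cannot all be translates of one of them, since any element carrying one to an adjacent one would fix the centre), you have not shown that these hypotheses exclude every chordless cycle of length $\ge 4$. Without chordality the induction has no engine and the argument does not close. Note too that the paper later proves $(-1)^{\sigma(\bp)}=1$ for every closed $\bp$ (Lemma~\ref{lem:sigma well-defined}) \emph{using} the present lemma, so appealing forward would be circular.

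The paper's proof takes a completely different route and avoids any global structural claim about $T_\rho$: it works directly in the tree $\tGamma$. For each $j$ one orients the pair $e_j,e_{j+1}$ to point away from each other along the arc $g_j[u,v]$. If the two local orientations on some $e_j$ disagree, then $e_j$ lies on the geodesic in $\tGamma$ from $e_{j-1}$ to $e_{j+1}$, hence on the bridge between the arcs $g_{j-2}[u,v]$ and $g_{j+1}[u,v]$ (since $e_j$ lies in neither of those arcs). But the connected union $\bigcup_{r\ne j-1,j} g_r[u,v]$ contains both of those arcs, hence the bridge, hence $e_j$; thus $e_j\in\supp(g_k\rho)$ for some $k\ne j-1,j$, and \ref{gnc:pair} forces $g_k\in\{g_{j-1},g_j\}$, yielding a repeated vertex. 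No chordality is needed.
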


\begin{proof}
To simplify the exposition, subscript indices are considered modulo $m$ in this proof.

There are elements $g_0,\ldots,g_{m-1} \in \FF$ such that $p_j = g_j$ for $0 \leq j < m$.  For each $0 \leq j < m$, the intersection $\supp(g_{j-1}\rho) \cap \supp(g_j\rho)$ consists of a single edge $e_j \in \sfE(\tGamma) - \sfE(\tH)$. We have $e_j,e_{j+1} \in \supp(g_j\rho)$ for $0 \leq j < m$.  If $e_j = e_{k}$ for some distinct indices $0 \leq j, k < m$, then by \ref{gnc:pair}, the pair of elements $\{g_{j-1},g_j\}$ equals the pair of elements $\{g_{k-1},g_k\}$.  If $k \equiv j+1 \pmod{m}$, then the lemma holds as either $p_k = p_{j}$ or $p_{k} = p_{j-1}$.  Else, we see that the lemma holds as either $p_j = p_{k-1}$ or $p_j = p_k$.  

Therefore, we can assume that the set of edges $e_0,\ldots,e_m$ are distinct for the remainder of the proof.  

For each $0 \leq j < m$, we have that the pair of edges $e_j,e_{j+1}$ belong to the edge-path $g_j[u,v] \subset \tGamma$.  We locally orient $e_j$ and $e_{j+1}$ to point away from each other.  In other words, there is a component of $\tGamma - \{\sft(e_j),\sft(e_{j+1})\}$ that contains both $\sfo(e_j)$ and $\sfo(e_{j+1})$.

We claim that if these local choices are not consistent on $\bp$ then $p_j = p_k$ for some distinct indices $0 \leq j,k < m$.  Clearly, if these choices are consistent then $(g_j\rho)_{e_j} = -(g_j\rho)_{e_{j+1}}$ for all $0 \leq j <m$ and so they determine an orientation on $\bp$. 

If these local choices are not consistent, then there is some index $0 \leq j < m$ such that the local orientation on $e_j$ induced from $g_{j-1}[u,v]$ does not equal the local orientation on $e_j$ induced from $g_j[u,v]$.   This implies that the edge $e_j$ lies on the edge path from $e_{j-1}$ to $e_{j+1}$.  Indeed, let $w$ be midpoint of the edge $e_j$, let $w_{j-1}$ denote the initial vertex of $e_j$ in the orientation induced from $g_{j-1}[u,v]$ and let $w_j$ denote the initial vertex of $e_j$ in the orientation induced from $g_j[u,v]$.  Then the edge-path from $e_{j-1}$ to $w$ goes through $w_{j-1}$ and the edge-path from $e_{j+1}$ to $w$ goes through $w_j$.  As $w_{j-1}$ and $w_j$ are distinct, this shows that the edge-path from $e_{j-1}$ to $e_{j+1}$ contains $w$ and hence also $e_j$ as claimed.

Let $\bq$ be the edge path in $\tGamma$ from $g_{j-2}[u,v]$ to $g_{j+1}[u,v]$.  This edge path contains $e_j$.  This follows from \ref{gnc:single} as $e_{j-1} \subset g_{j-2}[u,v]$, $e_{j+1} \subset g_{j+1}[u,v]$ and $e_j$ is contained in neither $g_{j-2}[u,v]$ nor $g_{j+1}[u,v]$.  See Figure~\ref{fig:orientation}.  

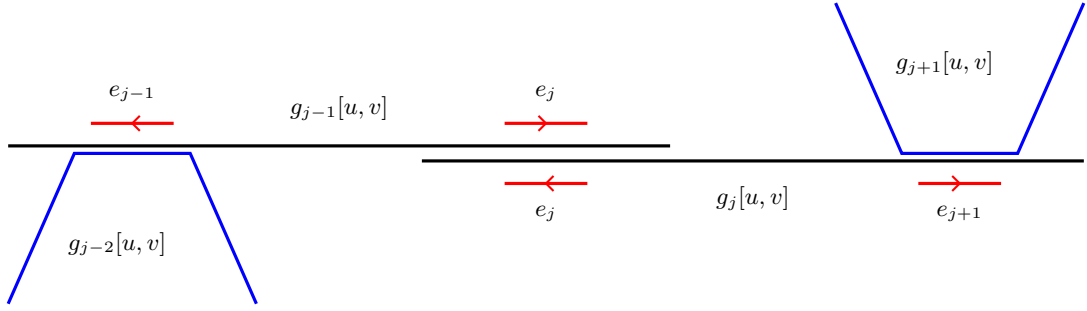
\begin{figure}[ht]
\centering
\begin{tikzpicture}
\def\s{1.1}
\draw[very thick] (0,0.1) -- (8*\s,0.1) (5*\s,-0.1) -- (13*\s,-0.1);
\draw[very thick, red] (1*\s,0.4) -- (2*\s,0.4) (6*\s,0.4) -- (7*\s,0.4) (6*\s,-0.4) -- (7*\s,-0.4) (11*\s,-0.4) -- (12*\s,-0.4);
\draw[thick,red] (1.5*\s+0.1,0.5) -- (1.5*\s,0.4) -- (1.5*\s+0.1,0.3);
\draw[thick,red] (6.5*\s-0.1,0.5) -- (6.5*\s,0.4) -- (6.5*\s-0.1,0.3);
\draw[thick,red] (6.5*\s+0.1,-0.5) -- (6.5*\s,-0.4) -- (6.5*\s+0.1,-0.3);
\draw[thick,red] (11.5*\s-0.1,-0.5) -- (11.5*\s,-0.4) -- (11.5*\s-0.1,-0.3);
\draw[very thick, blue] (0,-2) -- (0.8*\s,0) -- (2.2*\s,0) -- (3*\s,-2);
\draw[very thick, blue] (10*\s,2) -- (10.8*\s,0) -- (12.2*\s,0) -- (13*\s,2);
\node at (1.5*\s,0.8) {\footnotesize $e_{j-1}$};
\node at (6.5*\s,0.8) {\footnotesize $e_{j}$};
\node at (6.5*\s,-0.8) {\footnotesize $e_{j}$};
\node at (11.5*\s,-0.8) {\footnotesize $e_{j+1}$};
\node at (1.5*\s-0.2,-1.2) {\footnotesize $g_{j-2}[u,v]$};
\node at (4*\s,0.6) {\footnotesize $g_{j-1}[u,v]$};
\node at (9*\s,-0.6) {\footnotesize $g_{j}[u,v]$};
\node at (11.5*\s-0.2,1.2) {\footnotesize $g_{j+1}[u,v]$};
\end{tikzpicture}
\caption{Inconsistent local orientations on the edge $e_j$ in the proof of Lemma~\ref{lem:orientable}.}\label{fig:orientation}
\end{figure}

The union $\bigcup_{r = j+1}^{j-2} g_r[u,v]$ is connected as $g_{r}[u,v] \cap g_{r+1}[u,v]$ is nonempty for all $0 \leq r < m$.  Hence, this union also contains the edge-path $\bq$ and thus the edge $e_j$.  This implies that $e_j \in \supp(g_k\rho)$ for some $k \neq j,j+1$.  By \ref{gnc:pair}, we must have that $g_k = g_j$ or $g_k = g_{j+1}$.  This shows that two vertices of $\bp$ are the same and completes the proof.  
\end{proof}

Motivated by the previous discussion, we introduce the following notion.  Let $\bp\from p_0, \ldots, p_m$ be an edge-path in $T_\rho$ and let $g_0,\ldots,g_m \in \FF$ be the elements such that $p_j = g_j$ for $0 \leq j \leq m$.  For each $1 \leq j \leq m$, the intersection $\supp(g_{j-1}\rho) \cap \supp(g_j\rho)$ consists of a single edge $e_j \in \sfE(\tGamma) - \sfE(\tH)$.  We have $(g_{j-1}\rho)_{e_j}, (g_j\rho)_{e_j} \in \{-1, 1\}$.  We set $\sigma(\bp)$ to be equal to the number of indices $1 \leq j \leq m$ such that $(g_{j-1}\rho)_{e_j} = (g_j\rho)_{e_j}$.  In other words, $\sigma(\bp)$ is the number of non-orientable edges along the edge-path $\bp$.  In particular, we have:
\begin{equation}\label{eq:sigma product formula}
(-1)^{\sigma(\bp)} = \prod_{j=1}^m \left(-\frac{(g_{j-1}\rho)_{e_j}}{(g_j\rho)_{e_j}}\right).
\end{equation}  
We remark that $\sigma(\bp)$ is well-defined independent of choice of orientation on the edges $e_j$. By definition, if $\bp$ is a trivial path, then $\sigma(\bp) = 0$ so that $(-1)^{\sigma(\bp)} = 1$ and \eqref{eq:sigma product formula} holds where we define the empty product to be equal to $1$.

We seek to show that $(-1)^{\sigma(\bp)}$ only depends on the endpoints of the edge-path $\bp$.  The next lemma shows this is true for orientable circuits.  We extend this to all circuits in Lemma~\ref{lem:sigma well-defined} and to all edge-paths in Corollary~\ref{co:sigma well-defined}.

\begin{lemma}\label{lem:sigma on orientable circuit}
Suppose $\bp \from p_0,\ldots,p_m$ is an edge-path in $T_\rho$ where $p_0 = p_m$.  If $\bp$ is orientable, then $(-1)^{\sigma(\bp)} = 1$.
\end{lemma}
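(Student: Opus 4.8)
The plan is to fix an orientation on $\bp$---which exists by hypothesis---and to evaluate $(-1)^{\sigma(\bp)}$ directly from the product formula \eqref{eq:sigma product formula}. Both sides of \eqref{eq:sigma product formula} are unchanged if one reverses the chosen orientation of a single edge $e_j$, since doing so reverses the sign of both $(g_{j-1}\rho)_{e_j}$ and $(g_j\rho)_{e_j}$; thus we are free to use the orientation furnished by the hypothesis. Throughout, subscript indices are read modulo $m$, so that $g_0 = g_m$ and $e_0 = e_m$, and in particular $(g_0\rho)_{e_0} = (g_m\rho)_{e_m}$.

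Write $a_j = (g_j\rho)_{e_j}$ for $0 \leq j \leq m$; each $a_j$ lies in $\{-1,1\}$ and $a_0 = a_m$. The defining property of the chosen orientation is that $(g_j\rho)_{e_j} = -(g_j\rho)_{e_{j+1}}$ for $0 \leq j < m$. Reading this identity at the index $j-1$ gives, for each $1 \leq j \leq m$,
\begin{equation*}
(g_{j-1}\rho)_{e_j} = -(g_{j-1}\rho)_{e_{j-1}} = -a_{j-1}.
\end{equation*}
Substituting into \eqref{eq:sigma product formula}, the $j$-th factor becomes $-(g_{j-1}\rho)_{e_j}/(g_j\rho)_{e_j} = a_{j-1}/a_j$, whence
\begin{equation*}
(-1)^{\sigma(\bp)} = \prod_{j=1}^m \frac{a_{j-1}}{a_j} = \frac{a_0}{a_m} = 1,
\end{equation*}
the product telescoping and $a_0 = a_m$. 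This is the asserted equality.

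The computation is a one-line telescoping product, so the only point demanding attention is the modular bookkeeping: one must check that the orientation condition indexed at $j-1$ is exactly what relates $(g_{j-1}\rho)_{e_j}$ to $a_{j-1}$, and that the equality $a_0 = a_m$---the sole place where the hypothesis $p_0 = p_m$ is used---holds on the nose. I anticipate no genuine obstacle beyond this.
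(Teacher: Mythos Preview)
Your proof is correct and follows essentially the same approach as the paper: both arguments substitute the orientation condition into the product formula \eqref{eq:sigma product formula} and observe the result is $1$. The paper reindexes the product so that each individual factor equals $1$, whereas you telescope; these are cosmetic variations of the same computation.
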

      
\begin{proof}
To simplify the exposition, subscript indices are considered modulo $m$ in this proof.

Let $g_0,\ldots,g_{m-1} \in \FF$ be the elements so that $p_j = g_j$ for $0 \leq j < m$.  For each $0 \leq j < m$, there is an edge $e_j$ that is the unique edge in $\supp(g_{j-1}\rho) \cap \supp(g_j\rho)$.  As $\bp$ is orientable, we may choose orientations on the edges $e_j$ such that $(g_{j}\rho)_{e_j} = -(g_j\rho)_{e_{j+1}}$ for $0 \leq j < m$.  

Thus, using~\eqref{eq:sigma product formula}, we find:
\begin{equation*}
(-1)^{\sigma(\bp)}  = \prod_{j=1}^{m}\left(-\frac{(g_{j-1}\rho)_{e_j}}{(g_j\rho)_{e_j}}\right)
= \prod_{j=0}^{m-1} \left(-\frac{(g_{j}\rho)_{e_{j+1}}}{(g_j\rho)_{e_j}}\right) = 1.\qedhere
\end{equation*}
\end{proof}

\begin{lemma}\label{lem:sigma well-defined}
Suppose that $\bp\from p_0, \ldots, p_{m}$ is an edge-path in $T_\rho$ where $p_0 = p_{m}$.  Then $(-1)^{\sigma(\bp)} = 1$.
\end{lemma}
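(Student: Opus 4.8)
The plan is to prove the statement by induction on the length $m$ of the circuit $\bp \from p_0, \ldots, p_m$ (where $p_0 = p_m$), using Lemma~\ref{lem:orientable} to reduce to the orientable case already treated in Lemma~\ref{lem:sigma on orientable circuit}.

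Before running the induction I would record the elementary additivity of $\sigma$ under concatenation: if an edge-path $\bq$ in $T_\rho$ is the concatenation $\bq = \bq_1 \cdot \bq_2$ of two edge-paths, with the terminal vertex of $\bq_1$ equal to the initial vertex of $\bq_2$, then $\sigma(\bq) = \sigma(\bq_1) + \sigma(\bq_2)$. This is immediate from the definition of $\sigma$ as the number of indices $j$ with $(g_{j-1}\rho)_{e_j} = (g_j\rho)_{e_j}$, since the sequence of edge-steps of $\bq$ is exactly that of $\bq_1$ followed by that of $\bq_2$ and the condition at each step is intrinsic to that step. I would also dispose of the base case $m = 0$, where $\bp$ is the trivial path and $\sigma(\bp) = 0$ by convention, so that $(-1)^{\sigma(\bp)} = 1$.

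For the inductive step, assume $m \geq 1$ and that the statement holds for every circuit of length smaller than $m$. By Lemma~\ref{lem:orientable} there are two cases. If $\bp$ is orientable, then Lemma~\ref{lem:sigma on orientable circuit} gives $(-1)^{\sigma(\bp)} = 1$ at once. Otherwise there are distinct indices $0 \leq j,k < m$, which we may take with $j < k$, such that $p_j = p_k$. I would then split $\bp$ at $p_j = p_k$ as the concatenation $\bp = \balpha \cdot \bbeta \cdot \bgamma$, where $\balpha \from p_0, \ldots, p_j$, $\bbeta \from p_j, \ldots, p_k$ and $\bgamma \from p_k, \ldots, p_m$. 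Since $p_j = p_k$, both $\bbeta$ and the concatenation $\balpha \cdot \bgamma \from p_0, \ldots, p_j, p_{k+1}, \ldots, p_m$ are circuits in $T_\rho$; and because $T_\rho$ is a simplicial graph—so consecutive vertices of an edge-path are distinct and there are no loop edges—each of $\bbeta$ and $\balpha \cdot \bgamma$ is a genuine circuit whose length is positive and strictly less than $m$. By the induction hypothesis, $(-1)^{\sigma(\bbeta)} = 1$ and $(-1)^{\sigma(\balpha \cdot \bgamma)} = 1$. Combining additivity of $\sigma$ with $\sigma(\balpha \cdot \bgamma) = \sigma(\balpha) + \sigma(\bgamma)$ gives $\sigma(\bp) = \sigma(\balpha) + \sigma(\bbeta) + \sigma(\bgamma) = \sigma(\bbeta) + \sigma(\balpha \cdot \bgamma)$, hence $(-1)^{\sigma(\bp)} = (-1)^{\sigma(\bbeta)} \cdot (-1)^{\sigma(\balpha \cdot \bgamma)} = 1$, completing the induction.

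I do not expect a deep obstacle: the essential content is already carried by Lemma~\ref{lem:sigma on orientable circuit}, and the rest is bookkeeping. The one point that merits care is checking that excising the loop $\bbeta$ from $\bp$ leaves an honest circuit of length strictly less than $m$ (and that $\bbeta$ itself is such a circuit); this is precisely where the simpliciality of $T_\rho$ enters, since it forbids circuits of length $1$ and thereby rules out the degenerate ways in which the decomposition could fail.
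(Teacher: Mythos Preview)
Your proof is correct and follows essentially the same strategy as the paper's: both argue by descent on the length of the circuit, invoking Lemma~\ref{lem:orientable} to either land in the orientable case handled by Lemma~\ref{lem:sigma on orientable circuit} or to find a repeated vertex at which the circuit splits into two strictly shorter circuits, with additivity of $\sigma$ finishing the argument. The paper phrases this as a minimal-counterexample argument and splits $\bp$ cyclically into $\bp'\from p_j,\ldots,p_k$ and $\bp''\from p_k,\ldots,p_j$, whereas you phrase it as a direct induction and excise the inner loop $\bbeta$ to leave $\balpha\cdot\bgamma$; these decompositions differ only by a cyclic relabeling and yield the same $\sigma$-values.
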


\begin{proof}
Assume the statement of the lemma is false.  Fix an edge-path $\bp\from p_0, \ldots, p_{m}$ where $p_0 = p_{m}$ and $(-1)^{\sigma(\bp)} = -1$ with $m$ minimal among all such edge-paths $\bp$.  To simplify the exposition, subscript indices are considered modulo $m$ in this proof.
    
By Lemma~\ref{lem:sigma on orientable circuit}, the path $\bp$ is not orientable.  Hence, by Lemma~\ref{lem:orientable}, there are distinct indices $0 \leq j,k < m$ such that $p_j = p_k$.  We consider the following edge-paths: $\bp' \from p_j,p_{j+1},\ldots,p_k$ and $\bp'' \from p_k,p_{k+1},\ldots,p_j$.  Notice that $\sigma(\bp) = \sigma(\bp') + \sigma(\bp'')$.  Hence either $(-1)^{\sigma(\bp')} = -1$ or $(-1)^{\sigma(\bp'')} = -1$.  However, this is a contradiction as $p_j = p_k$ and the lengths of $\bp'$ and $\bp''$ are less than $m$.

This contradiction proves the lemma.
\end{proof}

From this, it follows readily that $(-1)^{\sigma(\bp)}$ depends only on the endpoints of $\bp$ as desired.

\begin{corollary}\label{co:sigma well-defined}
Suppose that $\bp\from p_0, \ldots, p_m$ and $\bq\from q_0, \ldots,q_n$ are edge-paths in $T_\rho$ where $p_0 = q_0$ and $p_m = q_n$.  Then $(-1)^{\sigma(\bp)} = (-1)^{\sigma(\bq)}$.
\end{corollary}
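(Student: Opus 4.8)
The plan is to deduce this from Lemma~\ref{lem:sigma well-defined} by splicing $\bp$ together with the reverse of $\bq$ into a closed edge-path. Write $\bar{\bq}\from q_n, q_{n-1}, \ldots, q_0$ for the reversed edge-path and let $\bp\ast\bar{\bq}$ denote the concatenation, that is, the edge-path visiting $p_0, p_1, \ldots, p_m = q_n, q_{n-1}, \ldots, q_0$. Since $p_0 = q_0$ and $p_m = q_n$, this is a genuine edge-path in $T_\rho$ beginning and ending at $p_0$; it may fail to be reduced, but Lemma~\ref{lem:sigma well-defined} applies to all closed edge-paths, so that is immaterial.

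I would first record two elementary properties of $\sigma$. \textbf{Additivity under concatenation:} if the terminal vertex of an edge-path equals the initial vertex of another, the edges traversed by their concatenation are precisely those of the first followed by those of the second, and since non-orientability of an edge along a path depends only on that edge together with its two endpoints, $\sigma$ of the concatenation is the sum of the two values of $\sigma$; equivalently, the product in~\eqref{eq:sigma product formula} factors over the two subpaths. \textbf{Invariance under reversal:} the edge $e_j$ joining $q_{j-1}$ and $q_j$ in $\bq$ is literally the same edge of $T_\rho$ that arises when $\bar{\bq}$ passes from $q_j$ to $q_{j-1}$, and the condition $(q_{j-1}\rho)_{e_j} = (q_j\rho)_{e_j}$ picking out the non-orientable edges is symmetric in the two endpoints; hence $\sigma(\bar{\bq}) = \sigma(\bq)$. (Alternatively, via~\eqref{eq:sigma product formula}, reversal replaces each factor $-(q_{j-1}\rho)_{e_j}/(q_j\rho)_{e_j}$ by its reciprocal, which equals itself because numerator and denominator both lie in $\{\pm 1\}$.)

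Granting these, $\sigma(\bp\ast\bar{\bq}) = \sigma(\bp) + \sigma(\bar{\bq}) = \sigma(\bp) + \sigma(\bq)$. Since $\bp\ast\bar{\bq}$ is a closed edge-path, Lemma~\ref{lem:sigma well-defined} gives $(-1)^{\sigma(\bp\ast\bar{\bq})} = 1$, so $(-1)^{\sigma(\bp)}(-1)^{\sigma(\bq)} = 1$; as $(-1)^{\sigma(\bq)} = \pm 1$ is its own inverse, we conclude $(-1)^{\sigma(\bp)} = (-1)^{\sigma(\bq)}$, which is the assertion of Corollary~\ref{co:sigma well-defined}. I do not expect any genuine obstacle here: the corollary is an immediate consequence of Lemma~\ref{lem:sigma well-defined}, and the only point requiring attention is the index bookkeeping in the reversal step, which is routine once one observes that all the coefficients $(q_j\rho)_{e_j}$ take values in $\{\pm 1\}$.
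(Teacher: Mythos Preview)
Your argument is correct and is exactly the paper's approach: concatenate $\bp$ with the reversal of $\bq$, observe $\sigma(\bp\bar{\bq}) = \sigma(\bp) + \sigma(\bq)$, and apply Lemma~\ref{lem:sigma well-defined}. Your additional justification of additivity and reversal-invariance is more detailed than the paper's one-line proof but entirely in line with it.
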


\begin{proof}
Apply Lemma~\ref{lem:sigma well-defined} to the edge-path $\bp\overline{\bq}$ and note that $\sigma(\bp\overline{\bq}) = \sigma(\bp) + \sigma(\bq)$.
\end{proof}

In each component $T_0 \subset T_\rho$ we fix a vertex $g_0$.  For $g \in \sfV(T_0)$, we fix some path $\bp_g$ from $g_0$ to $g$ and define $\sign(g) = (-1)^{\sigma(\bp_g)}$.  By Corollary~\ref{co:sigma well-defined}, the function $\sign\from \FF \to \{-1,1\}$ is well-defined. We can now define the realization map $R\from V_{\rm qf} \to C_c^0(T_\rho;\QQ)$.  Given $x = q_1g_1\rho + \cdots + q_rg_r\rho$, we set: 
\begin{equation*}
R(x) = \sign(g_1)q_1\chi_{g_1} + \cdots + \sign(g_r)q_r\chi_{g_r}.
\end{equation*}

\begin{example}\label{ex:realization}
Let $\Gamma$ be the 3--rose labeled as in Figure~\ref{fig:rose}.  As in Example~\ref{ex:t rho}, the homotopy equivalence $f\from \Gamma \to \Gamma$ and subgraph $H \subset \Gamma$ are irrelevant and will not be specified.

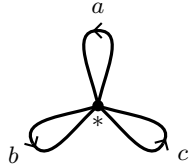
\begin{figure}[ht]
\centering
\begin{tikzpicture}
\begin{scope}[xshift=0cm,rotate=0]
\draw[very thick] (0,0) to[out=80,in=0] (0,1);
\draw[very thick] (0,1) to[out=180,in=100] (0,0);
\draw[thick] (0.05,0.9) -- (-0.05,1) -- (0.05,1.1);
\node at (0,1.3) {\footnotesize $a$};
\end{scope}
\begin{scope}[xshift=0cm,rotate=120]
\draw[very thick] (0,0) to[out=80,in=0] (0,1);
\draw[very thick] (0,1) to[out=180,in=100] (0,0);
\draw[thick] (0.05,0.9) -- (-0.05,1) -- (0.05,1.1);
\node at (0,1.3) {\footnotesize $b$};
\end{scope}
\begin{scope}[xshift=0cm,rotate=240]
\draw[very thick] (0,0) to[out=80,in=0] (0,1);
\draw[very thick] (0,1) to[out=180,in=100] (0,0);
\draw[thick] (0.05,0.9) -- (-0.05,1) -- (0.05,1.1);
\node at (0,1.3) {\footnotesize $c$};
\end{scope}
\fill (0,0) circle [radius=0.075];
\node[below] at (0,0) {\footnotesize $\ast$};
\end{tikzpicture}
\caption{The graph $\Gamma$ in Example~\ref{ex:realization}.}\label{fig:rose}
\end{figure}

There is an isomorphism $\pi_{1}(\Gamma,*) \cong \FF = \I{x_{1},x_{2},x_3}$ where $x_{1}$ corresponds to the edge-path $a$ and $x_{2}$ corresponds to the edge-path $b$ and $x_3$ corresponds to the edge-path $c$.  Fix a lift $\tast$ of $\ast$ to $\tGamma$ that lies on the axes of $x_1$, $x_2$ and $x_3$.  We consider the edge-path from $\tast$ to $x_1^2x_2x_3x_2\inv x_3\inv \tast$.  We fix lifts of $a$, $b$ and $c$ respectively in $\tGamma$ as pictured in Figure~\ref{fig:realization} and abusing notation continue to denote them by $a$, $b$ and $c$ respectively.  Then, in a similar way as in Example~\ref{ex:t rho}, we can see that the 1--chain:
\begin{equation*}
\rho = [\tast,x_1^2x_2x_3x_2\inv x_3\inv \tast] = a + x_1a + x_1^2 b + x_1^2 x_2 c - x_1^2x_2x_3x_2\inv b - x_1^2x_2x_3x_2\inv x_3\inv c  
\end{equation*}  
satisfies conditions \ref{gnc:single}, \ref{gnc:pair} and \ref{gnc:noncommuting}.  Let $g_1 = x_1$, $g_2 = x_1^3x_2x_3x_2\inv x_1^{-2}$ and $g_3 = x_1^2x_2x_3x_2\inv x_1^{-2}$.  Let $x = q_0\rho + q_1g_1\rho + q_2g_2\rho + q_3g_3\rho$.  The 1--chain $\rho$ along with these three translates are pictured in Figure~\ref{fig:realization}.  Using the vertex corresponding to the identity in $T_\rho$, we see that $\sign(g_1) = -1$, $\sign(g_2)= -1$ and $\sign(g_3) = 1$ and thus we have:
\begin{equation*}
R(x) = q_0\chi_{\id_\FF} - q_1\chi_{g_1} - q_2\chi_{g_2} + q_3\chi_{g_3}.
\end{equation*} 
For this $\rho$, the graph $T_\rho$ is the 6--regular tree.  The vertices and edges respectively with non-zero values for the 0--cochain $R(x)$ and its coboundary $\delta_0R(x)$ respectively are shown in Figure~\ref{fig:cochain}.  From this is it readily verified that $\norm{x} = \norm{\delta_0R(x)}$.

\begin{figure}[ht]
\centering
\begin{tikzpicture}
\def\s{1.1}
\draw[very thick] (0,2) -- (0,-2);
\draw[very thick,blue] (0,0) -- (12*\s,0);
\fill[blue] (0,0) circle [radius=0.075];
\fill[blue] (12*\s,0) circle [radius=0.075];
\foreach \a in {1,...,5} {
	\fill (0 + \a*2*\s,0) circle [radius=0.075];
}
\foreach \a in {0,1,2,3} {
	\draw[thick,blue] (\s-0.1 + 2*\a*\s,-0.1) -- (\s + 2*\a*\s,0) -- (\s-0.1 + 2*\a*\s,0.1);
}
\foreach \a in {4,5} {
	\draw[thick,blue] (\s+0.1 + 2*\a*\s,-0.1) -- (\s + 2*\a*\s,0) -- (\s+0.1 + 2*\a*\s,0.1);
}
\draw[thick] (-0.1,-0.9) -- (0,-1) -- (0.1,-0.9);
\draw[thick] (-0.1,0.9) -- (0,1) -- (0.1,0.9);
\draw[very thick,red] (2*\s,0.2) -- (4*\s-0.2,0.2) -- (4*\s-0.2,4);
\draw[thick,red] (4*\s-0.3,1.4) -- (4*\s-0.2,1.5) -- (4*\s-0.1,1.4);
\fill[red] (2*\s,0.2) circle [radius=0.075];
\fill[red] (4*\s-0.2,4) circle [radius=0.075];
\draw[very thick, red] (11*\s-0.2,-2.2) -- (10*\s-0.2,-0.2) -- (8*\s+0.2,-0.2) -- (7*\s+0.2,-2.2);
\begin{scope}[xshift=11.35 cm,yshift=-1.2 cm,rotate=30]
\draw[thick,red] (-0.1,-0.1) -- (0,0) -- (0.1,-0.1);
\end{scope}
\fill[red] (7*\s+0.2,-2.2) circle [radius=0.075];
\fill[red] (11*\s-0.2,-2.2) circle [radius=0.075];
\draw[very thick, red] (6*\s,3) -- (4*\s,3) -- (4*\s,2) -- (6*\s,2);
\draw[thick,red] (5*\s+0.1,3.1) -- (5*\s,3) -- (5*\s+0.1,2.9);
\fill[red] (6*\s,3) circle [radius=0.075];
\fill[red] (6*\s,2) circle [radius=0.075];
\node at (-0.5,0) {\footnotesize $\rho\from \tast$};
\node[above] at (2*\s,0.2) {\footnotesize $g_1\rho$};
\node[right] at (6*\s,3) {\footnotesize $g_2\rho$};
\node[right] at (11*\s,-2*\s) {\footnotesize $g_3\rho$};
\node at (12*\s+1.4,0) {\footnotesize $x_1^2 x_2x_3x_2\inv x_3\inv\tast$};
\node at (\s,-0.4) {\footnotesize $a$};
\node at (3*\s,-0.4) {\footnotesize $x_1a$};
\node at (5*\s,-0.4) {\footnotesize $x_1^2b$};
\node at (7*\s,-0.4) {\footnotesize $x_1^2 x_2c$};
\node at (9*\s,-0.6) {\footnotesize $x_1^2 x_2 x_3x_2\inv b$};
\node at (11*\s+0.3,-0.4) {\footnotesize $x_1^2 x_2 x_3x_2\inv x_3\inv c$};
\node[right] at (4*\s,2.5) {\footnotesize $x_1^3x_2x_3x_2\inv b$};
\node at (0.3,1.1) {\footnotesize $b$};
\node at (0.3,-1.1) {\footnotesize $c$};
\node[blue] at (3*\s,0.6) {\footnotesize $q_0 + q_1$};
\node[blue,left] at (4*\s-0.2,2.5) {\footnotesize $-q_1 + q_2$};
\node[blue] at (9*\s,0.4) {\footnotesize $-q_0 + q_3$};
\end{tikzpicture}
\caption{The Nielsen 1--chain in Example~\ref{ex:realization} and the translates determining the 1--chain $x$.  The coefficient of $x$ on the overlapping edges is also given.}\label{fig:realization}
\end{figure}
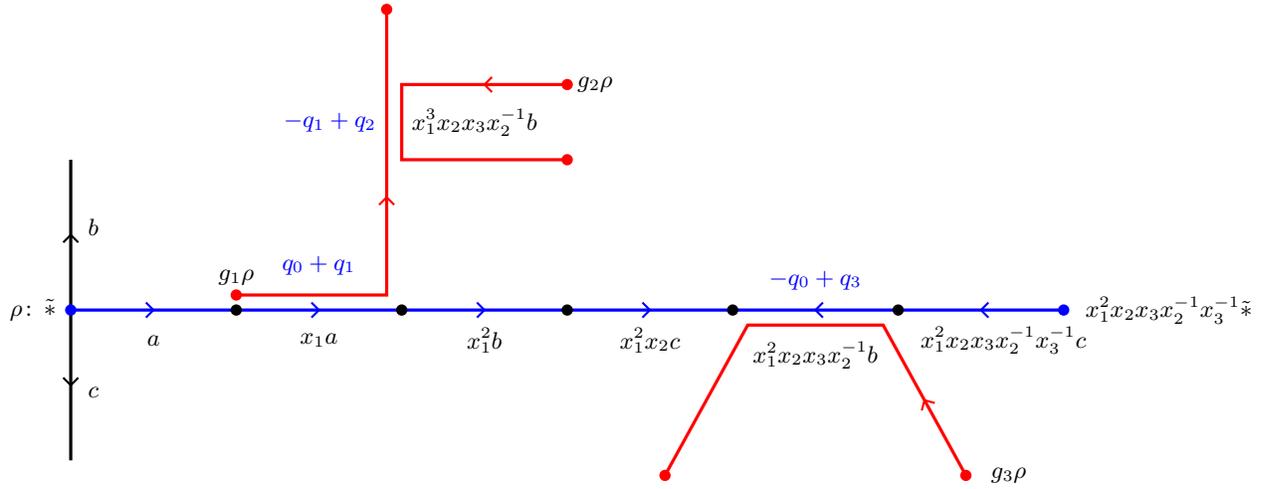

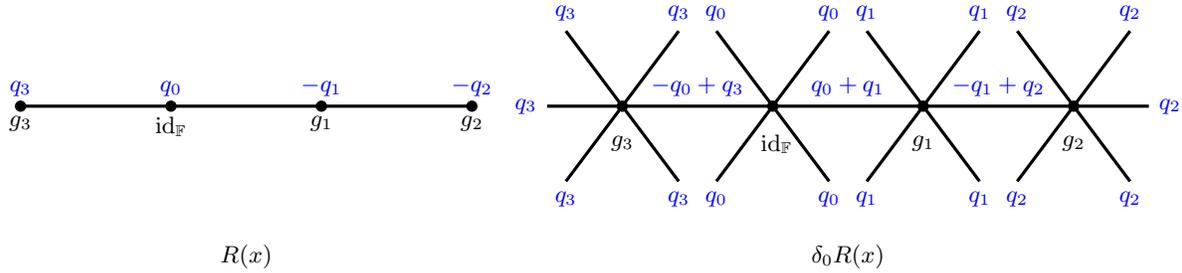
\begin{figure}[ht]
\centering
\begin{tikzpicture}
\begin{scope}
\draw[very thick] (0,0) -- (6,0);
\fill (0,0) circle [radius=0.075];
\fill (2,0) circle [radius=0.075];
\fill (4,0) circle [radius=0.075];
\fill (6,0) circle [radius=0.075];
\node[below] at (0,0) {\footnotesize $g_3$};
\node[below] at (2,0) {\footnotesize $\id_\FF$};
\node[below] at (4,0) {\footnotesize $g_1$};
\node[below] at (6,0) {\footnotesize $g_2$};
\node[above,blue] at (0,0) {\footnotesize $q_3$};
\node[above,blue] at (2,0) {\footnotesize $q_0$};
\node[above,blue] at (4,0) {\footnotesize $-q_1$};
\node[above,blue] at (6,0) {\footnotesize $-q_2$};
\node at (3,-2) {\footnotesize $R(x)$};
\end{scope}
\begin{scope}[xshift=8cm]
\draw[very thick] (-1,0) -- (7,0);
\draw[very thick] (1.25-2,1) -- (0,0);
\draw[very thick] (2.75-2,1) -- (0,0);
\draw[very thick] (1.25-2,-1) -- (0,0);
\draw[very thick] (2.75-2,-1) -- (0,0);
\draw[very thick] (1.25,1) -- (2,0);
\draw[very thick] (2.75,1) -- (2,0);
\draw[very thick] (1.25,-1) -- (2,0);
\draw[very thick] (2.75,-1) -- (2,0);
\draw[very thick] (1.25+2,1) -- (4,0);
\draw[very thick] (2.75+2,1) -- (4,0);
\draw[very thick] (1.25+2,-1) -- (4,0);
\draw[very thick] (2.75+2,-1) -- (4,0);
\draw[very thick] (1.25+4,1) -- (6,0);
\draw[very thick] (2.75+4,1) -- (6,0);
\draw[very thick] (1.25+4,-1) -- (6,0);
\draw[very thick] (2.75+4,-1) -- (6,0);
\fill (0,0) circle [radius=0.075];
\fill (2,0) circle [radius=0.075];
\fill (4,0) circle [radius=0.075];
\fill (6,0) circle [radius=0.075];
\node at (0,-0.5) {\footnotesize $g_3$};
\node at (2.05,-0.5) {\footnotesize $\id_\FF$};
\node at (4,-0.5) {\footnotesize $g_1$};
\node at (6,-0.5) {\footnotesize $g_2$};
\node at (3,-2) {\footnotesize $\delta_0R(x)$};
\node[above,blue] at (1,0) {\footnotesize $-q_0 + q_3$}; 
\node[above,blue] at (3,0) {\footnotesize $q_0 + q_1$}; 
\node[above,blue] at (5,0) {\footnotesize $-q_1 + q_2$};
\node[above,blue] at (-0.75,1) {\footnotesize $q_3$}; 
\node[above,blue] at (0.75,1) {\footnotesize $q_3$}; 
\node[below,blue] at (-0.75,-1) {\footnotesize $q_3$}; 
\node[below,blue] at (0.75,-1) {\footnotesize $q_3$}; 
\node[left,blue] at (-1,0) {\footnotesize $q_3$}; 
\node[above,blue] at (-0.75+2,1) {\footnotesize $q_0$}; 
\node[above,blue] at (0.75+2,1) {\footnotesize $q_0$}; 
\node[below,blue] at (-0.75+2,-1) {\footnotesize $q_0$}; 
\node[below,blue] at (0.75+2,-1) {\footnotesize $q_0$}; 
\node[above,blue] at (-0.75+4,1) {\footnotesize $q_1$}; 
\node[above,blue] at (0.75+4,1) {\footnotesize $q_1$}; 
\node[below,blue] at (-0.75+4,-1) {\footnotesize $q_1$}; 
\node[below,blue] at (0.75+4,-1) {\footnotesize $q_1$}; 
\node[above,blue] at (-0.75+6,1) {\footnotesize $q_2$}; 
\node[above,blue] at (0.75+6,1) {\footnotesize $q_2$}; 
\node[below,blue] at (-0.75+6,-1) {\footnotesize $q_2$}; 
\node[below,blue] at (0.75+6,-1) {\footnotesize $q_2$}; 
\node[right,blue] at (7,0) {\footnotesize $q_2$};
\end{scope}
\end{tikzpicture}
\caption{The 0--cochain $R(x)$ in Example~\ref{ex:realization} and its coboundary $\delta_0R(x)$.  For $\delta_0R(x)$ the values are only defined up to sign as orientations on the edges are not specified.}\label{fig:cochain}
\end{figure}
\end{example}

We can now show that $R$ has the properties mentioned previously in this section.

\begin{lemma}\label{lem:realize qausi-fix}
The $\QQ[\FF]$--module homomorphism $R \from V_{\rm qf} \to C_c^0(T_\rho;\QQ)$ satisfies the following statements for all $x \in V_{\rm qf}$.
\begin{enumerate}
\item\label{realize 1} $\norm{R(x)} = \norm{R(A_{f,H}^k(x))}$ for all $k \geq 0$, and
\item\label{realize 2} $\norm{x} = \norm{\delta_0R(x)}$.
\end{enumerate}
\end{lemma}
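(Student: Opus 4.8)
The plan is to treat the two statements separately: \eqref{realize 1} is a quick consequence of $A_{f,H}(\rho)=\rho$, while \eqref{realize 2} is the substantive point and will be reduced to a single-edge identity that is then settled by Corollary~\ref{co:sigma well-defined} together with formula~\eqref{eq:sigma product formula}.

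For \eqref{realize 1}, write $x = q_1 g_1\rho + \cdots + q_r g_r\rho$ with the $g_j \in \FF$ distinct. Using $A_{f,H}(\rho)=\rho$ together with $A_{f,H}(gy)=\Phi_f(g)A_{f,H}(y)$, induction on $k$ gives $A_{f,H}^k(x) = q_1\Phi_f^k(g_1)\rho + \cdots + q_r\Phi_f^k(g_r)\rho$, again a $\QQ$--linear combination of translates of $\rho$ to which $R$ applies. Since $\Phi_f^k \in \Aut(\FF)$, the elements $\Phi_f^k(g_1),\ldots,\Phi_f^k(g_r)$ are pairwise distinct, so $R(A_{f,H}^k(x)) = \sum_j \sign(\Phi_f^k(g_j))\,q_j\,\chi_{\Phi_f^k(g_j)}$ is a sum of $\pm q_j$ times characteristic functions of distinct vertices of $T_\rho$; hence $\norm{R(A_{f,H}^k(x))}^2 = \sum_j q_j^2 = \norm{R(x)}^2$.

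For \eqref{realize 2}, I would use the bijection between $\sfE(T_\rho)$ and $\sfE(\tGamma)-\sfE(\tH)$ recorded after the definition of $T_\rho$. Since $x \in C_1(\tGamma,\tH;\QQ)$ we have $\norm{x}^2 = \sum_{e}x_e^2$ and $\norm{\delta_0R(x)}^2 = \sum_{\varepsilon}(\delta_0R(x)(\varepsilon))^2$, both sums running over the same index set, so it suffices to prove $\abs{x_e} = \abs{\delta_0R(x)(\varepsilon_e)}$ for each $e$, where $\varepsilon_e$ is the edge of $T_\rho$ corresponding to $e$. Fix such an $e$ and let $g,g'\in\FF$ be the two elements with $e \in \supp(g\rho)\cap\supp(g'\rho)$ supplied by \ref{gnc:pair}; orient $\varepsilon_e$ so that $\sfo(\varepsilon_e)=g$ and $\sft(\varepsilon_e)=g'$. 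Writing $q_h$ for the coefficient of $h\rho$ in $x$ (with $q_h=0$ when $h$ does not occur) and noting that $g\rho$ and $g'\rho$ are the only translates with $e$ in their support, one gets $x_e = q_g(g\rho)_e + q_{g'}(g'\rho)_e$ and $\delta_0R(x)(\varepsilon_e) = \sign(g')q_{g'} - \sign(g)q_g$.

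The crux, and the step I expect to require the most care, is the identity $\sign(g)\,(g\rho)_e = -\sign(g')\,(g'\rho)_e$. To prove it, pick an edge-path $\bp$ in $T_\rho$ from the basepoint of the component containing $g$ to $g$; then $\bp$ followed by $\varepsilon_e$ is an edge-path from that basepoint to $g'$, so Corollary~\ref{co:sigma well-defined} together with the additivity of $\sigma$ under concatenation gives $\sign(g') = (-1)^{\sigma(\bp)+\sigma(\varepsilon_e)} = \sign(g)\cdot(-1)^{\sigma(\varepsilon_e)}$, while \eqref{eq:sigma product formula} applied to the single edge $\varepsilon_e$ gives $(-1)^{\sigma(\varepsilon_e)} = -(g\rho)_e/(g'\rho)_e$ (here $(g\rho)_e,(g'\rho)_e\in\{-1,1\}$ since $\rho=\pi_H^\perp([u,v])$). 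Granting this, multiply $x_e$ by the unit $\sign(g)(g\rho)_e$: since $(g\rho)_e^2=1$ and $\sign(g)(g\rho)_e(g'\rho)_e = -\sign(g')(g'\rho)_e^2 = -\sign(g')$, one obtains $\sign(g)(g\rho)_e\,x_e = \sign(g)q_g - \sign(g')q_{g'} = -\delta_0R(x)(\varepsilon_e)$, hence $\abs{x_e} = \abs{\delta_0R(x)(\varepsilon_e)}$, and summing over $e$ yields \eqref{realize 2}. The only loose ends are bookkeeping—the choice of orientation on each $\varepsilon_e$ and the degenerate cases where $g$ or $g'$ does not appear in the expansion of $x$—both routine once the displayed identity is in hand.
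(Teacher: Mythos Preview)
Your proof is correct and follows essentially the same approach as the paper. The paper also reduces \eqref{realize 2} to the per-edge identity $x_e^2 = (R(x)(g') - R(x)(g))^2$ via the bijection $\sfE(T_\rho)\leftrightarrow\sfE(\tGamma)-\sfE(\tH)$, but then splits into three cases according to $\abs{\{g,g'\}\cap\{g_1,\ldots,g_r\}}$ and in the main case asserts directly that $\sign(g_{j_2}) = \pm\sign(g_{j_1})$ depending on whether $e$ is non-orientable; your argument is slightly cleaner in that the single identity $\sign(g)(g\rho)_e = -\sign(g')(g'\rho)_e$, derived explicitly from Corollary~\ref{co:sigma well-defined} and \eqref{eq:sigma product formula}, handles all cases uniformly under the convention $q_h = 0$.
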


\begin{proof}
To simplify notation, we denote $A_{f,H}$ by $A$ in the proof.

We first verify~\eqref{realize 1}.  Let $x = q_1g_1\rho + \cdots + q_r g_r \rho \in V_{\rm qf}$.  As $A(\rho) = \rho$, for $k \geq 0$ we have that $A^k(x) = q_1\Phi^k_f(g_1)\rho + \cdots + q_r\Phi^k_f(g_r)\rho$.
Hence it is apparent that for all $k \geq 0$ that:
\begin{equation*}
\norm{R(A^k(x))}^2 = \sum_{j = 1}^r q_j^2.
\end{equation*}
This shows~\eqref{realize 1}.

Next, we verify~\eqref{realize 2}.  Let $x = q_1g_1\rho + \cdots + q_r g_r \rho \in V_{\rm qf}$.  As $\supp(g\rho) \subset \sfE(\tGamma) - \sfE(\tH)$ for all $g \in \FF$, we have:
\begin{align*}
\norm{x}^2 & = \sum_{e \in \sfE(\tGamma)} x_e^2 = \sum_{e \in \sfE(\tGamma) - \sfE(\tH)} \left( \sum_{j = 1}^r q_j(g_j\rho)_e  \right)^2.
\end{align*}
For the 1--cochain $\delta_0R(x) \in C_c^1(T_\rho;\QQ)$, we have:
\begin{equation*}
\norm{\delta_0R(x)}^2 = \sum_{\varepsilon \in \sfE(T_\rho)} \bigl(R(x)(\sft(\varepsilon)) - R(x)(\sfo(\varepsilon))\bigr)^2.
\end{equation*}  
As explained at the beginning of this section, an edge $e \in \sfE(\tGamma) - \sfE(\tH)$ corresponds bijectively to the edge $\varepsilon \in \sfE(T_\rho)$ where $o(\varepsilon) = g$ and $t(\varepsilon) = g'$ and $g,g' \in \FF$ are the unique elements so that $e$ is the unique edge in the intersection $\supp(g\rho) \cap \supp(g'\rho)$.  We will prove item~\eqref{realize 2} by showing that:
\begin{equation}\label{eq:equal}
\left(\sum_{j = 1}^r q_j(g_j\rho)_e\right)^2 = \bigl(R(x)(g') - R(x)(g)\bigr)^2.
\end{equation}
There are three cases depending on the cardinality of $\{g,g'\} \cap \{g_1,\ldots,g_r\}$.  

If $\{g,g'\} \cap \{g_1,\ldots, g_r\} = \emptyset$, then $(g_j\rho)_e = 0$ for all $1 \leq j \leq r$ and $R(x)(g') = R(x)(g) = 0$ and thus both sides of \eqref{eq:equal} are equal to 0.

Next, suppose that $\{g,g'\} \cap \{g_1,\ldots, g_r\} = \{g_{j_1}\}$.  Then $(g_j\rho)_e \neq 0$ only if $j = j_1$ and so the left-hand side of~\eqref{eq:equal} is equal to $q_{j_1}^2$.  Likewise, assuming without loss of generality that $g' = g_{j_1}$, we have $R(x)(g') = \sign(g')q_{j_1}$ and $R(x)(g) = 0$ and so the righthand side of~\eqref{eq:equal} is also equal to $q_{j_1}^2$.

Lastly, suppose that $\{g,g'\} \cap \{g_1,\ldots, g_r\} = \{g_{j_1},g_{j_2}\}$.  The left-hand side of \eqref{eq:equal} is equal to $(q_{j_1}(g_{j_1}\rho)_e + q_{j_2}(g_{j_2}\rho)_e)^2$.  If $(g_{j_1}\rho)_e = (g_{j_2}\rho)_e$, i.e., $e$ is non-orientable, then this quantity equals $(q_{j_1} + q_{j_2})^2$.  Else if $(g_{j_1}\rho)_e = -(g_{j_2}\rho)_e$, then this quantity equals $(q_{j_1} - q_{j_2})^2$.  Without loss of generality, we assume $g' = g_{j_1}$ and $g = g_{j_2}$ so that the righthand side of \eqref{eq:equal} equals $(\sign(g_{j_1})q_{j_1} - \sign(g_{j_2})q_{j_2})^2$.  If $(g_{j_1}\rho)_e = (g_{j_2}\rho)_e$, then $\sign(g_{j_2}) = - \sign(g_{j_1})$ and so in this case, the righthand side equals $(q_{j_1} + q_{j_2})^2$.  Else if $(g_{j_1}\rho)_e = -(g_{j_2}\rho)_e$, then $\sign(g_{j_2}) = \sign(g_{j_1})$ and so in this case the righthand side equals $(q_{j_1} - q_{j_2})^2$.

This completes the verification of~\eqref{eq:equal} and thus completes the proof of item~\eqref{realize 2}.
\end{proof}

We can now establish that the coboundary operator is bi-Lipschitz.

\begin{lemma}\label{lem:coboundary estimate}
There is a constant $B \geq 1$ such that for any $\psi \in C_c^0(T_\rho;\QQ)$:
\begin{equation*}
B\inv\norm{\psi} \leq \norm{\delta_0\psi} \leq B\norm{\psi}.
\end{equation*}
\end{lemma}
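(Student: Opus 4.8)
The plan is to establish the two inequalities separately: the upper bound $\norm{\delta_0\psi} \leq B\norm{\psi}$ is a routine bounded--degree estimate, while the lower bound is the real content and will come from non-amenability of the groups acting on the components of $T_\rho$. For the upper bound I would first observe that $T_\rho$ is $d$--regular for $d = \#\abs{\supp(\rho)}$: by \ref{gnc:pair} every edge of $\sfE(\tGamma) - \sfE(\tH)$ lies in exactly two translates of $\rho$, and by \ref{gnc:single} two distinct translates of $\rho$ share at most one edge, so the edges of $T_\rho$ incident to a vertex $g$ are in bijection with $\supp(g\rho)$. Then for $\psi \in C_c^0(T_\rho;\QQ)$, using $(a-b)^2 \leq 2(a^2+b^2)$,
\begin{equation*}
\norm{\delta_0\psi}^2 = \sum_{\varepsilon \in \sfE(T_\rho)} \bigl(\psi(\sft(\varepsilon)) - \psi(\sfo(\varepsilon))\bigr)^2 \leq 2\sum_{g \in \sfV(T_\rho)} \deg(g)\,\psi(g)^2 = 2d\norm{\psi}^2,
\end{equation*}
so any $B \geq \sqrt{2d}$ works for this direction.

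For the lower bound I would exploit the $\FF$--action on $T_\rho$. This action is free and transitive on $\sfV(T_\rho) = \FF$, and---through the equivariant bijection $\sfE(T_\rho) \cong \sfE(\tGamma) - \sfE(\tH)$ recorded before the lemma---free with finitely many orbits on $\sfE(T_\rho)$. Transitivity on vertices forces every connected component of $T_\rho$ to be an $\FF$--translate, hence an isometric copy, of a single component $T_0$, which by \ref{gnc:noncommuting} may be taken to contain the identity vertex together with two non-commuting elements $g_1, g_2 \in \FF$. Its setwise stabilizer $\FF_0 = \{g \in \FF \mid gT_0 = T_0\}$ coincides with the subset $\sfV(T_0) \subseteq \FF$, is a subgroup, and acts on $T_0$ freely, transitively on vertices, and with finitely many edge orbits; translating a set of edge representatives to have initial vertex the identity then identifies $T_0$ with a Cayley graph of $\FF_0$ for a finite symmetric generating set. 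Moreover $\FF_0$ contains $\I{g_1,g_2}$, a non-abelian subgroup of a free group and hence non-amenable, so $\FF_0$ is itself non-amenable.

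Granting this, I would invoke the standard fact that the coboundary operator $\delta_0$ on the Cayley graph of a non-amenable group is bounded below---equivalently, by Kesten's theorem the associated simple random walk has spectral radius strictly less than $1$, equivalently the Cheeger constant is positive and the discrete Cheeger inequality applies---to get a constant $c > 0$, depending only on $\FF_0$ and the chosen generating set, with $\norm{\delta_0\psi} \geq c\norm{\psi}$ for all $\psi \in C_c^0(T_0;\QQ)$. Finally, since $C_c^0(T_\rho;\QQ)$ is the orthogonal direct sum of the subspaces $C_c^0(T';\QQ)$ over the components $T'$ of $T_\rho$, each isometric to $T_0$, and $\delta_0$ preserves this decomposition because distinct components have disjoint edge sets, the inequality $\norm{\delta_0\psi} \geq c\norm{\psi}$ holds on all of $C_c^0(T_\rho;\QQ)$. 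Taking $B = \max\{\sqrt{2d},\, c\inv\}$ then completes the proof.

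The one genuinely delicate point is the non-amenability step in the third paragraph, and this is exactly where the hypotheses on $\rho$ are used essentially: without \ref{gnc:noncommuting} a component of $T_\rho$ could be a single vertex or a bi-infinite line, with $\FF_0$ trivial or infinite cyclic, both amenable, and the lower bound would fail. The secondary thing to watch is that $T_\rho$ may be disconnected with infinitely many components, so the constant $c$ must be uniform across them; this is precisely why it is worth recording up front that all components are isometric $\FF$--translates of $T_0$ before applying the Cheeger/Kesten estimate.
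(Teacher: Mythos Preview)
Your proof is correct and structurally identical to the paper's: both establish the upper bound from $d$--regularity of $T_\rho$, reduce the lower bound to a single component $T_0$ using that all components are $\FF$--translates, identify $\FF_0 = \stab(T_0)$ as a finitely generated nonabelian subgroup of $\FF$ via \ref{gnc:noncommuting}, and then deduce a spectral gap for $\delta_0$.

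The only genuine difference is the tool invoked for that last step. You appeal to Kesten's theorem (equivalently the Cheeger inequality) for Cayley graphs of non-amenable groups, having first observed that $T_0$ is the Cayley graph of $\FF_0$ for the finite symmetric generating set $\{s \in \FF_0 \mid \supp(\rho)\cap\supp(s\rho)\neq\emptyset\}$. The paper instead stays within the $L^2$--invariant framework: it cites L\"uck's computation that the first Novikov--Shubin invariant $\alpha_1(T_0)$ equals $+\infty$ for a nonabelian group $\FF_0$ acting freely and cocompactly on $T_0$, and reads off from the spectral density function that $\dim_{\FF_0}(\img E^{\partial_1\delta_0}_{\lambda^2}) = 0$ for some $\lambda>0$, hence $\norm{\delta_0\psi} > \lambda\norm{\psi}$. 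These are two packagings of the same underlying fact (the combinatorial Laplacian on $T_0$ has a spectral gap at $0$); your route is more elementary and self-contained for readers outside $L^2$--invariant theory, while the paper's route keeps the argument within the formalism already in use elsewhere in the article.
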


\begin{proof}
Boundedness of the coboundary operator $\delta_0 \from C_c^0(T_\rho;\QQ) \to C_c^1(T_\rho;\QQ)$ is well-known and a simple calculation that we reproduce here.  We note that any vertex in $T_\rho$ has degree $d = \#\abs{\supp(\rho)}$.  We compute:
\begin{align*}
\norm{\delta_0\psi}^2 &= \sum_{\varepsilon \in \sfE(T_\rho)} \delta_0\psi(\varepsilon)^2 = \sum_{\varepsilon \in \sfE(T_\rho)} \bigl(\psi(\sft(\varepsilon)) - \psi(\sfo(\varepsilon))\bigr)^2 \\
& \leq \sum_{\varepsilon \in \sfE(T_\rho)} 4\max\{\psi(\sft(\varepsilon))^2,\psi(\sfo(\varepsilon))^2\} \\
& \leq 4d\sum_{g \in \sfV(T_\rho)} \psi(g)^2 = 4d\norm{\psi}^2.
\end{align*}
The last inequality is observed by organizing edges based on which vertex provides the maximal value.

For the other direction, we fix a component $T_0 \subseteq T_\rho$ and let $\FF_0 = \stab(T_0) \subseteq \FF$.  As any two components of $T_\rho$ are isomorphic are graphs, it suffices to prove lower bound for $\psi \in C_c^0(T_0,\QQ)$.  Without loss of generality, we assume that $T_0$ contains the vertex $\id_\FF$.  As $\FF_0$ acts freely and transitively on the vertices of $T_0$, this implies that $\FF_0$ is finitely generated.  By~\ref{gnc:noncommuting}, $\FF_0$ is nonabelian.    

We consider the $\QQ[\FF_0]$--modules $C_c^0(T_0,\QQ)$ and $C_c^1(T_0;\QQ)$ respectively as submodules of the Hilbert spaces of square summable functions $\psi \from \sfV(T_0) \to \CC$, denoted $L^2(\sfV(T_0))$ and square summable functions $\psi \from \sfE(T_0) \to \CC$, denoted $L^2(\sfE(T_0))$, respectively.  These are finite dimensional Hilbert--$\FF_0$--modules isomorphic to $L^2(\FF_0)$ and $L^2(\FF_0)^n$ where $n$ is the number of $\FF_0$--orbits of edges in $T_0$.  The calculation above shows that the coboundary operator extends to a bounded operator $\delta_0\from L^2(\sfV(T_0)) \to L^2(\sfE(T_0))$, which is clearly $\FF_0$--equivariant.  It is easy to see that $\delta_0$ is injective as the only functions with coboundary equal to the zero function are the constant functions and the only constant function which is square summable is the zero function.  We will prove the lower bound for this operator.  This will involve several definitions and some notation that is not need elsewhere and so we refer the reader to the book by L\"uck~\cite{bk:Luck02} for these details as cited below.    

As $\FF_0$ is nonabelian and acts free and cocompactly on $T_0$, the first Novikov--Shubin invariant $\alpha_1(T_0)$ is equal to $+\infty$~\cite[Thoerem~2.55(5b)]{bk:Luck02}.  By \cite[Lemma~2.3~\&~Lemma~2.4]{bk:Luck02}, this implies that there is a constant $\lambda > 0$ such that:
\begin{equation*}
\dim_{\FF_0} \left(\img E^{\bd_1\delta_0}_{\lambda^2}\right) = F_{\delta_0}(\lambda) = \dim_{\FF_0} (\ker \delta_0) = 0.
\end{equation*}
(Here $\bd_1 \from L^2(\sfE(T_0))) \to L^2(\sfV(T_0))$ is the usual boundary operator---which is the adjoint of $\delta_0$,$\{E^{\bd_1\delta_0}_{\lambda}\}$ is the spectral family of $\bd_1\delta_0$ and $F_{\delta_0}\from [0,\infty) \to [0,\infty)$ is the spectral density function of $\delta_0$.)  Hence, $E^{\bd_1\delta_0}_{\lambda^2}(\psi) = 0$ for all $\psi \in L^2(\sfV(T_0))$ and by \cite[Lemma~2.2(2)]{bk:Luck02}, this gives that $\norm{\delta_0\psi} > \lambda\norm{\psi}$ for all $\psi \in L^2(\sfV(T_0))$ which are non-zero.  This is the desired lower bound.

Thus setting $B = \max\{2\sqrt{d},\lambda\inv\}$ completes the proof of the lemma.
\end{proof}

With these estimates, the proof of Theorem~\ref{th:quasi fixed dynamics} the geometric case is similar to the proof in the non-geometric case.

\begin{proposition}\label{prop:quasi fixed dynamics geom}
If $V_{\rm qf}$ is generated by a geometric Nielsen 1--chain, then there is a constant $C > 0$ such that for any $\xi \in V_{\rm qf}^{(2)}$ and $k \geq 0$ we have $C\inv\norm{\xi} \leq \norm{A_{f,H}^{k}(\xi)} \leq C\norm{\xi}$.
\end{proposition}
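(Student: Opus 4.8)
The plan is to deduce the proposition from the two lemmas just established, Lemma~\ref{lem:realize qausi-fix} and Lemma~\ref{lem:coboundary estimate}: first prove the two-sided estimate for $x \in V_{\rm qf}$, and then upgrade it to arbitrary $\xi \in V_{\rm qf}^{(2)}$ exactly as at the end of the proof of Proposition~\ref{prop:quasi fixed dynamics nongeom}. To simplify notation I would again write $A$ for $A_{f,H}$ and set $C = B^2$, where $B$ is the constant furnished by Lemma~\ref{lem:coboundary estimate}.

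First I would fix $x = q_1g_1\rho + \cdots + q_rg_r\rho \in V_{\rm qf}$ and observe that, since $A(\rho) = \rho$, the chain $A^k(x) = q_1\Phi_f^k(g_1)\rho + \cdots + q_r\Phi_f^k(g_r)\rho$ again lies in $V_{\rm qf}$, so Lemma~\ref{lem:realize qausi-fix} applies to it as well. Combining Lemma~\ref{lem:realize qausi-fix}~\eqref{realize 2} (which identifies $\norm{A^k(x)}$ with $\norm{\delta_0R(A^k(x))}$ and $\norm{x}$ with $\norm{\delta_0R(x)}$), Lemma~\ref{lem:coboundary estimate} (which traps $\norm{\delta_0R(\param)}$ within a factor $B$ of $\norm{R(\param)}$), and Lemma~\ref{lem:realize qausi-fix}~\eqref{realize 1} (which gives $\norm{R(A^k(x))} = \norm{R(x)}$), one obtains
\begin{equation*}
\norm{A^k(x)} \leq B\norm{R(A^k(x))} = B\norm{R(x)} \leq B^2\norm{\delta_0R(x)} = B^2\norm{x},
\end{equation*}
and the symmetric chain gives $\norm{x} \leq B^2\norm{A^k(x)}$. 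Hence $C\inv\norm{x} \leq \norm{A^k(x)} \leq C\norm{x}$ for all $x \in V_{\rm qf}$ and all $k \geq 0$.

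To pass to $V_{\rm qf}^{(2)}$, given $\xi \in V_{\rm qf}^{(2)}$, $k \geq 0$ and $\epsilon > 0$, I would choose $x_1,x_2 \in V_{\rm qf}$ with $\abs{\norm{\xi}^2 - \norm{x_1 + ix_2}^2}$ and $\abs{\norm{A^k(\xi)}^2 - \norm{A^k(x_1 + ix_2)}^2}$ both less than $\epsilon$ (possible since the translates $g\rho$ span a dense subspace of $V_{\rm qf}^{(2)}$ and $A^k$ is bounded), use that $A$ is a real operator so that $\norm{A^k(x_1 + ix_2)}^2 = \norm{A^k(x_1)}^2 + \norm{A^k(x_2)}^2$, apply the bound on $V_{\rm qf}$ to $x_1$ and $x_2$ separately, and then let $\epsilon \to 0$. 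This is verbatim the computation closing the proof of Proposition~\ref{prop:quasi fixed dynamics nongeom}, with the constant $B$ there replaced by $B^2$ here.

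Since the substantive work has already been done in Lemma~\ref{lem:realize qausi-fix} and Lemma~\ref{lem:coboundary estimate}, I do not anticipate a genuine obstacle. The only place demanding a little care is the reduction from $V_{\rm qf}^{(2)}$ to $V_{\rm qf}$: one must know that $R$ and $\delta_0$, being $\QQ[\FF]$--module homomorphisms of free modules, extend to bounded Hilbert--$\FF$--module maps and that the density/complexification step genuinely applies. Both points are guaranteed by the setup at the beginning of Section~\ref{sec:dynamics quasi-fixed} and mirror the non-geometric case word for word.
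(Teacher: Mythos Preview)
Your proposal is correct and matches the paper's proof essentially line for line: set $C = B^2$ with $B$ from Lemma~\ref{lem:coboundary estimate}, chain together Lemma~\ref{lem:realize qausi-fix}~\eqref{realize 2}, Lemma~\ref{lem:coboundary estimate}, and Lemma~\ref{lem:realize qausi-fix}~\eqref{realize 1} to get the two-sided bound on $V_{\rm qf}$, and then pass to $V_{\rm qf}^{(2)}$ by the same density/complexification argument used in Proposition~\ref{prop:quasi fixed dynamics nongeom}. The only cosmetic difference is that the paper writes the first step as the equality $\norm{A^k(x)} = \norm{\delta_0 R(A^k(x))}$ before applying the coboundary bound, whereas you absorb that into your prose; and your closing remark about extending $R$ and $\delta_0$ to bounded Hilbert maps is unnecessary, since (as in the paper) the approximation argument only ever uses the rational estimate.
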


\begin{proof}
Let $B \geq 1$ be the constant from Lemma~\ref{lem:coboundary estimate} and set $C = B^2$.  To simplify notation, we denote $A_{f,H}$ by $A$ in the proof.

We first prove the proposition for $x \in V_{\rm qf}$.  By Lemmas~\ref{lem:realize qausi-fix} and \ref{lem:coboundary estimate} we find for any $k \geq 0$:
\begin{equation*}
\norm{A^k(x)} = \norm{\delta_0RA^k(x)} \leq B\norm{R(A^k(x))} = B\norm{R(x)} \leq B^2\norm{\delta_0R(x)} = C\norm{x}.
\end{equation*}    
Similarly for any $k \geq 0$:
\begin{equation*}
\norm{x} = \norm{\delta_0R(x)} \leq B\norm{R(x)} = B\norm{R(A^k(x))} \leq B^2\norm{\delta_0R(A^k(x))} = C\norm{A^k(x)}.
\end{equation*}
This proves the proposition for $x \in V_{\rm qf}$.

The general case $\xi \in V_{\rm qf}^{(2)}$ now proceeds exactly as in Proposition~\ref{prop:quasi fixed dynamics nongeom}.
\end{proof}


\section{Isolating the Quasi-Fixed Subspace}\label{sec:isolating}

The purpose of this section is Theorem~\ref{th:intersection} which proves that $V_{\rm qf}^{(2)}$ equals the intersection $\frakE(A_{f,H},\nu) \cap \frakF(A_{f,H},\nu\inv)$ for some $\nu$ sufficiently close to and greater than 1 when $f$ satisfies the chain flare condition.  We begin by showing that we can extend the chain flaring behavior from rational 1--chains to $L^2$--1--chains.

For $0 < \theta < 1$ we set:
\begin{multline*}
N_\theta(V_{\rm h}^{(2)}) = \bigl\{ \xi \in L^2(\FF)^{n_H} \mid \I{\Re(\xi),x} > \theta \norm{\Re(\xi)}\norm{x} \mbox{ and } \\  \I{\Im(\xi),x'} > \theta \norm{\Im(\xi)}\norm{x'} \mbox{ for some } x, x' \in V_{\rm h} \bigr\}
\end{multline*}
where $\Re(\param)$ and $\Im(\param)$ denote the real part and imaginary part respectively.

\begin{proposition}\label{prop:complex growth}
Suppose that the homotopy equivalence $f \from \Gamma \to \Gamma$ satisfies the chain flare condition relative to the $f$--invariant subgraph $H \subset \Gamma$.  Then there exist constants $\lambda > 1$, $0 < \theta < 1$ and $N > 0$ such that for any $\xi \in N_\theta(V_{\rm h}^{(2)})^\infty$ we have:
\begin{equation*}
\lambda\norm{A_{f,H}^{N}(\xi)} \leq \max\left\{\norm{A_{f,H}^{2N}(\xi)},\norm{\xi}\right\}.
\end{equation*}
\end{proposition}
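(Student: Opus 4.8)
The plan is to promote the chain flaring estimate \ref{cfh:2}, which is a statement about rational $1$--chains, to a statement about all of $L^{2}(\FF)^{n_{H}}$ by a density-and-continuity argument; the price is a bounded factor in the expansion constant when the real and imaginary parts are recombined, and this is exactly what forces a large power $N$ into the conclusion.

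\textbf{Step 1.} Fix the constants $\lambda_{0}>1$ and $0<\theta_{0}<1$ given by \ref{cfh:2}. By Lemma~\ref{lem:induction}\eqref{induction power}, for every rational $1$--chain $x\in N_{\theta_{0}}(V_{\rm h})^{\infty}$ and every $M\ge 1$ one has $\lambda_{0}^{M}\norm{A_{f,H}^{M}(x)}\le\max\{\norm{A_{f,H}^{2M}(x)},\norm{x}\}$. \textbf{Step 2.} Let $N_{\theta_{0}}^{\RR}\subseteq C^{(2)}_{1}(\tGamma,\tH)$ be the cone of \emph{real} $L^{2}$--$1$--chains making a strict $\theta_{0}$--small angle with some element of $V_{\rm h}$, and put $(N_{\theta_{0}}^{\RR})^{\infty}=\bigcup_{k\in\ZZ}A_{f,H}^{k}(N_{\theta_{0}}^{\RR})$. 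I would show every $\eta\in(N_{\theta_{0}}^{\RR})^{\infty}$ is an $L^{2}$--limit of rational $1$--chains in $N_{\theta_{0}}(V_{\rm h})^{\infty}$: writing $\eta\in A_{f,H}^{k}(N_{\theta_{0}}^{\RR})$ and approximating the relevant chain by finitely supported rational $1$--chains $x_{j}$ (truncate the support and round the coefficients), continuity of $A_{f,H}$ and of the inner product makes the defining strict inequality persist for large $j$ after applying $A_{f,H}^{k}$ (if $k\ge0$) or $A_{f,H}^{|k|}$ (if $k<0$), so $x_{j}\in A_{f,H}^{k}(N_{\theta_{0}}(V_{\rm h}))\subseteq N_{\theta_{0}}(V_{\rm h})^{\infty}$; here one uses that $A_{f,H}$ maps $C_{1}(\tGamma,\tH;\QQ)$ into itself. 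Since $A_{f,H}$ is bounded, $\norm{A_{f,H}^{m}(x_{j})}\to\norm{A_{f,H}^{m}(\eta)}$ for each $m$, so the estimate of Step 1 passes to the limit: $\lambda_{0}^{M}\norm{A_{f,H}^{M}(\eta)}\le\max\{\norm{A_{f,H}^{2M}(\eta)},\norm{\eta}\}$ for all $\eta\in(N_{\theta_{0}}^{\RR})^{\infty}$ and $M\ge1$.

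\textbf{Step 3.} Since $A_{f,H}$ has real matrix entries it commutes with complex conjugation, so $A_{f,H}^{m}(\Re\xi)=\Re(A_{f,H}^{m}\xi)$, $A_{f,H}^{m}(\Im\xi)=\Im(A_{f,H}^{m}\xi)$, and, a real $1$--chain being orthogonal to a purely imaginary one for the real part of the inner product, $\norm{A_{f,H}^{m}(\xi)}^{2}=\norm{A_{f,H}^{m}(\Re\xi)}^{2}+\norm{A_{f,H}^{m}(\Im\xi)}^{2}$ for all $m$. A glance at the definition of $N_{\theta}(V_{\rm h}^{(2)})$ shows that $\xi\in N_{\theta_{0}}(V_{\rm h}^{(2)})^{\infty}$ forces $\Re\xi,\Im\xi\in(N_{\theta_{0}}^{\RR})^{\infty}$. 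Choosing $N\ge1$ with $\lambda_{0}^{N}\ge2$, applying Step 2 with $M=N$ to $\Re\xi$ and $\Im\xi$, squaring, adding, and using $\max\{a,b\}\le a+b$ for $a,b\ge0$ gives $4\norm{A_{f,H}^{N}(\xi)}^{2}\le\lambda_{0}^{2N}\norm{A_{f,H}^{N}(\xi)}^{2}\le\norm{A_{f,H}^{2N}(\xi)}^{2}+\norm{\xi}^{2}\le2\max\{\norm{A_{f,H}^{2N}(\xi)}^{2},\norm{\xi}^{2}\}$, that is, $\sqrt{2}\,\norm{A_{f,H}^{N}(\xi)}\le\max\{\norm{A_{f,H}^{2N}(\xi)},\norm{\xi}\}$. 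Hence the proposition holds with $\theta=\theta_{0}$, $\lambda=\sqrt{2}$ and this $N$.

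I expect the main obstacle to be the $k<0$ part of Step 2: given $\eta$ with $A_{f,H}^{|k|}\eta$ in the cone, one needs rational approximants of $\eta$ whose images under the (possibly non-injective) operator $A_{f,H}^{|k|}$ land back inside $N_{\theta_{0}}(V_{\rm h})$. This works precisely because the cone in \ref{cfh:2} is defined by a \emph{strict} inequality, hence is an open condition stable under the perturbation $x_{j}\to\eta$, and because $A_{f,H}$ preserves rational $1$--chains; isolating this as a small lemma about $N^{\infty}$ and $L^{2}$--limits is probably the cleanest way to organize the section.
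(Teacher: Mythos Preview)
Your proof is correct and follows essentially the same strategy as the paper: invoke Lemma~\ref{lem:induction}\eqref{induction power} on rational $1$--chains, pass to real $L^{2}$--chains by density and continuity, and then use that $A_{f,H}$ is a real operator to handle complex chains via their real and imaginary parts. The two cosmetic differences are that the paper picks the dominant one of $\Re\xi,\Im\xi$ and bounds $\norm{A^{N}(\xi)}\le 2\norm{A^{N}(\xi_{1})}$ (yielding $\lambda=\lambda_{0}^{N}/2$), whereas you square and add both parts (yielding $\lambda=\lambda_{0}^{N}/\sqrt{2}$); and the paper shrinks the cone by setting $\theta=(1+\theta_{0})/2$ to buy room for the rational approximation, whereas you correctly observe that the \emph{strict} inequality defining $N_{\theta_{0}}(V_{\rm h})$ already makes the cone open, so $\theta=\theta_{0}$ suffices. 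Your exposition of the $k\ge0$ branch in Step~2 is a bit compressed---the ``relevant chain'' to approximate there is the preimage $\eta'$ with $\eta=A_{f,H}^{k}(\eta')$, and the approximants of $\eta$ are then $A_{f,H}^{k}(x_{j})$---but the argument is sound.
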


\begin{proof}
Let $\lambda_0 > 1$ and $0 < \theta_0 < 1$ be the constants from \ref{cfh:2} for rational 1--chains in $N_{\theta_0}(V_{\rm h})^\infty$.  Let $N \in \NN$ be such that $\lambda_0^N > 2$ and set $\lambda = \lambda_0^N/2$.  Set $\theta = \frac{1 + \theta_0}{2}$.  To simplify notation, we denote $A_{f,H}$ by $A$ in the proof.
  
By Lemma~\ref{lem:induction}~\eqref{induction power}, we have that for any rational 1--chain $x \in N_{\theta_0}(V_{\rm h})^{\infty}$:
\begin{equation*}
2\lambda\norm{A^N(x)} = \lambda_0^N\norm{A^N(x)} \leq \max\left\{\norm{A^{2N}(x)}, \norm{x}\right\}.
\end{equation*}
  
Now fix a chain $\xi \in N_\theta(V_{\rm h}^{(2)})^{\infty}$ and decompose it as $\xi = \xi_{1} + i\xi_{2}$ where $\xi_1 = \Re(\xi)$ and $\xi_2 = \Im(\xi)$.  As $A$ is a real operator, we have $\norm{A^N(\xi)}^2 = \norm{A^N(\xi_1)}^2 + \norm{A^N(\xi_2)}^2$.  Without loss of generality, we may assume that $\norm{A^N(\xi_{1})}^2 \geq \frac{1}{2}\norm{A^N(\xi)}^2$.  

Let $\epsilon > 0$.  There is a rational 1--chain $x \in N_{\theta_0}(V_{\rm h})^{\infty}$ such that each of:
\[2\lambda\norm{A^{N}(\xi_{1})-A^{N}(x)}, \norm{A^{2N}(\xi_{1}) - A^{2N}(x)}, \mbox{ and } \norm{\xi_{1} - x} \] is less than $\epsilon$.
Using the observation above, we find:
\begin{align*}
2\lambda\norm{A^{N}(\xi_{1})} &\leq 2\lambda\norm{A^{N}(x)} + \epsilon \\
&\leq \max\left\{\norm{A^{2N}(x)}, \norm{x}\right\} + \epsilon  \\
&\leq \max\left\{\norm{A^{2N}(\xi_1)}, \norm{\xi_1}\right\} + 2\epsilon. 
\end{align*}
As this holds for all $\epsilon > 0$, we have $2\lambda\norm{A^{N}(\xi_{1})} \leq \max\left\{\norm{A^{2N}(\xi_{1})}, \norm{\xi_1}\right\}$.  Therefore:
\begin{align*}
\lambda\norm{A^{N}(\xi)} & \leq  2\lambda\norm{A^{N}(\xi_{1})} \\
&\leq \max\left\{\norm{A^{2N}(\xi_{1})}, \norm{\xi_1}\right\} \\
&\leq \max\left\{\norm{A^{2N}(\xi)}, \norm{\xi}\right\}.
\end{align*}
The last inequality again uses the fact that $A$ is a real operator so 
that:
\begin{equation*}
\norm{A^{2N}(\xi_{1})}^{2} \leq \norm{A^{2N}(\xi_{1})}^{2} + \norm{A^{2N}(\xi_{2})}^{2} = \norm{A^{2N}(\xi)}^{2}.\qedhere
\end{equation*}
\end{proof}

The proof of Lemma~\ref{lem:induction} carries over to $L^2$--1--chains $\xi \in N_\theta(V_{\rm h}^{(2)})^{\infty}$ using Proposition~\ref{prop:complex growth} in place of \ref{cfh:2}.

\begin{lemma}\label{lem:induction complex}
Suppose that the homotopy equivalence $f \from \Gamma \to \Gamma$ satisfies the chain flare condition relative to the $f$--invariant graph $H \subset \Gamma$ and let $\lambda > 1$, $0 < \theta < 1$ and $N > 0$ be the constants from Proposition~\ref{prop:complex growth}.  The following statements hold.

\begin{enumerate}
\item\label{induction down complex} If $\xi \in N_\theta(V_{\rm h}^{(2)})^{\infty}$, $j \geq 1$ and $\lambda\norm{A^{jN}_{f,H}(\xi)} \leq \norm{A_{f,H}^{(j-1)N}(\xi)}$, then $\lambda^j\norm{A^{jN}_{f,H}(\xi)} \leq \norm{\xi}$.

\item\label{induction up complex} If $\xi \in N_\theta(V_{\rm h}^{(2)})^{\infty}$, $j \geq 1$ and $\lambda\norm{\xi} \leq \norm{A^N_{f,H}(\xi)}$, then $\lambda^j\norm{\xi} \leq \norm{A_{f,H}^{jN}(\xi)}$.
\end{enumerate}
\end{lemma}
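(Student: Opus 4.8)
The plan is to observe that Proposition~\ref{prop:complex growth} is, for $L^2$--$1$--chains lying in $N_\theta(V_{\rm h}^{(2)})^{\infty}$, exactly the analogue of \ref{cfh:2} with the operator $A_{f,H}$ replaced throughout by its $N$--th power $A_{f,H}^N$; once this is recognized, the two inductions in the proof of Lemma~\ref{lem:induction}~\eqref{induction down} and \eqref{induction up} transfer \emph{verbatim} under this substitution.

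First I would record the bookkeeping point that makes the substitution legitimate. As in the rational case, $N_\theta(V_{\rm h}^{(2)})^{\infty}$ is a union of $A_{f,H}$--translates of $N_\theta(V_{\rm h}^{(2)})$ and is therefore $A_{f,H}$--invariant, hence also $A_{f,H}^N$--invariant. Consequently, whenever $\xi \in N_\theta(V_{\rm h}^{(2)})^{\infty}$ we also have $A_{f,H}^{iN}(\xi) \in N_\theta(V_{\rm h}^{(2)})^{\infty}$ for every $i \geq 0$, which is what permits Proposition~\ref{prop:complex growth} to be applied repeatedly along the sequence $\xi,\ A_{f,H}^N(\xi),\ A_{f,H}^{2N}(\xi),\dots$.

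For \eqref{induction down complex} I would induct on $j$; the case $j = 1$ is tautological. For the inductive step, assume $j \geq 2$, that the statement holds for $j-1$, that $\xi \in N_\theta(V_{\rm h}^{(2)})^{\infty}$, and that $\lambda\norm{A_{f,H}^{jN}(\xi)} \leq \norm{A_{f,H}^{(j-1)N}(\xi)}$. Applying Proposition~\ref{prop:complex growth} to the chain $A_{f,H}^{(j-2)N}(\xi)$ yields
\[
\lambda\norm{A_{f,H}^{(j-1)N}(\xi)} \leq \max\left\{\norm{A_{f,H}^{jN}(\xi)},\ \norm{A_{f,H}^{(j-2)N}(\xi)}\right\}.
\]
Since $\lambda > 1$ and $\lambda\norm{A_{f,H}^{jN}(\xi)} \leq \norm{A_{f,H}^{(j-1)N}(\xi)}$, the maximum cannot be attained by the first term, so $\lambda\norm{A_{f,H}^{(j-1)N}(\xi)} \leq \norm{A_{f,H}^{(j-2)N}(\xi)}$; the inductive hypothesis then gives $\lambda^{j-1}\norm{A_{f,H}^{(j-1)N}(\xi)} \leq \norm{\xi}$, and multiplying the standing hypothesis through by $\lambda^{j-1}$ closes the induction. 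The proof of \eqref{induction up complex} is the mirror image: induct on $j$ with $j = 1$ tautological, apply Proposition~\ref{prop:complex growth} to $\xi$ to get $\lambda\norm{A_{f,H}^{N}(\xi)} \leq \max\{\norm{A_{f,H}^{2N}(\xi)}, \norm{\xi}\}$, deduce from $\lambda\norm{\xi} \leq \norm{A_{f,H}^{N}(\xi)}$ and $\lambda > 1$ that $\lambda\norm{A_{f,H}^{N}(\xi)} \leq \norm{A_{f,H}^{2N}(\xi)}$, then apply the inductive hypothesis to $A_{f,H}^{N}(\xi)$ to obtain $\lambda^{j-1}\norm{A_{f,H}^{N}(\xi)} \leq \norm{A_{f,H}^{jN}(\xi)}$, and finally chain these to get $\lambda^{j}\norm{\xi} \leq \lambda^{j-1}\norm{A_{f,H}^{N}(\xi)} \leq \norm{A_{f,H}^{jN}(\xi)}$.

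I do not expect a genuine obstacle: all the analytic content is already packaged in Proposition~\ref{prop:complex growth}, and the remaining ingredients are the $A_{f,H}$--invariance of $N_\theta(V_{\rm h}^{(2)})^{\infty}$ together with the elementary observation that for $\lambda > 1$ the inequalities $\lambda a \leq \max\{b,c\}$ and $\lambda b \leq a$ force $\lambda a \leq c$ (and its variant with the roles of $b$ and $c$ interchanged). The only mild care required is notational---keeping the exponents $jN$, $(j-1)N$, $(j-2)N$ aligned so that Proposition~\ref{prop:complex growth} is invoked at $A_{f,H}^{(j-2)N}(\xi)$ in \eqref{induction down complex} and at $\xi$ itself in \eqref{induction up complex}, rather than at a shifted chain.
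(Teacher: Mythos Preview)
Your proposal is correct and follows exactly the approach the paper indicates: the paper does not write out a separate proof but simply states that the argument of Lemma~\ref{lem:induction} carries over to $L^2$--$1$--chains using Proposition~\ref{prop:complex growth} in place of \ref{cfh:2}, which is precisely the substitution you carry out in detail. Your explicit verification of the $A_{f,H}$--invariance of $N_\theta(V_{\rm h}^{(2)})^{\infty}$ and your careful tracking of the exponents $jN$, $(j-1)N$, $(j-2)N$ are exactly what is needed to make the transfer rigorous.
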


We can now prove the main result of this section.

\begin{theorem}\label{th:intersection}
Suppose that the homotopy equivalence $f \from \Gamma \to \Gamma$ satisfies the chain flare condition relative to the $f$--invariant graph $H \subset \Gamma$. Then there is a constant $\lambda > 1$ such that for any $1 \leq \nu < \lambda$ we have 
\begin{equation*}
V^{(2)}_{\rm qf} = \frakE(A_{f,H},\nu) \cap \frakF(A_{f,H},\nu\inv).
\end{equation*}
\end{theorem}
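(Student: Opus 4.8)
The plan is to prove the two inclusions separately, writing $A = A_{f,H}$ throughout and letting $\lambda_1 > 1$, $0 < \theta < 1$ and $N \in \NN$ be the constants furnished by Proposition~\ref{prop:complex growth}; I will take $\lambda = \lambda_1^{1/N}$, i.e.\ prove the asserted equality for every $\nu$ with $1 \leq \nu < \lambda_1^{1/N}$. A preliminary observation is that $A$ restricts to an \emph{invertible} operator on $V_{\rm qf}^{(2)}$: a Nielsen $1$--chain $\rho$ generating $V_{\rm qf}$ satisfies $A(g\rho) = \Phi_f(g)\rho$ for all $g \in \FF$, so $g\rho \mapsto \Phi_f\inv(g)\rho$ extends $\QQ$--linearly to a two-sided inverse of $A$ on $V_{\rm qf}$; combining this with Theorem~\ref{th:quasi fixed dynamics} yields a constant $C$ with $C\inv\norm{\xi} \leq \norm{A^k\xi} \leq C\norm{\xi}$ for all $\xi \in V_{\rm qf}^{(2)}$ and all $k \in \ZZ$.

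For the inclusion $V_{\rm qf}^{(2)} \subseteq \frakE(A,\nu) \cap \frakF(A,\nu\inv)$, fix $\xi \in V_{\rm qf}^{(2)}$. The constant sequence $\xi_j = \xi$ has $\limsup_j \norm{A^j\xi_j}^{1/j} = 1 \leq \nu$, so $\xi \in \frakE(A,\nu)$; and the sequence $\xi_j = (A|_{V_{\rm qf}^{(2)}})^{-j}\xi \in V_{\rm qf}^{(2)}$ satisfies $A^j\xi_j = \xi$ and $\limsup_j \norm{\xi_j}^{1/j} = 1 \leq \nu$, so $\xi \in \frakF(A,\nu\inv)$ directly from Definition~\ref{def:subspaces}. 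Hence $\frakK_\nu := \frakE(A,\nu) \cap \frakF(A,\nu\inv)$, which is a closed $A$--invariant subspace, contains $V_{\rm qf}^{(2)}$.

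For the reverse inclusion, let $\xi \in \frakK_\nu$ and use \ref{cfh:1} to write $\xi = \xi_{\rm h} + \xi_{\rm qf}$ with $\xi_{\rm h} \in V_{\rm h}^{(2)}$, $\xi_{\rm qf} \in V_{\rm qf}^{(2)}$; since $V_{\rm qf}^{(2)} \subseteq \frakK_\nu$ and $\frakK_\nu$ is a subspace, $\xi_{\rm h} = \xi - \xi_{\rm qf} \in V_{\rm h}^{(2)} \cap \frakK_\nu$, so it suffices to prove $V_{\rm h}^{(2)} \cap \frakK_\nu \subseteq V_{\rm qf}^{(2)}$. Given $\eta$ in this intersection, the idea is that if $\eta \notin V_{\rm qf}^{(2)}$ then $\eta$ ``flares'' and cannot lie in $\frakK_\nu$. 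After a reduction (discussed below) one arranges $\eta$ to lie in the $A$--invariant set $N_\theta(V_{\rm h}^{(2)})^\infty$; then Proposition~\ref{prop:complex growth} gives $\lambda_1\norm{A^{kN}\eta} \leq \max\{\norm{A^{(k+1)N}\eta},\norm{A^{(k-1)N}\eta}\}$ for every $k$, and Lemma~\ref{lem:induction complex} upgrades this to the dichotomy that either $\norm{A^{jN}\eta} \geq \lambda_1^{\,j}\norm{\eta}$ for all $j \geq 0$, or $\norm{A^{-jN}\eta} \geq \lambda_1^{\,j}\norm{\eta}$ for all $j \geq 0$ (backward iterates being meaningful on $N_\theta(V_{\rm h}^{(2)})^\infty$, whose elements are honest pieces of $A$--orbits). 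Since $\nu^N < \lambda_1$, the first alternative is incompatible with $\eta \in \frakE(A,\nu)$ and the second with $\eta \in \frakF(A,\nu\inv)$, forcing $\eta = 0 \in V_{\rm qf}^{(2)}$; as both sides of the claimed identity are then nested closed submodules with $V_{\rm qf}^{(2)} \subseteq \frakK_\nu$, they coincide.

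The hard part is the reduction and the incompatibility assertion in the last paragraph: the Haagerup--Schultz subspaces $\frakE(A,\nu)$, $\frakF(A,\nu\inv)$ are defined via approximating sequences $(\xi_j)$ that need not be close to $\eta$ for small $j$ (a ``diagonal'' condition $\limsup_j\norm{A^j\xi_j}^{1/j}\le\nu$), whereas Proposition~\ref{prop:complex growth} and Lemma~\ref{lem:induction complex} are pointwise statements about the orbit of $\eta$ itself. Bridging this gap requires combining the \emph{openness} of $N_\theta(V_{\rm h}^{(2)})$ (its defining inequalities are strict), the $A$--invariance of $N_\theta(V_{\rm h}^{(2)})^\infty$, and the fact that $\frakE$ and $\frakF$ are \emph{closed} subspaces: one first moves $\eta$ (modulo $V_{\rm qf}^{(2)}$, using that $V_{\rm h}^{(2)}$ lies in the closure of $N_\theta(V_{\rm h}^{(2)})^\infty$) into an $A$--invariant open cone on which the flaring estimate holds with uniform constants, and then argues that a uniform geometric growth rate $\lambda_1^{1/N} > \nu$ along the orbit of such $\eta$ is incompatible with the rate-$\nu$ bounds built into the definitions of $\frakE(A,\nu)$ and $\frakF(A,\nu\inv)$; this is where the argument is most delicate.
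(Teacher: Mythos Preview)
Your outline has the right architecture (two inclusions; reduce to $V_{\rm h}^{(2)}\cap\frakK_\nu=\{0\}$; use flaring), and the first inclusion is fine. But the reverse inclusion is not actually proved: your final paragraph correctly names the difficulty and then stops. Two concrete problems with the sketched route:

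\begin{itemize}
\item Your dichotomy invokes ``$\norm{A^{-jN}\eta}$'', but $A=A_{f,H}=\pi_H^\perp\circ A_f|_{C_1(\tGamma,\tH;\QQ)}$ is \emph{not} invertible in general (you yourself only established invertibility on $V_{\rm qf}^{(2)}$). The set $N_\theta(V_{\rm h}^{(2)})^\infty$ is $A$--invariant but its elements are not ``honest pieces of $A$--orbits'' in the sense of admitting backward iterates, so the second alternative of your dichotomy is undefined.
\item Even the forward alternative $\norm{A^{jN}\eta}\geq\lambda_1^{\,j}\norm{\eta}$ does not by itself contradict $\eta\in\frakE(A,\nu)$: that membership is witnessed by a sequence $(\xi_j)$ with $\xi_j\to\eta$ and $\limsup\norm{A^j\xi_j}^{1/j}\leq\nu$, not by the constant sequence. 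You never explain how growth along the orbit of $\eta$ controls $\norm{A^j\xi_j}$ for approximants $\xi_j\neq\eta$.
\end{itemize}

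The paper's proof avoids both issues by applying the flaring inequality to the \emph{witnessing sequences} rather than to $\eta$. For $\xi\in\frakF(A,\nu\inv)\cap V_{\rm h}^{(2)}$ with witnesses $(\xi_j)$: since $A^j\xi_j\to\xi\in V_{\rm h}^{(2)}$, openness of $N_\theta(V_{\rm h}^{(2)})$ gives $A^j\xi_j\in N_\theta(V_{\rm h}^{(2)})$ for large $j$, hence $\xi_j\in N_\theta(V_{\rm h}^{(2)})^\infty$. One then argues, via Lemma~\ref{lem:induction complex}\eqref{induction down complex}, that if $\lambda_0\norm{A^{jN}\xi_{jN}}\leq\norm{A^{(j-1)N}\xi_{jN}}$ held for infinitely many $j$ then $\limsup\norm{\xi_{jN}}^{1/jN}\geq\lambda_0^{1/N}>\nu$, contradicting the witness bound; so by Proposition~\ref{prop:complex growth} the opposite inequality $\lambda_0\norm{A^{jN}\xi_{jN}}\leq\norm{A^{(j+1)N}\xi_{jN}}$ holds eventually, and passing to the limit yields $\lambda_0\norm{\xi}\leq\norm{A^N\xi}$. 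A symmetric argument for $\xi\in\frakE(A,\nu)\cap V_{\rm h}^{(2)}$ (now $\xi_j\to\xi$ puts $\xi_j$ itself in $N_\theta(V_{\rm h}^{(2)})$) yields $\lambda_0\norm{A^N\xi}\leq\norm{\xi}$. Combining gives $\lambda_0^2\norm{\xi}\leq\norm{\xi}$, hence $\xi=0$. No backward iterates are needed, and the connection to the diagonal definitions of $\frakE,\frakF$ is made precisely through the witness sequences.
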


\begin{proof}
Let $\lambda_0 > 1$, $0 < \theta < 1$ and $N > 0$ be the constants from Proposition~\ref{prop:complex growth} and set $\lambda = \lambda_0^{1/N}$.  Fix a number $1 \leq \nu < \lambda$.  To simplify notation we denote $A_{f,H}$ by $A$ in the proof.

We begin by showing that $V^{(2)}_{\rm qf} \subseteq \frakE(A_{f,H},\nu) \cap \frakF(A_{f,H},\nu\inv)$.  Suppose that $\xi \in V^{(2)}_{\rm qf}$.  By Theorem~\ref{th:quasi fixed dynamics}, there is a $C > 0$ such that $C\inv \norm{\xi} \leq \norm{A^j(\xi)} \leq  C\norm{\xi}$ for all $j \geq 0$.  Thus $\xi \in \frakE(A,\nu)$ for any $\nu \geq 1$ as witnessed by the constant sequence $\xi_j = \xi$.  Next, take a sequence $x_j \in \CC \otimes V_{\rm qf}$ so that $\lim_{j \to \infty} \norm{x_j - \xi} = 0$.  For each $j$, there is a complex 1--chain $y_j \in \CC \otimes V_{\rm qf}$ such that $A^j(y_j) = x_j$.  Indeed, writing $x_j = z_{1,j}g_{1,j}\rho + \cdots + z_{r_j,j}g_{r_j,j}\rho$, we observe that $y_j = z_{1,j}\Phi_f^{-j}(g_{1,j})\rho + \cdots + z_{r_j,j}\Phi_f^{-j}(g_{r_j,j})\rho$ satisfies $A^j(y_j) = x_j$.  Then $\lim_{j \to \infty}\norm{A^j(y_j) - \xi} = 0$ and $\norm{y_j} \leq C\norm{A^j(y_j)} \leq 2C\norm{\xi}$ for large enough $j$ showing that $\xi \in \frakF(A,\nu\inv)$ for any $\nu \geq 1$ as witnessed by the sequence $\xi_j = y_j$.  This shows that $V^{(2)}_{\rm qf} \subseteq \frakE(A,\nu) \cap \frakF(A,\nu\inv)$. 

We next demonstrate that $\frakE(A_{f,H},\nu) \cap \frakF(A_{f,H},\nu\inv) \cap V^{(2)}_{\rm h} = \{0\}$.  From this it follows that $V^{(2)}_{\rm qf} = \frakE(A,\nu) \cap \frakF(A,\nu\inv)$ as claimed since $L^2(\FF)^{n_H} = V_{\rm h}^{(2)} + V_{\rm qf}^{(2)}$.

To this end, we suppose that $\xi \in \frakF(A,\nu\inv) \cap V^{(2)}_{\rm h}$ as witnessed by a sequence $(\xi_{j}) \subset L^2(\FF)^{n_H}$.  In other words, $\lim_{j \to \infty} \norm{A^{j}(\xi_{j}) -\xi} = 0$ and $\limsup_{j \to \infty} \norm{\xi_{j}}^{1/j} \leq \nu$.  As $\xi \in V^{(2)}_{\rm h}$, there is a $J \geq 0$ such that $A^j(\xi_j) \in N_\theta(V_{\rm h}^{(2)})$ for $j  \geq J$.  Hence $\xi_j \in N_\theta(V^{(2)}_{\rm h})^\infty$ for $j \geq J$. Let $S \subseteq \NN$ be the subset of $j \geq J/N$ where:  
\begin{equation*}
\lambda_0\norm{A^{jN}(\xi_{jN})} \leq \norm{A^{(j-1)N}(\xi_{jN})}.
\end{equation*}
By Lemma~\ref{lem:induction complex}~\eqref{induction down complex}, for $j \in S$ we have $\lambda_0^j\norm{A^{jN}(\xi_{jN})} \leq \norm{\xi_{jN}}$.  If $S$ is an infinite set, then 
\begin{equation*}
\limsup_{j \to \infty} \norm{\xi_{jN}}^{1/jN} \geq \lambda_0^{1/N} = \lambda > \nu,
\end{equation*} which contradicts the choice of the sequence $(\xi_{j})$.  Hence for large enough $j$, by Proposition~\ref{prop:complex growth} we must have:
\[ \lambda_0\norm{A^{jN}(\xi_{jN})} \leq \norm{A_f^{(j+1)N}(\xi_{jN})}. \] Taking
the limit as $j \to \infty$ we find that $\lambda_0\norm{\xi} \leq \norm{A^{N}(\xi)}$.

Next, we suppose that $\xi \in \frakE(A,\nu) \cap V^{(2)}_{\rm h}$ as witnessed by a sequence $(\xi_{j}) \subset L^2(\FF)^{n_H}$.  In other words, $\lim_{j \to \infty} \norm{\xi_{j} - \xi} = 0$ and $\limsup_{j \to \infty} \norm{A^{j}(\xi_{j})}^{1/j} \leq \nu$.  As $A^N$ is a bounded operator, we also have that $\lim_{j \to \infty} \norm{A^N(\xi_j) - A^N(\xi)} = 0$ and $\limsup_{j \to \infty} \norm{A^{N+j}(\xi_j)}^{1/j} \leq \nu$.  As $\xi \in V^{(2)}_{\rm h}$, there is a $J \geq 0$ such that $\xi_j \in N_\theta(V^{(2)}_{\rm h})$ for $j \geq J$.  Hence $A^N(\xi_j) \in N_\theta(V^{(2)}_{\rm h})^\infty$ for $j \geq J$.  Let $S \subseteq \NN$ be the subset of $j \geq J/N$ where:  
\begin{equation*}
\lambda_0\norm{A^{N}(\xi_{jN})} \leq \norm{A^{2N}(\xi_{jN})}.
\end{equation*}
By Lemma~\ref{lem:induction complex}~\eqref{induction up complex}, for $j \in S$, we have $\lambda_0^{j}\norm{A^{N}(\xi_{jN})} \leq \norm{A^{(j+1)N}(\xi_{jN})}$.  If $S$ is an infinite set, then 
\begin{equation*}
\limsup_{j \to \infty} \norm{A^{N + jN}(\xi_{jN})}^{1/jN} \geq \lambda_0^{1/N} = \lambda > \nu,
\end{equation*} which contradicts the choice of the sequence $(\xi_{j})$.  Hence for large enough $j$, by Proposition~\ref{prop:complex growth} we must have:
\[ \lambda_0\norm{A^{N}(\xi_{jN})} \leq \norm{\xi_{jN}}. \] Taking
the limit as $j \to \infty$ we find that $\lambda_0\norm{A^{N}(\xi)} \leq \norm{\xi}$.

Now consider $\xi \in \frakE(A,\nu) \cap \frakF(A,\nu\inv) \cap V^{(2)}_{\rm h}$.  As $\xi \in \frakF(A,\nu\inv) \cap V^{(2)}_{\rm h}$ we have $\lambda_0\norm{\xi} \leq \norm{A^{N}(\xi)}$.  As $\xi \in \frakE(A,\nu) \cap V_{\rm h}^{(2)}$ we have $\lambda_0\norm{A^N(\xi)} \leq \norm{\xi}$.  Hence $\lambda_0^2 \norm{\xi} \leq \norm{\xi}$, which is impossible if $\xi \neq 0$ as $\lambda_0 > 1$.  
\end{proof}

\begin{remark}\label{rem:invariant}
If $C_1(\tGamma,\tH;\QQ) = V_{\rm h} \oplus V_{\rm qf}$ and the $\QQ[\FF]$--module $V_{\rm h}$ is $A_{f,H}$--invariant, then the conclusion of Theorem~\ref{th:intersection} holds under the weaker hypothesis of \ref{cfh:2} where we only insist that $x \in V_{\rm h}$.  We sketch the modifications necessary for the proof of Theorem~\ref{th:intersection}.  Clearly, we still have $V^{(2)}_{\rm qf} \subseteq \frakE(A_{f,H},\nu) \cap \frakF(A_{f,H},\nu\inv)$.  Take $\xi \in \frakF(A_{f,H},\nu) \cap V^{(2)}_{\rm h}$ as witnessed by a sequence $(\xi_{j}) \subset L^2(\FF)^{n_H}$.  Write $\xi_j = \xi_j^{\rm h} + \xi_j^{\rm qf}$ where $\xi_j^{\rm h} \in V^{(2)}_{\rm h}$ and $\xi_j^{\rm qf} \in V^{(2)}_{\rm qf}$.  As $A_{f,H}^j(\xi_j^{\rm h}) \in V^{(2)}_{\rm h}$ and $A^j_{f,H}(\xi_j^{\rm qf}) \in V_{\rm qf}^{(2)}$, we have $A^j_{f,H}(\xi_j^{\rm h}) \to \xi$ and $A^j_{f,H}(\xi_j^{\rm qf}) \to 0$.  Since $\norm{\xi_j^{\rm qf}} \leq C\norm{A^j_{f,H}(\xi_j^{\rm qf})}$, we have $\xi_j^{\rm qf} \to 0$ and hence: \begin{equation*}
\limsup_{j \to \infty} \norm{A^j_{f,H}(\xi_j^{\rm h})}^{1/j} = \limsup_{j \to \infty} \norm{A^j_{f,H}(\xi_j)}^{1/j}.
\end{equation*}
In other words, we can assume that the sequence witnessing $\xi \in \frakF(A_{f,H},\nu) \cap V^{(2)}_{\rm h}$ lies in $V_{\rm h}^{(2)}$.  Thus as long as \ref{cfh:2} holds for elements of $V^{(2)}_{\rm h}$ we can still conclude that $\lambda_0\norm{\xi} \leq \norm{A^N(\xi)}$.  

A similar statement is true for $\xi \in \frakE(A_{f,H},\nu) \cap V^{(2)}_{\rm h}$.  Hence we may weaken the hypothesis on \ref{cfh:2} to $x \in V_{\rm h}$.  The chain flare assumption is written with the subset $N_\theta(V_{\rm h})^\infty$ as it is not obvious how to find an invariant direct sum complement to $V_{\rm qf}$.    
\end{remark}


\section{The Restriction to the Quasi-Fixed Subspace}\label{sec:restriction}

The purpose of this section is two-fold.  Firstly, we will compute the determinant of the operator $I - L^k_{f,H}$ restricted to the subspace $W^{(2)}_{\rm qf} \subset L^2(G_\phi)^{n_H}$ corresponding to the quasi-fixed subspace $V^{(2)}_{\rm qf} \subset L^2(\FF)^{n_H}$.  This takes place in Section~\ref{subsec:induced quasi-fixed} (Theorem~\ref{th:determinant on quasi-fixed}).  To make the statement precise, we first recall the notion of induction of Hilbert--$G$--modules and morphisms which takes place in Section~\ref{subsec:induction}.  Secondly, we extend Theorem~\ref{th:intersection} to the operator $L_{f,H}$ showing the equality between $W^{(2)}_{\rm qf}$ and the intersection $\frakE(L_{f,H},\nu) \cap \frakF(L_{f,H},\nu\inv)$ for $\nu$ sufficiently close to and greater than 1 when $f$ satisfies the chain flare condition.


\subsection{Induction}\label{subsec:induction}

Let $G$ be a countable group and $H$ a subgroup of $G$.  By $\iota \from H \to G$ we denote the natural inclusion.  Given a Hilbert--$H$--module $U$, the Hilbert space completion of $\CC[G] \otimes_{\CC[H]} U$ is a Hilbert--$G$--module denoted $\iota_*U$.  A morphism $A \from U \to V$ of Hilbert--$H$--modules induces a morphism $\iota_* A \from \iota_* U \to \iota_* V$ in the obvious way.  For more details see~\cite[Section~1.1.5]{bk:Luck02}.  The main properties of induction we use with regards to the Fuglede--Kadison determinant and the Brown measure are recorded below.  

\begin{lemma}\label{lem:induction properties}
Let $\iota \from H \to G$ be an injective group homomorphism and let $A \from U \to V$ be a morphism of finite dimensional Hilbert--$H$--modules.  The following statements hold.
\begin{enumerate}
\item\label{ind det} $\detG(\iota_{*}A) = \detG[H](A)$.

\item\label{ind measure} $\mu_{\iota_*A} = \mu_{A}$.

\end{enumerate}
\end{lemma}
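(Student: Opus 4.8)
The plan is to verify both statements directly from the definitions of induction and the definitions of the Fuglede--Kadison determinant and Brown measure, reducing everything to the case of operators between copies of $L^2(H)$ and $L^2(G)$. First I would reduce to the case $U = V = L^2(H)^n$: since $A$ is a morphism of finite dimensional Hilbert--$H$--modules, one can choose $H$--equivariant isometric embeddings into $L^2(H)^n$ for suitable $n$, and both $\det$ and $\mu$ are compatible with passing to such ambient modules (using that $\iota_*$ takes the projection onto the image of $U$ to the projection onto the image of $\iota_*U$, and that $\iota_*$ commutes with orthogonal complements and direct sums). So we may assume $A$ is given by right multiplication by a matrix over $\CC[H]$, acting on $L^2(H)^n$, and $\iota_*A$ is the corresponding operator given by right multiplication by the same matrix, now viewed over $\CC[G]$, acting on $L^2(G)^n$.

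For \eqref{ind det}, I would compare spectral density functions. The operator $(\iota_*A)^*(\iota_*A)$ on $L^2(G)^n$ is again given by right multiplication by an element $b \in \Mat_n(\CC[H])$ (namely $b = a^*a$ where $a$ is the matrix of $A$), and similarly $A^*A$ is right multiplication by the same $b$ on $L^2(H)^n$. The key point is that for any element $b \in \Mat_n(\CC[H])$ the traces agree: $\Tr_G(\text{r.m. by }b \text{ on } L^2(G)^n) = \Tr_H(\text{r.m. by }b \text{ on } L^2(H)^n)$, because the trace only sees the coefficient of the identity element, which lies in $H$. Since the spectral family $E^{A^*A}_\lambda$ is obtained by applying a bounded Borel function of $A^*A$, and such functions of right-multiplication operators are again (limits of) right-multiplication operators with coefficients determined intrinsically by $b$ (independent of whether we work over $H$ or $G$ — this is the statement that the inclusion $\calN(H) \hookrightarrow \calN(G)$ is trace-preserving and normal), we get $F_{\iota_*A}(\lambda) = F_A(\lambda)$ for all $\lambda \geq 0$. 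Then $\detG(\iota_*A) = \exp\int_{0^+}^\infty \log(\lambda)\,dF_{\iota_*A} = \exp\int_{0^+}^\infty \log(\lambda)\,dF_A = \detG[H](A)$, with the convention that both sides are $0$ simultaneously when the integral diverges.

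For \eqref{ind measure}, I would invoke \eqref{ind det} together with the characterization of the Brown measure via Fuglede--Kadison determinants. By Theorem~\ref{th:brown}~\eqref{brown integral}, for every polynomial $p$ (or holomorphic $h$) one has $\log\detG(p(\iota_*A)) = \int_\CC \log|p(z)|\,d\mu_{\iota_*A}$ and likewise $\log\detG[H](p(A)) = \int_\CC \log|p(z)|\,d\mu_A$. Since $p(\iota_*A) = \iota_*(p(A))$, part \eqref{ind det} gives $\log\detG(p(\iota_*A)) = \log\detG[H](p(A))$, so $\int_\CC \log|p(z)|\,d\mu_{\iota_*A} = \int_\CC \log|p(z)|\,d\mu_A$ for all such $p$. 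In particular, taking $p(z) = z - w$ for $w \in \CC$ shows that the logarithmic potentials of $\mu_{\iota_*A}$ and $\mu_A$ coincide; since both measures are compactly supported (supported in the spectrum, which is bounded) probability-type measures of the same total mass $\dim_G(\iota_*U) = \dim_H(U)$, equality of logarithmic potentials forces $\mu_{\iota_*A} = \mu_A$ by uniqueness of the measure with given logarithmic potential (the distributional Laplacian of the potential recovers the measure up to the constant $\frac{1}{2\pi}$).

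\textbf{Main obstacle.} The genuinely delicate point is the reduction to right-multiplication operators and the assertion that taking bounded Borel functional calculus is ``the same'' over $\calN(H)$ and over $\calN(G)$ — i.e., that the trace-preserving normal inclusion $\calN(H) \hookrightarrow \calN(G)$ intertwines spectral projections and hence matches spectral density functions. This is standard von Neumann algebra bookkeeping (it is exactly what makes $\dim_H$ and $\dim_G$ compatible under induction, as in \cite[Section~1.1.5]{bk:Luck02}), but it is the step requiring the most care; everything downstream is formal manipulation of the determinant and of logarithmic potentials.
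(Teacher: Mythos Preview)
Your proposal is correct and, for item~\eqref{ind measure}, essentially the same as the paper's: both arguments use that $\log\detG[H](A - wI) = \log\detG(\iota_*A - wI)$ (from item~\eqref{ind det}) and then recover the Brown measure as $\frac{1}{2\pi}\nabla^2$ of this logarithmic potential --- the paper states this directly via the Riesz-measure definition of $\mu_A$ from \cite{col:Brown86}, while you route through the integral formula in Theorem~\ref{th:brown}\eqref{brown integral} before taking the Laplacian, which is a slightly longer path to the same identity. For item~\eqref{ind det}, the paper simply cites \cite[Theorem~3.14~(6)]{bk:Luck02}; your sketch of the spectral-density argument (via the trace-preserving normal inclusion $\calN(H)\hookrightarrow\calN(G)$) is exactly the content behind that reference, and your identification of this as the point requiring the most care is apt.
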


\begin{proof}
Item~\eqref{ind det} is~\cite[Theorem~3.14~(6)]{bk:Luck02}.

Item~\eqref{ind measure} follows from \eqref{ind det} as $\mu_A$ is the Riesz measure associated to $\frac{1}{2\pi}\nabla^2 \log \detG[H] (A - zI)$ and $\mu_{\iota_{*}A}$ is the Riesz measure associated to $\frac{1}{2\pi}\nabla^2 \log \detG (\iota_{*}A - zI)$~\cite{col:Brown86}.  By \eqref{ind det} these two functions are identical.
\end{proof}


\subsection{The induced quasi-fixed subspace}\label{subsec:induced quasi-fixed}

Let $f \from \Gamma \to \Gamma$ be a homotopy equivalence and let $H \subset \Gamma$ be an $f$--invariant subgraph.  We will use the set-up and notation from Section~\ref{subsec:compute}.  In particular, we have an presentation of $G_\phi$ as a semi-direct product $G_\phi \cong \FF \rtimes_{\Phi_f} \I{t}$.  The inclusion $\iota \from \FF \to G_\phi$ gives rise to an inclusion $V^{(2)}_{\rm qf} \subseteq L^2(\FF)^{n_H} \subseteq L^2(G_\phi)^{n_H}$ and we define $W^{(2)}_{\rm qf}$ to be the closure of $\bigoplus_{\ell \in \ZZ} t^\ell V^{(2)}_{\rm qf}$ in $L^2(G_\phi)^{n_H}$.  In other words, $W^{(2)}_{\rm qf} = \iota_* V^{(2)}_{\rm qf}$.  

\begin{proposition}\label{prop:W quasi-fixed}
If $W^{(2)}_{\rm qf}$ is nontrivial, then it is isomorphic to $L^2(G_\phi)$ as a Hilbert--$G_\phi$--module.
\end{proposition}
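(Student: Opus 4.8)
The plan is to show that $W^{(2)}_{\mathrm{qf}} = \iota_* V^{(2)}_{\mathrm{qf}}$ is isomorphic to $L^2(G_\phi)$ by exhibiting $V^{(2)}_{\mathrm{qf}}$ as a Hilbert--$\FF$--module isomorphic to $L^2(\FF)$ and then applying functoriality of induction. Since $\iota_* L^2(\FF) \cong L^2(G_\phi)$ and induction carries isomorphisms of Hilbert--$\FF$--modules to isomorphisms of Hilbert--$G_\phi$--modules, it suffices to prove: if $V^{(2)}_{\mathrm{qf}} \neq \{0\}$, then $V^{(2)}_{\mathrm{qf}} \cong L^2(\FF)$ as a Hilbert--$\FF$--module. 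Because $V_{\mathrm{qf}}$ is generated by a Nielsen 1--chain $\rho$ in the sense of \ref{cfh:3}, the $\QQ[\FF]$--module $V_{\mathrm{qf}}$ is a quotient of $\QQ[\FF]$ via $g \mapsto g\rho$; the content is to control the induced map $L^2(\FF) \to V^{(2)}_{\mathrm{qf}}$.

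First I would treat the non-geometric case. Here \ref{nnc:single} provides an edge $e_\rho$ with $\rho_{e_\rho} = \pm 1$ and $\rho_{g e_\rho} = 0$ for all $g \neq \id_\FF$. Consider the $\QQ[\FF]$--module homomorphism $\QQ[\FF] \to V_{\mathrm{qf}}$ sending $\sum q_g g \mapsto \sum q_g g\rho$; I claim it is an isomorphism. Surjectivity is immediate from \ref{cfh:3}. For injectivity, the estimate in Lemma~\ref{lem:nongeom estimate} (or more simply the observation that reading off the coefficient of $g e_\rho$ recovers $\pm q_g$) shows the kernel is trivial; moreover the two-sided bound $\sum q_j^2 \le \norm{\sum q_j g_j \rho}^2 \le B \sum q_j^2$ shows this $\QQ[\FF]$--module isomorphism is bi-Lipschitz for the $\ell^2$--norms. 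Tensoring with $L^2(\FF)$ over $\QQ[\FF]$ and completing, the induced map $L^2(\FF) \to V^{(2)}_{\mathrm{qf}}$ is an $\FF$--equivariant bounded operator that is bounded below, hence injective with closed image; since $V^{(2)}_{\mathrm{qf}}$ is by definition the closure of the image of $\QQ[\FF]\otimes\param$, the map is onto. Thus $V^{(2)}_{\mathrm{qf}} \cong L^2(\FF)$.

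For the geometric case I would run the same argument using the realization machinery of Section~\ref{subsec:geometric} instead. The map $x \mapsto R(x)$ together with Lemma~\ref{lem:realize qausi-fix} gives $\norm{x} = \norm{\delta_0 R(x)}$, and Lemma~\ref{lem:coboundary estimate} gives $B\inv \norm{R(x)} \le \norm{\delta_0 R(x)} \le B\norm{R(x)}$, while $\norm{R(x)}^2 = \sum q_j^2$. Chaining these, the $\QQ[\FF]$--module map $\QQ[\FF] \to V_{\mathrm{qf}}$, $\sum q_g g \mapsto \sum q_g g\rho$ (note $R$ may twist by $\mathrm{sign}$, but that is an $\ell^2$--isometric reindexing of the standard basis, hence harmless) satisfies $B^{-2}\sum q_g^2 \le \norm{\sum q_g g\rho}^2 \le B^2 \sum q_g^2$, so it is again a bi-Lipschitz $\QQ[\FF]$--module isomorphism onto $V_{\mathrm{qf}}$. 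As before this extends to an $\FF$--equivariant bounded-below surjection $L^2(\FF) \to V^{(2)}_{\mathrm{qf}}$, giving the isomorphism. Finally, combining the two cases: $W^{(2)}_{\mathrm{qf}} = \iota_* V^{(2)}_{\mathrm{qf}} \cong \iota_* L^2(\FF) \cong L^2(G_\phi)$, using that for $\iota\from \FF \to G_\phi$ one has $\iota_* L^2(\FF) \cong L^2(G_\phi)$ and that $\iota_*$ sends isomorphisms to isomorphisms (\cite[Section~1.1.5]{bk:Luck02}).

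The main obstacle I anticipate is purely bookkeeping rather than conceptual: one must be careful that the map $\QQ[\FF] \to V_{\mathrm{qf}}$ is genuinely well-defined and injective, i.e.\ that distinct group elements $g$ really do give $\QQ[\FF]$--linearly independent translates $g\rho$ — this is exactly what the lower bounds $\sum q_j^2 \le \norm{\sum q_j g_j \rho}^2$ (non-geometric, from \ref{nnc:single}) and $B^{-2}\sum q_j^2 \le \norm{\sum q_j g_j\rho}^2$ (geometric, from the realization map and the Novikov--Shubin lower bound for $\delta_0$ on the nonabelian group $\FF_0$) are designed to supply. Given those estimates are already established, turning the algebraic isomorphism into a Hilbert--$\FF$--module isomorphism is routine: a bounded, bounded-below, $\FF$--equivariant map with dense image is an isomorphism of Hilbert--$\FF$--modules.
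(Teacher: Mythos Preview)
Your proposal is correct and follows essentially the same approach as the paper: reduce to showing $V^{(2)}_{\rm qf}\cong L^2(\FF)$ via the $\QQ[\FF]$--module map $\sum q_g g\mapsto\sum q_g g\rho$, establish a two-sided $\ell^2$--bound using Lemma~\ref{lem:nongeom estimate} in the non-geometric case and Lemmas~\ref{lem:realize qausi-fix} and~\ref{lem:coboundary estimate} in the geometric case, then extend and apply induction. The only cosmetic difference is that in the geometric case the paper routes the estimate through the proof of Proposition~\ref{prop:quasi fixed dynamics geom} (with $k=0$) rather than chaining the two lemmas directly as you do, but the content is identical.
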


\begin{proof}
We will show that if $V^{(2)}_{\rm qf}$ is nontrivial then it is isomorphic to $L^2(\FF)$.  This implies the statement of the proposition as $L^2(G_\phi) =\iota_* L^2(\FF)$.  

Let $\rho = \pi_H^\perp([u,v]) \in C_1(\tGamma,\tH;\QQ)$ be the Nielsen 1--chain generating $V_{\rm qf}$.  We define an $\FF$--equivariant operator $O \from \QQ[\FF] \to V_{\rm qf}$.  Given $x = q_1g_1 + \cdots + q_rg_r \in \QQ[\FF]$, we set $O(x) = q_1g_1\rho + \cdots + q_rg_r\rho \in V_{\rm qf}$.  Clearly, this map is a $\QQ[\FF]$--module surjective homomorphism.  We will show that there is a constant $D \geq 1$ such that:
\begin{equation*}
D\inv\norm{x} \leq \norm{O(x)} \leq D\norm{x}
\end{equation*}
This shows that $O$ is injective and extends to a Hilbert--$\FF$--module isomorphism $O\from L^2(\FF) \to V^{(2)}_{\rm qf}$ as claimed.  To this end, there are two cases depending on whether $\rho$ is non-geometric or geometric.

\medskip \noindent \textbf{Case 1: $\boldsymbol{\rho}$ is non-geometric.}  Let $B \geq 1$ be the constant from Lemma~\ref{lem:nongeom estimate} and set $D = \sqrt{B}$.  Given $x = q_1g_1 + \cdots + q_rg_r \in \QQ[\FF]$ we have $\norm{x}^2 = \sum_{j=1}^r q_j^2$.  Applying the estimate in Lemma~\ref{lem:nongeom estimate} with $k = 0$ we find:
\begin{equation*}
\norm{x} \leq \norm{O(x)} \leq D\norm{x}.
\end{equation*}
This proves the proposition in the the non-geometric case.

\medskip \noindent \textbf{Case 2: $\boldsymbol{\rho}$ is geometric.}  We will make use of the realization map $R \from V_{\rm qf} \to C_c^0(T_\rho;\QQ)$ and the sign map $\sign\from \FF \to \{-1,1\}$.  Let $U \from \QQ[\FF] \to C_c^0(T_\rho;\QQ)$ be the $\QQ[\FF]$--module isomorphism defined by:
\begin{equation*}
U\left(q_1g_1 + \cdots + q_rg_r\right) = q_1\sign(g_1)\chi_{g_1} + \cdots + q_r\sign(g_r)\chi_{g_r}.
\end{equation*}  (Recall $\chi_g$ is the characteristic function of the set $\{g\} \subset \sfV(T_\rho)$.)  We observe that $\norm{U(x)} = \norm{x}$ and that $U = RO$.  Let $C$ be the constant from Proposition~\ref{prop:quasi fixed dynamics geom} and set $D = \sqrt{C}$.  As shown in the proof of Proposition~\ref{prop:quasi fixed dynamics geom} with $k = 0$, we have $\norm{O(x)} \leq D\norm{RO(x)} \leq D^2\norm{O(x)}$.  As $\norm{x} = \norm{U(x)} = \norm{RO(x)}$, this proves the proposition in this case.
\end{proof}

We recall the operator $L_{f,H} \from L^2(G_\phi)^{n_H} \to L^2(G_\phi)^{n_H}$ defined in Section~\ref{subsec:compute}.  This operator is easily expressed using the current set-up as follows.  Given $\xi \in L^2(G_\phi)^{n_H}$, we write $\xi = \sum_{\ell \in \ZZ} t^\ell \xi^{(\ell)}$ where $\xi^{(\ell)} \in L^2(\FF)^{n_H}$.  Then:
\begin{equation}\label{eq:L}
L_{f,H}(\xi) = \sum_{\ell \in \ZZ} t^{\ell+1}A_{f,H}(\xi^{(\ell)}).
\end{equation}    
In particular we see that $W^{(2)}_{\rm qf}$ is $L_{f,H}$--invariant as $V^{(2)}_{\rm qf}$ is $A_{f,H}$--invariant. 

\begin{theorem}\label{th:determinant on quasi-fixed}
Suppose that the homotopy equivalence $f \from \Gamma \to \Gamma$ satisfies the chain flare condition relative to the $f$--invariant graph $H \subset \Gamma$.  Then for any $k \geq 0$, we have
\begin{equation*}
\log \detG[G_\phi] \bigl(I - L_{f,H}^k\bigr)\big|_{W^{(2)}_{\rm qf}} = 0.
\end{equation*}
\end{theorem}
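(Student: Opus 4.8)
The plan is to reduce the computation on $W^{(2)}_{\rm qf}$ to a computation on the subgroup $\I{t} \subset G_\phi$. First I would recall from Proposition~\ref{prop:W quasi-fixed} that, when $W^{(2)}_{\rm qf}$ is nontrivial, it is isomorphic to $L^2(G_\phi)$ as a Hilbert--$G_\phi$--module; when it is trivial there is nothing to prove since the Fuglede--Kadison determinant of the zero operator is $1$. So assume $W^{(2)}_{\rm qf} \neq \{0\}$. The key structural input is that $A_{f,H}$ fixes the generating Nielsen $1$--chain $\rho$, i.e.\ $A_{f,H}(\rho) = \rho$. Combining this with the isomorphism $O \from L^2(\FF) \to V^{(2)}_{\rm qf}$ from the proof of Proposition~\ref{prop:W quasi-fixed} (which is $\FF$--equivariant), the restriction $A_{f,H}\big|_{V^{(2)}_{\rm qf}}$ is conjugate via $O$ to the operator on $L^2(\FF)$ given by $\xi \mapsto \Phi_f(\xi)$ (extending the fact that $A_{f,H}(g\rho) = \Phi_f(g)\rho$). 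Then using the explicit formula \eqref{eq:L} for $L_{f,H}$ together with $W^{(2)}_{\rm qf} = \iota_* V^{(2)}_{\rm qf}$, I would check that $L_{f,H}\big|_{W^{(2)}_{\rm qf}}$, transported via the induced isomorphism $\iota_* O \from L^2(G_\phi) \to W^{(2)}_{\rm qf}$, becomes the operator on $L^2(G_\phi)$ given by left-to-right: $\xi \mapsto t\cdot(\text{appropriate twist})$; more precisely it is the operator of right multiplication by $t$ on $L^2(G_\phi) = L^2(\I{t}) \rtimes \ldots$, i.e.\ it is induced from right multiplication by $t$ on $L^2(\I{t})$.

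Granting that identification, the computation proceeds as follows. By Lemma~\ref{lem:det isomorphism}, since $\iota_* O$ is an isomorphism and $I - L_{f,H}^k\big|_{W^{(2)}_{\rm qf}}$ is injective (as $L_{f,H}^k$ has spectrum on the unit circle away from... — more simply, because its Brown measure is supported on the unit circle, $I - L_{f,H}^k$ has trivial Brown mass at $0$, hence is injective), we have
\begin{equation*}
\detG[G_\phi]\bigl(I - L_{f,H}^k\bigr)\big|_{W^{(2)}_{\rm qf}} = \detG[G_\phi]\bigl((\iota_* O)\inv \bigl(I - L_{f,H}^k\big|_{W^{(2)}_{\rm qf}}\bigr)(\iota_* O)\bigr) = \detG[G_\phi]\bigl(I - \iota_*(B^k)\bigr),
\end{equation*}
where $B \from L^2(\I{t}) \to L^2(\I{t})$ is right multiplication by $t$. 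By Lemma~\ref{lem:induction properties}~\eqref{ind det}, $\detG[G_\phi]\bigl(I - \iota_*(B^k)\bigr) = \det_{\I{t}}\bigl(I - B^k\bigr)$. Finally, $I - B^k$ is right multiplication by $1 - t^k$ on $L^2(\ZZ)$, which under the Fourier isomorphism $L^2(\ZZ) \cong L^2(S^1)$ is multiplication by the function $z \mapsto 1 - z^k$; its Fuglede--Kadison determinant is $\exp\bigl(\int_{S^1} \log|1 - z^k|\, dz\bigr) = 1$ by the standard Mahler-measure (Jensen) computation for the polynomial $1 - z^k$, whose roots all lie on the unit circle. Taking the logarithm gives $0$.

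The main obstacle I expect is the bookkeeping in the second step: carefully verifying that $L_{f,H}\big|_{W^{(2)}_{\rm qf}}$ really is $\iota_*$ of right multiplication by $t$ on $L^2(\I{t})$, rather than some more complicated twisted operator. The subtlety is that $A_{f,H}$ is \emph{not} $\FF$--equivariant (it satisfies $A_{f,H}(g\xi) = \Phi_f(g)A_{f,H}(\xi)$), so the identification $W^{(2)}_{\rm qf} \cong \iota_* V^{(2)}_{\rm qf}$ and the action of $t$ interleave in a way one has to track precisely. Concretely, for $\xi \in L^2(\FF)^{n_H}$ one has $L_{f,H}(t^\ell \xi^{(\ell)}) = t^{\ell+1} A_{f,H}(\xi^{(\ell)})$; when $\xi^{(\ell)} \in V^{(2)}_{\rm qf}$ is written $\xi^{(\ell)} = O(\eta^{(\ell)})$ for $\eta^{(\ell)} \in L^2(\FF)$, we get $A_{f,H}(O(\eta^{(\ell)})) = O(\Phi_f(\eta^{(\ell)}))$, and one must recognize $\sum_\ell t^{\ell+1}\Phi_f(\eta^{(\ell)})$ as precisely $t \cdot \sum_\ell t^\ell \eta^{(\ell)}$ inside $L^2(G_\phi) = L^2(\FF \rtimes_{\Phi_f}\I{t})$ — which is exactly how the semidirect-product relation $t^{-1}g t = \Phi_f(g)$ manifests, so that right multiplication by $t$ on $L^2(G_\phi)$ restricts on each $\I{t}$--coset-layer to $\Phi_f$ composed with the shift. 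Once this identification is pinned down, the rest is the invocation of the three cited lemmas and the elementary Jensen computation, and everything else is routine.
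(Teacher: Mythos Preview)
Your proposal is correct and follows essentially the same route as the paper: both handle the trivial case by definition, then use the isomorphism $O$ from Proposition~\ref{prop:W quasi-fixed} together with the semidirect-product relation $gt = t\Phi_f(g)$ to identify $(I - L_{f,H}^k)\big|_{W^{(2)}_{\rm qf}}$ with (the induction of) right multiplication by $1 - t^k$ on $L^2(\I{t})$, and then conclude via Lemma~\ref{lem:det isomorphism} and Lemma~\ref{lem:induction properties}~\eqref{ind det}. The paper cites \cite[Example~3.22]{bk:Luck02} for $\det_{\I{t}}(1 - t^k) = 1$ whereas you compute it directly via Jensen, and note that in your final display you wrote ``$t \cdot \sum_\ell t^\ell \eta^{(\ell)}$'' (left multiplication) when you meant right multiplication---the surrounding text makes clear you have the right identification in mind.
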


\begin{proof}
If $W_{\rm qf}^{(2)} = \{0\}$ then the proposition holds as $\log \detG[G_\phi] (0) = 0$ by definition.  Thus we assume that $W^{(2)}_{\rm qf} \neq \{0\}$ and we let $O \from  L^2(G_\phi) \to W^{(2)}_{\rm qf}$ be the isomorphism from Proposition~\ref{prop:W quasi-fixed}.  

Let $P \from L^2(\I{t}) \to L^2(\I{t})$ be the morphism given by right multiplication by $1-t^k$.  We have $\log \detG[\I{t}](P) = 0$~\cite[Example~3.22]{bk:Luck02}.  Let $\iota'\from \I{t} \to G_\phi$ be the natural inclusion.  We observe that $I - L_{f,H}^k \big|_{W^{(2)}_{\rm qf}} = O (\iota'_{*}P) O\inv$.  Indeed, consider $x = t^\ell(q_1g_1\rho + \cdots + q_rg_r\rho) \in t^\ell V_{\rm qf}$.  Then using the relation $t^k\Phi^k_f(g) = gt^k$ for all $g \in \FF$ and $k \geq 0$ we find:
\begin{align*}
(I - L_{f,H}^k)(x) &= x - t^{\ell+k}A^k_{f,H}(q_1g_1\rho + \cdots + q_rg_r\rho) \\
&= x - t^{\ell+k}(q_1\Phi^k_f(g_1)\rho + \cdots + q_r\Phi^k_f(g_r)\rho) \\
&= x - t^\ell(q_1g_1\rho + \cdots + q_rg_r\rho)t^k \\
&= x(1-t^k).
\end{align*}
Hence by Lemma~\ref{lem:det isomorphism} and Lemma~\ref{lem:induction properties}~\eqref{ind det} we find:
\begin{equation*}
\log \detG[G_\phi] \bigl(I - L_{f,H}^k\bigr)\big|_{W^{(2)}_{\rm qf}} = \log \detG[G_\phi] (\iota'_{*}P) = \log \detG[\I{t}] (P) = 0.
\end{equation*}
as claimed.
\end{proof}


\subsection{Isolating the induced quasi-fixed subspace}\label{subsec:ind quasi-fixed}

Using the expression in~\eqref{eq:L} for $L_{f,H}$ we can extend Theorem~\ref{th:intersection} to the operator $L_{f,H}$.

\begin{theorem}\label{th:unit circle}
Suppose that the homotopy equivalence $f \from \Gamma \to \Gamma$ satisfies the chain flare condition relative to the $f$--invariant graph $H \subset \Gamma$. Then there is a constant $\lambda > 1$ such that for any $1 \leq \nu < \lambda$ we have 
\begin{equation*}
W^{(2)}_{\rm qf} = \frakE(L_{f,H},\nu) \cap \frakF(L_{f,H},\nu\inv).
\end{equation*}
\end{theorem}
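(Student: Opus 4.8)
The plan is to reduce Theorem~\ref{th:unit circle} to Theorem~\ref{th:intersection} using the $\ZZ$--grading of $L^2(G_\phi)^{n_H}$. Writing $\xi = \sum_{\ell\in\ZZ}t^\ell\xi^{(\ell)}$ with $\xi^{(\ell)}\in L^2(\FF)^{n_H}$ as in~\eqref{eq:L}, one has $L_{f,H}^k(\xi)=\sum_{\ell\in\ZZ}t^{\ell+k}A_{f,H}^k(\xi^{(\ell)})$, and since distinct powers of $t$ lie in orthogonal subspaces,
\[
\norm{L_{f,H}^k(\xi)}^2=\sum_{\ell\in\ZZ}\norm{A_{f,H}^k(\xi^{(\ell)})}^2 ;
\]
moreover each coordinate projection $\xi\mapsto\xi^{(\ell)}$ is norm non-increasing, hence continuous. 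I would use these two facts to show that the Haagerup--Schultz subspaces of $L_{f,H}$ are ``graded'': $\xi\in\frakE(L_{f,H},\nu)$ iff $\xi^{(\ell)}\in\frakE(A_{f,H},\nu)$ for all $\ell$, and $\xi\in\frakF(L_{f,H},\nu\inv)$ iff $\xi^{(\ell)}\in\frakF(A_{f,H},\nu\inv)$ for all $\ell$.

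For the forward implications: given a witnessing sequence $\xi_j\to\xi$ with $\limsup_j\norm{L_{f,H}^j\xi_j}^{1/j}\leq\nu$, fix $\ell$; the displayed identity yields $\norm{A_{f,H}^j\xi_j^{(\ell)}}\leq\norm{L_{f,H}^j\xi_j}$, while $\xi_j^{(\ell)}\to\xi^{(\ell)}$, so $(\xi_j^{(\ell)})_j$ witnesses $\xi^{(\ell)}\in\frakE(A_{f,H},\nu)$. For $\frakF$ one instead observes that the $t^m$--coordinate of $L_{f,H}^j\xi_j$ is $A_{f,H}^j(\xi_j^{(m-j)})$, which converges to $\xi^{(m)}$, while $\norm{\xi_j^{(m-j)}}\leq\norm{\xi_j}$, so $(\xi_j^{(m-j)})_j$ witnesses $\xi^{(m)}\in\frakF(A_{f,H},\nu\inv)$. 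The reverse implications follow by truncating $\xi$ to finitely many $t$--coordinates $\xi_{[m]}=\sum_{\abs{\ell}\leq m}t^\ell\xi^{(\ell)}$, assembling the per-coordinate witnessing sequences into a single one (in the $\frakF$ case replacing $t^\ell$ by $t^{\ell-j}$ at stage $j$ so that $L_{f,H}^j$ carries them back into the correct coordinates), estimating the norm of a finite sum by $\sqrt{2m+1}$ times a maximum, and passing to the limit in $m$ using that $\frakE(L_{f,H},\nu)$ and $\frakF(L_{f,H},\nu\inv)$ are closed.

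Granting the graded descriptions, let $\lambda>1$ be the constant of Theorem~\ref{th:intersection}. For $1\leq\nu<\lambda$ that theorem gives $\frakE(A_{f,H},\nu)\cap\frakF(A_{f,H},\nu\inv)=V^{(2)}_{\rm qf}$, so $\xi\in\frakE(L_{f,H},\nu)\cap\frakF(L_{f,H},\nu\inv)$ iff $\xi^{(\ell)}\in V^{(2)}_{\rm qf}$ for all $\ell$, i.e.\ iff $\xi$ lies in the closure of $\bigoplus_{\ell}t^\ell V^{(2)}_{\rm qf}$, which is $W^{(2)}_{\rm qf}$ by definition. (One could also avoid the reverse implications: the forward ones already give $\frakE(L_{f,H},\nu)\cap\frakF(L_{f,H},\nu\inv)\subseteq W^{(2)}_{\rm qf}$, and the opposite inclusion can be obtained directly from Theorem~\ref{th:quasi fixed dynamics} and the displayed norm identity, exactly as in the first paragraph of the proof of Theorem~\ref{th:intersection}.)

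I expect the only real obstacle to be the bookkeeping with the shift in $L_{f,H}$ while handling $\frakF$: one must track that the relevant coordinate $\xi_j^{(m-j)}$ of the witnessing sequence drifts with $j$, and verify that the crude bound $\norm{\xi_j^{(m-j)}}\leq\norm{\xi_j}$ suffices to transport the exponential growth hypothesis to each coordinate. Everything else is routine once Theorems~\ref{th:intersection} and~\ref{th:quasi fixed dynamics} are in hand.
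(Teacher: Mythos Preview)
Your proposal is correct and follows essentially the same approach as the paper: take the constant $\lambda$ from Theorem~\ref{th:intersection}, use the orthogonal $t$--grading and the identity $\norm{L_{f,H}^k(\xi)}^2=\sum_\ell\norm{A_{f,H}^k(\xi^{(\ell)})}^2$ to show that membership of $\xi$ in $\frakE(L_{f,H},\nu)$ (resp.\ $\frakF(L_{f,H},\nu^{-1})$) forces each coordinate $\xi^{(\ell)}$ into $\frakE(A_{f,H},\nu)$ (resp.\ $\frakF(A_{f,H},\nu^{-1})$), and then invoke Theorem~\ref{th:intersection} coordinatewise; the inclusion $W^{(2)}_{\rm qf}\subseteq\frakE\cap\frakF$ is handled directly via Theorem~\ref{th:quasi fixed dynamics}. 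Your bookkeeping for the $\frakF$ case---tracking the drifting coordinate $\xi_j^{(m-j)}$---is exactly the point, and your parenthetical remark that the reverse graded implications are unnecessary is also how the paper proceeds.
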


\begin{proof}
Let $\lambda$ be the constant from Theorem~\ref{th:intersection}.  Fix a number $1 \leq \nu < \lambda$.  To simplify notation we denote $L_{f,H}$ by $L$ and $A_{f,H}$ by $A$ in the proof.

The proof that $W^{(2)}_{\rm qf} \subseteq \frakE(L,\nu) \cap \frakF(L,\nu\inv)$ is similar to the proof of $V^{(2)}_{\rm qf} \subseteq \frakE(A,\nu\inv) \cap \frakF(A,\nu)$ in Theorem~\ref{th:intersection}.  Indeed, by~\eqref{eq:L}, it is apparent that $C\inv\norm{\xi} \leq \norm{L^j(\xi)} \leq C\norm{\xi}$ for the same constant $C \geq 1$ and for all $j \geq 0$.
 
Now suppose that $\xi \in \frakF(L,\nu\inv)$ as witnessed by the sequence $(\xi_{j}) \subset L^2(G_\phi)^{n_H}$.  In other words, $\lim_{j \to \infty} \norm{L^j(\xi_j) - \xi} = 0$ and $\lim_{j \to \infty} \norm{\xi_j}^{1/j} \leq \nu$.  Using the decompositions $\xi = \sum_{\ell \in \ZZ} t^{\ell}\xi^{(\ell)}$ and $\xi_{j} = \sum_{\ell \in \ZZ} t^{\ell}\xi_{j}^{(\ell)}$ for each $j$, we have that $L^j(\xi_j) = \sum_{\ell \in \ZZ} t^{\ell+j}A^j(\xi_j^{(\ell)})$.  Hence $\lim_{j \to \infty} \norm{A^j(\xi_{j}^{(\ell)}) - \xi^{\ell}} = 0$ in $L^2(\FF)^{n_H}$ and \begin{equation*}
\limsup_{j \to \infty} \norm{\xi^{(\ell)}_{j}}^{1/j} \leq \limsup_{j \to \infty} \norm{\xi_{j}}^{1/j} \leq \nu
\end{equation*} 
so that $\xi^{(\ell)} \in \frakF(A,\nu\inv)$ for each $\ell$.

Similarly, if $\xi \in \frakE(L,\nu)$ then writing $\xi = \sum_{\ell \in \ZZ} t^{\ell}\xi^{(\ell)}$ we have that $\xi^{(\ell)} \in \frakE(A,\nu)$ as well for each $\ell \in \ZZ$.  

Hence if $\xi \in \frakE(L,\nu) \cap \frakF(L,\nu\inv)$ then writing $\xi = \sum_{\ell \in \ZZ} t^\ell\xi^{(\ell)}$, for each $\ell \in \ZZ$ we have $\xi^{(\ell)} \in V^{(2)}_{\rm qf}$ by Theorem~\ref{th:intersection} and so $\xi \in W^{(2)}_{\rm qf}$ as desired.
\end{proof}


\section{Proof of Theorem~\ref{th:mahler}}\label{sec:integral}

In this section we prove Theorem~\ref{th:mahler}, the main theorem of the article.  It is restated below for convenience.

\begin{restate}{Theorem}{th:mahler}
Suppose that $f \from \Gamma \to \Gamma$ is a homotopy equivalence that respects the reduced filtration $\emptyset = \Gamma_0 \subset \Gamma_1 \subset \cdots \subset \Gamma_S = \Gamma$ and that $f \from \Gamma \to \Gamma$ represents the outer automorphism $\phi \in \Out(\FF)$.  If the restriction of $f$ to $\hGamma_s$ satisfies the chain flare condition relative to $\hGamma_s \cap \Gamma_{s-1}$ for each $s \in \PF(f)$, then
\begin{equation}\label{eq:mahler restate}
-\rho^{(2)}(G_\phi) = \sum_{s \in \PF(f)} \int_{1 < \abs{z}} \log \abs{z} d\mu_{L_{f,s}}.
\end{equation}
Moreover, each integral in~\eqref{eq:mahler restate} is positive and hence $-\rho^{(2)}(G_\phi) > 0$.
\end{restate}

\begin{proof}
Let $f \from \Gamma \to \Gamma$ be as in the statement of the theorem.  By Theorem~\ref{th:torsion powers}, for all $k \geq 1$ we have $\rho^{(2)}(G_{\phi^{k}}) = k\rho^{(2)}(G_{\phi})$.  According to ~\cite[Remark~4.8]{ar:Clay17-2}, for all $k \geq 1$, we have $t^{k}J_{1}(f^{k})_s = (tJ_{1}(f)_s)^{k}$.  This implies that $L_{f,s}^{k} = \iota_{*}L_{f^{k},s}$ where $\iota \from G_{\phi^{k}} \to G_{\phi}$ is the natural inclusion.  According to~\cite[Theorem~4.1]{col:Brown86} $\mu_{L_{f,s}^k}$ is the push-forward of $\mu_{L_{f,s}}$ under the map $z \mapsto z^k$.  Hence using Lemma~\ref{lem:induction properties}~\eqref{ind measure} we have:
\begin{equation*}
\int_{\abs{z} > 1} \log\abs{z} \, d\mu_{L_{f^k,s}} = \int_{\abs{z} > 1} \log\abs{z} \, d\mu_{L^k_{f,s}} =\int_{\abs{z} > 1} \log\abs{z}^k \, d\mu_{L_{f,s}} = k\int_{\abs{z} > 1} \log\abs{z} \, d\mu_{L_{f,s}}. 
\end{equation*}  
In other words, both sides of~\eqref{eq:mahler restate} scale upon replacing $f$ by a power and so we are free to assume that the image of every vertex is fixed by $f$ and the set-up in Section~\ref{subsec:compute} applies. 

Again applying Theorems~\ref{th:torsion powers} and \ref{th:det-splitting} and Lemma~\ref{lem:induction properties}~\eqref{ind det}, we see that the $L^{2}$--torsion $-\rho^{(2)}(G_\phi)$ can be expressed in the following way for any $k \geq 1$:
\begin{align*}
-\rho^{(2)}(G_{\phi}) &= -\frac{1}{k}\rho^{(2)}(G_{\phi^{k}}) = \frac{1}{k} \sum_{s \in \PF(f)}\log \detG[G_{\phi^{k}}](I - L_{f^{k},s}) \\
& = \frac{1}{k} \sum_{s \in \PF(f)} \log \detG[G_{\phi}] ( I - \iota_{*}L_{f^{k},s}) = \frac{1}{k} \sum_{s \in \PF(f)} \log \detG[G_{\phi}] ( I - L^{k}_{f,s}). 
\end{align*}
Thus, by Theorem~\ref{th:brown}~\eqref{brown integral}, using the function $h(z) = z^k$, we have for each $k \geq 1$ that:
\begin{equation*}
-\rho^{(2)}(G_\phi) = \frac{1}{k} \sum_{s \in \PF(f)} \int_\CC \log \abs{1-z^k} \, d\mu_{L_{f,s}}.
\end{equation*}  

We claim that for each $s \in \PF(f)$ that:
\begin{equation}\label{eq:limit integral}
\lim_{k \to \infty} \frac{1}{k} \int_\CC \log \abs{1-z^k} \, d\mu_{L_{f,s}} = \int_{1 < \abs{z}} \log \abs{z} \, d\mu_{L_{f,s}}.
\end{equation}
To verify~\eqref{eq:limit integral}, fix an index $s \in \PF(f)$.  Let $f_s$ denote the restriction of $f$ to the $f$--invariant connected subgraph $\hGamma_s \subseteq \Gamma$.  There is a corresponding free-by-cyclic subgroup $G_{\phi,s} \subseteq G_\phi$.  Let $H$ denote the $f_s$--invariant subgraph $\hGamma_s \cap \Gamma_{s-1} \subseteq \hGamma_s$.  For the natural inclusion $\iota'\from G_{\phi,s} \to G_\phi$, as $L_{f,s} = \iota'_{*}L_{f_s,H}$, we have $\mu_{L_{f,s}} = \mu_{L_{f_s,H}}$ by Lemma~\ref{lem:induction properties}~\eqref{ind measure}.  Hence it suffices to verify~\eqref{eq:limit integral} using the measure $\mu_{L_{f_s,H}}$.  To simplify notation, denote we denote the operator $L_{f_s,H}$ by $L$ for the remainder.   

Let $\lambda > 1$ be the constant from Theorem~\ref{th:unit circle} applied to the homotopy equivalence $f_s \from \hGamma_s \to \hGamma_s$ which satisfies the chain flare condition relative to the graph $H$ by assumption.  Fix $1 < \nu < \lambda$.  We decompose the integral along the circles $\abs{z} = \nu\inv$ and $\abs{z} = \nu$ as follows: 
\begin{multline*}
\frac{1}{k} \int_{\CC} \log \abs{ 1 - z^{k}} \, d\mu_{L} =\frac{1}{k} \int_{\abs{z} < \nu\inv} \log \abs{1 - z^{k}} \, d\mu_{L} +\frac{1}{k}\int_{\nu\inv \leq \abs{z} \leq \nu} \log \abs{1 - z^{k}} \, d\mu_{L} \\
+ \frac{1}{k} \int_{\nu < \abs{z}} \log \abs{1 - z^{k}} \, d\mu_{L}.
\end{multline*}
We treat these three integral separately.

By Theorems~\ref{th:subspaces}, \ref{th:determinant on quasi-fixed} and \ref{th:unit circle}, we have:  
\begin{equation*}
\int_{\nu\inv \leq \abs{z} \leq \nu} \log \abs{1-z^k} \, d \mu_L = \log \detG[G_{\phi,s}] (I - L^k)\Big|_{W^{(2)}_{\rm qf}} = 0.
\end{equation*}  

Notice that for all $z \in \CC$ with $\abs{z} \leq \nu\inv$, we have:
\begin{equation*}
\lim_{k \to \infty} \abs{1 - z^k}^{1/k} = 1 \quad \mbox{and} \quad
\abs{\log{\abs{1-z^k}}^{1/k}} \leq \abs{\log(1-\nu\inv)}.
\end{equation*}
As constant functions are $\mu_L$--measurable since $\mu_L(\CC) < \infty$ (Theorem~\ref{th:brown}~\eqref{brown measure}), by the Lebesgue dominated convergence theorem we find:
\begin{equation*}
\frac{1}{k}\int_{\abs{z} < \nu\inv} \log \abs{1 - z^{k}} \, d\mu_L = \int_{\abs{z} < \nu\inv} \log \abs{1 - z^{k}}^{1/k} \, d\mu_{L} \to 0, \mbox{ as } k \to \infty. 
\end{equation*}

Likewise, let $r = \max\{\norm{L},\nu\} + 1$ and thus $\mu_L(\{z \in \CC \mid \abs{z} > r \}) = 0$ by Theorem~\ref{th:brown}~\eqref{brown support}.  For all $z \in \CC$ with $\nu \leq \abs{z} \leq r$, we have: 
\begin{equation*}
\lim_{k \to \infty} \abs{1 - z^k}^{1/k} = \abs{z} \quad \mbox{and} \quad
\abs{\log{\abs{1-z^k}}^{1/k}} \leq \max\{\abs{\log(\nu - 1)},\log(1 + r)\}.
\end{equation*}
Hence, by the Lebesgue dominated convergence theorem again, we find
\begin{multline*}
\frac{1}{k}\int_{\nu < \abs{z}} \log \abs{1 - z^{k}} \, d\mu_L = \int_{\nu < \abs{z} < r} \log \abs{1 - z^{k}}^{1/k} \, d\mu_{L} \\
\to \, \int_{\nu < \abs{z} < r} \log\abs{z} \, d\mu_L = \int_{\nu < \abs{z}} \log\abs{z} \, d\mu_L \mbox{ as } k \to \infty. 
\end{multline*}

Hence, combining these three calculations we have
\begin{equation*}
\lim_{k \to \infty} \frac{1}{k}\int_\CC \log\abs{1-z^k}^{1/k} \, d\mu_L = \int_{\nu < \abs{z}} \log\abs{z} \, d\mu_L
\end{equation*}
for all $1 < \nu < \lambda$ and so \eqref{eq:limit integral} holds.  Thus
\begin{align*}
-\rho^{(2)}(G_\phi) & = \lim_{k \to \infty} \sum_{s \in \PF(f)} \frac{1}{k} \int_\CC \log\abs{1-z^k}^{1/k} \, d\mu_{L_{f,s}} = \sum_{s \in \PF(f)} \int_{1 < \abs{z}} \log\abs{z} \, d\mu_{L_{f,s}}
\end{align*}
as desired verifying~\eqref{eq:mahler restate}.

It remains to show that each of the integrals in \eqref{eq:mahler restate} is positive.  As the operator $L$ is given by right-multiplication by a matrix with coefficients in $\ZZ[G_{\phi,s}]$, we have $\log \detG[G_{\phi,s}] (L) \geq 0$ according to \cite[Theorem~13.3~(2)~and~Lemma~13.11~(4)]{bk:Luck02}.   Therefore, we find
\begin{equation*}
0 \leq \log \detG[G_{\phi,s}](L) = \int_\CC \log\abs{z} \, d\mu_L = \int_{\abs{z} < 1} \log\abs{z} \, d\mu_L + \int_{1 < \abs{z}} \log\abs{z} \, d\mu_L.
\end{equation*}
As $\log\abs{z} < 0$ when $\abs{z} < 1$ and $0 < \log\abs{z}$ when $1 < \abs{z}$, if 
\begin{equation*}
\int_{1 < \abs{z}} \log\abs{z} \, d\mu_L = 0
\end{equation*}
then $\mu_L(\{z \in \CC \mid \abs{z} \neq 1\}) = 0$.  However by Theorem~\ref{th:brown}~\eqref{brown measure} we have $\mu_L(\CC) = n_s \geq 2$ and by \cite[Main~Theorem~1.1]{ar:HS09}, Proposition~\ref{prop:W quasi-fixed} and Theorem~\ref{th:unit circle} we have $\mu_L(\{z \in \CC \mid \abs{z} = 1\}) = \dim_{G_{\phi,s}}(W^{(2)}_{\rm qf}) \leq 1$.  Hence, $\mu_L(\{z \in \CC \mid \abs{z} \neq 1\}) \neq 0$ showing that the integral is indeed positive.
\end{proof}


\section{Applying the chain flare condition}\label{sec:apply}

In this final section, we include some final remarks about the chain flare condition.  First, we explain how a CT representative $f \from \Gamma \to \Gamma$ can be used to find a natural candidate for the quasi-fixed submodule $V_{\rm qf}$ associated the homotopy equivalence $f_s \from \hGamma_s \to \hGamma_s$ that satisfies \ref{cfh:3}.  We will not give the complete definition of a CT (completely split relative train-track map), but only recall the properties we require as needed.  See the works by Feighn--Handel~\cite{ar:FH11} and Handel--Mosher~\cite{ar:HM20} for full details on CTs.  Second, we present an example for consideration in which the chain flare condition simplifies.  Lastly, we include some remarks about applying the techniques of this paper to ascending HNN-extensions over free groups.  


\subsection{A candidate for the quasi-fixed submodule}\label{subsec:candidate}

First, we recall the notion of Nielsen path from which the notion of Nielsen 1--chain is modeled.  A non-trivial edge-path $\gamma$ in $\Gamma$ is a \emph{Nielsen path} if $f(\gamma)$ is homotopic rel endpoints to $\gamma$.  In particular, the endpoints of a Nielsen path $\gamma$ are fixed.   We say the Nielsen path $\rho$ is \emph{closed} if the endpoints are the same.   In a CT, the endpoints of a Nielsen path are vertices~\cite[Definition~4.7~(4)]{ar:FH11}.  

Suppose that $f \from \Gamma \to \Gamma$ is a CT with respect to the filtration $\emptyset = \Gamma_0 \subset \Gamma_1 \subset \cdots \subset \Gamma_S$.  By definition, the filtration is reduced~\cite[Definition~4.7~(3)]{ar:FH11}.  Fix an index $s \in \PF(f)$ and let $f_s$ denote the restriction of $f$ to the connected subgraph $\hGamma_s$ and let $\FF_s$ denote the subgroup (well-defined up to conjugacy) determined by $\hGamma_s$.  Let $H$ denote the $f_s$--invariant subgraph $\hGamma_s \cap \Gamma_{s-1} \subset \hGamma_s$.  Up to reversal of orientation, there is at most one Nielsen path contained in $\hGamma_s$ that is not contained in $H$~\cite[Corollary~4.19~eg-(i)]{ar:FH11}.  

If there is no such Nielsen path, we set $V_{\rm qf} = \{0\}$.  Else, let $\gamma_s$ be the Nielsen path and let $* \in \sfV(\hGamma_s)$ be one the of the endpoints of $\gamma_s$.  We fix a lift $\tast \in \sfV(\tGamma'_s)$ of $\ast \in \sfV(\hGamma_s)$ and a lift $\tf_s \from \tGamma'_s \to \tGamma'_s$ of $f_s \from \hGamma_s \to \hGamma_s$ that fixes $\tast$.  There is a lift $\tgamma_s$ of $\gamma_s$ to $\tGamma'_s$ so that one of its endpoints is $\tast$; let $v \in \sfV(\tGamma'_s)$ be the other endpoint of $\tgamma_s$.  As $\gamma_s$ is fixed up to homotopy rel endpoints by $f_s$, we have $\tf_s(v) = v$.  We claim that $\rho = \pi_H^\perp([\tast,v]) \in C_1(\tGamma'_s;\tH;\QQ)$ is a Nielsen 1--chain.  To this end, there are two cases depending on whether or not $\gamma_s$ is closed.         

If the Nielsen path $\gamma_s$ is not closed, then there is an edge $e \in \sfE(\hGamma_s) - \sfE(H)$ is is crossed exactly once by $\gamma_s$~\cite[Fact~1.42~(1)]{ar:HM20}.  The lift of this edge to $\tGamma'_s$ contained in $\tgamma_s$ satisfies item \ref{nnc:single} and so $\rho$ is a non-geometric Nielsen--1--chain in this case.  (In this case, the stratum corresponding to $\hGamma_s$ is a termed a non-geometric stratum.)  We set $V_{\rm qf}$ to the $\QQ[\FF]$--submodule generated by $\rho$.

If $\gamma_s$ the Nielsen path is closed, then every edge in $\sfE(\hGamma_s) - \sfE(H)$ is crossed exactly twice by $\gamma_s$~\cite[Fact~1.42~(2)]{ar:HM20}.  Moreover, there exists a weak geometric model for $f_s \from \hGamma_s \to \hGamma_s$~\cite[Fact~2.3]{ar:HM20}.  This includes the following data~\cite[Definition~2.1]{ar:HM20}:  
\begin{enumerate}
\item a compact connected surface $\Sigma$ with negative Euler characteristic and non-empty boundary whose components are $\bd\Sigma = \bd_0 \Sigma \cup \cdots \cup \bd_m \Sigma$;
  
\item a 2--complex $Y$ that is the quotient of attaching $\Sigma$ to $\Gamma_{s-1}$ via given homotopically nontrivial maps $\bd_j \Sigma \to \Gamma_{s-1}$, $j = 1,\ldots,m$;

\item an embedding $\hGamma_s \hookrightarrow Y$ extending the embedding $\Gamma_{s-1} \hookrightarrow Y$ where $Y - (\hGamma_s \cup \bd_0 \Sigma)$ is an open 2--disc; and 

\item the boundary component $\bd_0 \Sigma$ is homotopic in $Y$ to $\gamma_s$.

\end{enumerate}
In this case, we claim that $\rho$ is a geometric Nielsen--1--chain.  We verify the three items in turn.

Firstly, suppose $g \in \FF_s$ is nontrivial and $\supp(\rho) \cap \supp(g\rho)$ is non-empty.  Let $\beta$ be the maximal subedge-path of the edge-path $\tgamma_s$ such that $g\beta \subseteq \tgamma_s$.  Suppose there are two edges in $\beta$ that lie in $\sfE(\tGamma'_s) - \sfE(\tH)$.  Taking an innermost pair, there is a subedge-path of $\beta$ of the form $e_1 \cdot \beta' \cdot e_2$ where $e_1,e_2 \in \sfE(\tGamma'_s) - \sfE(\tH)$ and $\beta'$ is a (possibly trivial) edge-path in $\tH$.  If $\beta'$ is trivial, the vertex corresponding to $\beta'$ has valence two as $\Sigma$ is a surface.  However, in a CT, we can always arrange that the only vertices of valence two in $\hGamma_s$ either lie in $\Gamma_{s-1}$ or else are an endpoint of a Nielsen path.  If $\beta'$ is non-trivial, then its image in $\Gamma_{s-1} \subset Y$ corresponds to one of the boundary components $\bd_j \Sigma$ for some $j = 1,\ldots,m$.  In this case, we see that the endpoints of $\beta'$ have the same image in $Y$ and the link of this vertex in $\Sigma$ is disconnected.  This is a contradiction as $\Sigma$ is a surface.  Therefore we see that $\supp(\rho) \cap \supp(g\rho)$ can contain at most one edge of $\sfE(\tGamma'_s) - \sfE(\tH)$, verifying \ref{gnc:single}. 

Next, as every edge in in $\sfE(\hGamma_s) - \sfE(H)$ is crossed exactly twice by $\gamma_s$, we see that \ref{gnc:pair} holds.

Lastly, using the language of Section~\ref{subsec:geometric}, let $T_0 \subseteq T_\rho$ be the component that contains $\id_{\FF_s}$.  The stabilizer of $T_0$ is the subgroup $\pi_1(\Sigma) \subseteq \FF_s$, which is well-defined up to conjugacy.  As this subgroup acts transitively on the vertices of $T_0$, we have that the elements of $\pi_1(\Sigma)$ corresponding to the vertices adjacent to $\id_{\FF_s}$ generate $\pi_1(\Sigma)$.  As $\Sigma$ has negative Euler characteristic, $\pi_1(\Sigma)$ is non-abelian and hence at least two of these elements do not commute, verify \ref{gnc:noncommuting}. 

Hence, $\rho$ is a geometric Nielsen--1--chain in this case.  (In this case, the stratum corresponding to $\hGamma_s$ is termed a geometric stratum.)  We set $V_{\rm qf}$ to the $\QQ[\FF]$--submodule generated by $\rho$.

As every outer automorphism $\phi \in \Out(\FF)$ has power that is represented by a CT~\cite[Theorem~4.28,~Lemma~4.42]{ar:FH11}, we see that up to replacing $\phi$ by an iterate, we have a natural choice for $V_{\rm qf}$ for which \ref{cfh:3} holds. 

\begin{remark}\label{rem:iwip}
In order to show that $-\rho^{(2)}(G_\phi) > 0$ for all $\phi \in \Out(\FF)$ that are fully irreducible, it suffices to assume that $H \subseteq \Gamma$ is a single vertex (hence can be ignored) and that $V_{\rm qf} = \{0\}$.  Indeed, if a CT map for $\phi$ has a Nielsen path, then the stable tree $T_\phi$ is geometric (as an $\RR$--tree)~\cite[Theorem~3.2]{un:BF-OuterLimits}.  Hence if CT maps for both $\phi$ and $\phi\inv$ contain Nielsen paths, then both $T_\phi$ and $T_{\phi\inv}$ are geometric.  According to \cite[Corollary~9.3]{ar:Guirardel05}, this implies that $\phi$ is induced by a pseudo-Anosov homeomorphism of a surface and hence $-\rho^{(2)}(G_\phi) > 0$ as it the volume of the corresponding mapping torus.  Therefore, we may assume that a CT map for $\phi$ or $\phi\inv$ does not have any Nielsen paths and since $G_\phi = G_{\phi\inv}$, it suffices to work with the corresponding outer automorphism.       
\end{remark}


\subsection{An example to consider}\label{subsec:example}

We formulated the chain flare condition in generality so that it could apply to a general free-by-cyclic group, where we represent the monodromy $\phi$ by a CT map.  However, there are many free-by-cyclic groups for which the chain flare condition has a simpler form.  For instance, consider the following automorphism of the free group of rank 3, $\FF= \I{x_1,x_2,x_3}$, given by:
\begin{equation*}
\Phi(x_1) = x_2, \quad \Phi(x_2) = x_3, \mbox{ and } \Phi(x_3) = x_1x_2.
\end{equation*}
The obvious topological representative of $\Phi$ on the 3--rose $f \from R_3 \to R_3$ is an irreducible train-track map with no Nielsen paths (same for $\Phi\inv$) and so one would set $H = \ast$ (the unique vertex of $R_3$),  $V_{\rm qf} = \{0\}$ and $V_{\rm h} = C_1(\widetilde{R}_3;\QQ)$.  In this case, $A_f \from C_1(\widetilde{R}_3;\QQ) \to C_1(\widetilde{R}_3,\QQ)$ is a vector space isomorphism and one may verify the chain flare condition for an appropriate power by exhibiting constants $\lambda > 1$ and $N \geq 1$ such that for any integral 1--chain $x \in C_1(\widetilde{R}_3;\ZZ)$ we have:
\begin{equation*}
\lambda \norm{x} \leq \max\left\{\norm{A_f^N(x)},\norm{A_f^{-N}(x)}\right\}.
\end{equation*} 


\subsection{Ascending HNN-extensions}\label{subsec:ascending}

An ascending HNN-extension of a free group $\FF$ is the group given by a presentation:
\begin{equation*}
\FF\ast_\Psi = \I{\FF, t \mid t\inv x t = \Phi(x) \mbox{ for } x \in \FF}
\end{equation*}
where $\Psi \from \FF \to \FF$ is an injective endomorphism.  Beyond being generalizations of free-by-cyclic groups, ascending HNN-extensions arise naturally in the study of free-by-cyclic groups and the study of injective endomorphisms has seen increased interest lately~\cite{ar:DKL17,ar:Mutanguha21,thesis:Reynolds11}.  

The discussion in Section~\ref{subsec:compute}---in particular Theorem~\ref{th:det-splitting}---holds for ascending HNN-extensions.  Where the similarity breaks down and further analysis is necessary is in Section~\ref{sec:restriction}.  The key distinction between the free-by-cyclic and ascending HNN-extension cases lies in the way that $L^2(\FF)$ sits inside $L^2(\FF \rtimes_{\Phi} \I{t})$ versus the way it sits inside $L^2(\FF\ast_\Psi)$ when $\Psi$ is non-surjective.  In both cases, the relevant Hilbert space is the closure of the direct sum of a number of copies of $L^2(\FF)$, indexed by coset representative of $\FF$ in the relevant group.  In the free-by-cyclic case, we have that the operator $L_{f,H}$ respects this direct sum decomposition as written in~\eqref{eq:L}, whereas this is not true in the ascending HNN-extension case when $\Psi$ is non-surjective.  This poses a problem in the proof of Theorem~\ref{th:unit circle}.  One can create examples for $n \geq 1$ of the form $\xi = t\inv \xi^{(1)} + g t\inv \xi^{(2)} \in L^2(\FF\ast_\Psi)$ where $\xi^{(1)},\xi^{(2)} \in L^2(\FF)$ and $g \in \FF - \Psi(\FF)$ such that $\norm{A_{f,H}^k(\xi^{(1)})}, \norm{A_{f,H}^k(\xi^{(2)})} \approx 2^k$ and yet $2^{-n}\norm{L_{f,H}(\xi)} \leq \norm{\xi}$.  See the schematic in Figure~\ref{fig:schematic} contrasting the two settings.

\begin{figure}[ht]
\centering
\begin{tikzpicture}
\fill (0,1) circle [radius=0.075] node[inner sep=0pt,label=right:{$tL^2(\FF)$}] (1) {};
\fill (0,0) circle [radius=0.075] node[inner sep=0pt,label=right:{$L^2(\FF)$}] (2) {};
\fill (0,-1) circle [radius=0.075] node[inner sep=0pt,label=right:{$t\inv L^2(\FF)$}] (3) {};
\draw[very thick] (1) -- (3);
\draw[thick,->] (-1,-0.5) -- (-1,0.5);
\node at (-1.5,0) {$L_{f}$};
\node at (0,-2.5) {$L^2(\FF \rtimes_\Phi \I{t})$};
\begin{scope}[xshift=2cm]
\fill (5,1) circle [radius=0.075] node[inner sep=0pt,label=above:{$tL^2(\FF)$}] (0) {};
\fill (3.5,0) circle [radius=0.075] node[inner sep=0pt,label={[shift={(-0.5,-0.1)}]$L^2(\FF)$}] (00) {};
\fill (2.15,-1) circle [radius=0.075] node[inner sep=0pt,label=below:{$t\inv L^2(\FF)$}] (000) {};
\fill (4,-1) circle [radius=0.075] node[inner sep=0pt,label=below:{$gt\inv L^2(\FF)$}] (001) {};
\fill (6.5,0) circle [radius=0.075] node[inner sep=0pt,label={[shift={(1,-0.1)}]$tgt\inv L^2(\FF)$}] (01) {};
\fill (6,-1) circle [radius=0.075] node[inner sep=0pt,label=below:{$t
gt^{-2} L^2(\FF)$}] (010) {};
\fill (7.85,-1) circle [radius=0.075] node[inner sep=0pt,label={[shift={(0.5,-0.7)}]$t gt\inv gt\inv L^2(\FF)$}] (011) {};
\draw[very thick] (00) -- (0) -- (01) (000) -- (00) -- (001) (010) -- (01) -- (011);
\draw[thick,->] (10,-0.5) -- (10,0.5);
\node at (10.5,0) {$L_{f}$};
\node at (5,-2.5) {$L^2(\FF \ast_\Psi)$};
\end{scope}
\end{tikzpicture}
\caption{Contrasting the free-by-cyclic setting with the ascending HNN-extension setting.}\label{fig:schematic}
\end{figure}
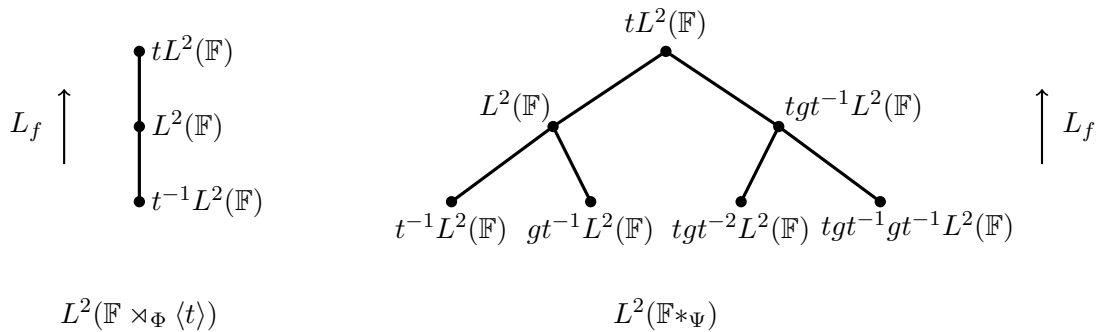


\bibliography{$HOME/Dropbox/bibliography}
\bibliographystyle{siam}

\end{document}